%% file: arXiv_v2.tex
\documentclass{article}

\usepackage{microtype}
\usepackage{graphicx}
\usepackage{subcaption}
\usepackage{booktabs} 
\usepackage{csquotes}
\usepackage{doi}
\usepackage{aligned-overset}
\usepackage{cancel}
\newcommand\figref{Figure~\ref}

\makeatletter
\newsavebox{\@brx}
\newcommand{\llangle}[1][]{\savebox{\@brx}{\(\m@th{#1\langle}\)}%
  \mathopen{\copy\@brx\kern-0.5\wd\@brx\usebox{\@brx}}}
\newcommand{\rrangle}[1][]{\savebox{\@brx}{\(\m@th{#1\rangle}\)}%
  \mathclose{\copy\@brx\kern-0.5\wd\@brx\usebox{\@brx}}}
\makeatother

\newcommand{\FundingLogos}{%
  \raisebox{0pt}{\includegraphics[height=1.5cm]{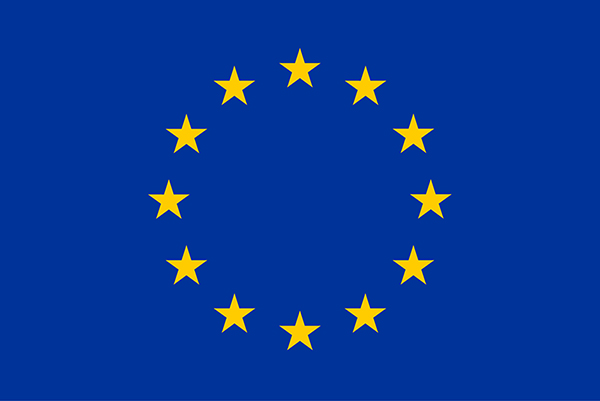}}%
  \hspace{1em}%
  \raisebox{0pt}{\includegraphics[height=1.5cm]{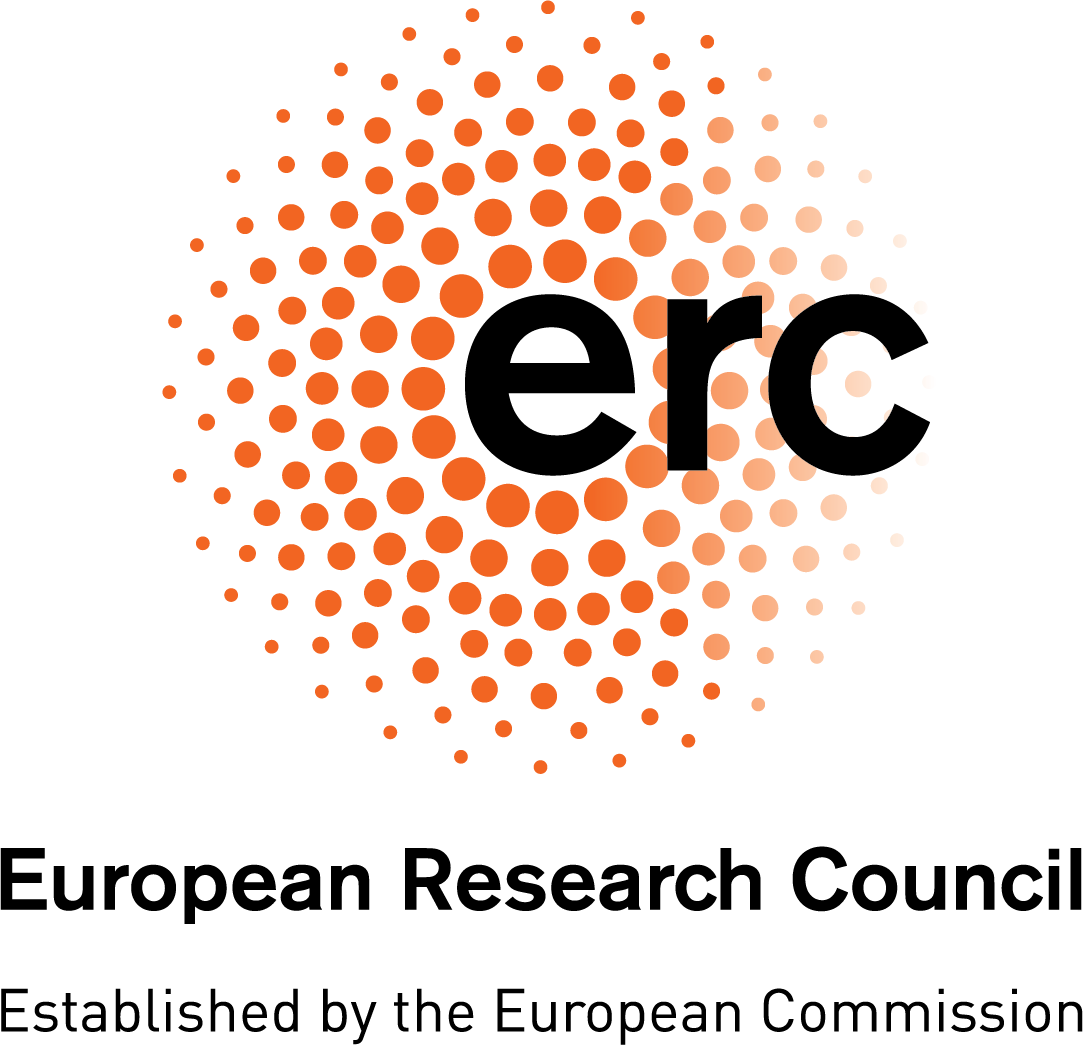}}%
}
\usepackage{tcolorbox}
\tcbset{colback=white,boxrule=0.5pt,arc=2pt,left=4pt,right=4pt,top=4pt,bottom=4pt}

\usepackage{hyperref}

\usepackage[preprint]{icml2026}

\usepackage{tikz} 
\usepackage{pgfplots} 
\pgfplotsset{compat=1.18}

\usepackage{amsmath}
\usepackage{amssymb}
\usepackage{mathtools}
\usepackage{amsthm}
\usepackage{thmtools,thm-restate}

\DeclareMathOperator{\R}{\mathbb R}
\DeclareMathOperator{\F}{\mathcal F}
\let\P\relax
\DeclareMathOperator{\P}{\mathcal P}
\renewcommand{\d}{\mathop{}\!\mathrm{d}}
\let\eps\varepsilon
\let\L\relax
\DeclareMathOperator{\L}{\mathcal L}
\usepackage{dsfont}
\DeclareMathOperator{\1}{\mathds{1}}
\DeclareMathOperator{\tT}{\mathsf{T}}
\DeclareMathOperator{\N}{\mathbb N}
\let\phi\varphi

\DeclareMathOperator{\E}{\mathbb E}

\DeclareMathOperator{\id}{id}

\DeclareMathOperator{\C}{\mathcal C}
\let\S\relax
\DeclareMathOperator{\S}{\mathcal S}

\let\eps\varepsilon
\DeclareMathOperator{\V}{\mathbb V}

\DeclareMathOperator{\NN}{\mathcal N}

\DeclareMathOperator{\FR}{FR} 
\let\O\relax
\DeclareMathOperator{\O}{O} 
\DeclareMathOperator{\KL}{KL}
\DeclareMathOperator{\Stein}{Stein}

\DeclareMathOperator{\m}{m}
\DeclareMathOperator{\Sym}{Sym}
\DeclareMathOperator{\KW}{KW} 
\DeclareMathOperator{\WKL}{WKL}
\DeclareMathOperator{\tr}{tr}
\DeclareMathOperator{\grad}{grad}
\DeclareMathOperator{\diag}{diag}

\let\div\relax
\DeclareMathOperator{\div}{div}

\DeclareMathOperator*{\argmin}{arg\,min}
\DeclareMathOperator{\Isom}{Isom}

\usepackage[capitalize,noabbrev]{cleveref}

\theoremstyle{plain}

\declaretheorem[name=Theorem,numberwithin=section]{theorem}
\declaretheorem[sibling=theorem]{lemma}
\declaretheorem[sibling=theorem]{proposition}
\declaretheorem[sibling=theorem]{corollary}
\theoremstyle{definition}
\declaretheorem[
  name=Definition,
  sibling=theorem,
  qed=$\diamond$
]{definition}

\theoremstyle{remark}
\newtheorem{remark}[theorem]{Remark}

\declaretheorem[
  name=Example,
  sibling=theorem,
  qed=$\bullet$
]{example}

\crefname{definition}{Definition}{Definitions}
\Crefname{definition}{Definition}{Definitions}

\usepackage[textsize=tiny]{todonotes}

\icmltitlerunning{Well-Posed KL-Regularized Control via (Kalman-)Wasserstein KL Divergences}

\begin{document}

\twocolumn[
  \icmltitle{Well-Posed KL-Regularized Control via Wasserstein and Kalman–Wasserstein KL Divergences}



  \icmlsetsymbol{equal}{*}

  \begin{icmlauthorlist}
    \icmlauthor{Viktor Stein}{yyy}
    \icmlauthor{Adwait Datar}{zzz}
    \icmlauthor{Nihat Ay}{zzz,xxx}
  \end{icmlauthorlist}

  \icmlaffiliation{yyy}{
  Department of Mathematics, 
  Technical University of Munich \& Munich Center for Machine Learning, Germany. The majority of the work was conducted while 
  at the Institute of Mathematics at the Technical University of Berlin, Germany \& the Berlin Mathematical School.}
  \icmlaffiliation{zzz}{Institute for Data Science Foundations, Hamburg University of Technology, Germany}
  \icmlaffiliation{xxx}{Santa Fe Institute, USA}

  \icmlcorrespondingauthor{Viktor Stein}{\href{mailto:viktor.stein@tum.de}{viktor.stein@tum.de}}

  \icmlkeywords{Information geometry, Kullback-Leibler divergence, Wasserstein geometry, Kalman-Wasserstein metric, LQR, regularization}

  \vskip 0.3in
]



\printAffiliationsAndNotice{}  

\allowdisplaybreaks     
\begin{abstract}
    Kullback-Leibler (KL) divergence regularization is widely used in reinforcement learning, but it becomes infinite under support mismatch and can degenerate in low-noise regimes.
    Using a unified information-geometric framework, we introduce KL analogs by replacing the Fisher–Rao geometry in the dynamical formulation of the KL with transport-based geometries, and derive closed-form expressions for common distribution families.
    Between elliptic distributions, these divergences remain finite for degenerating equal covariances and yield a geometric interpretation of regularization heuristics used in Kalman ensemble methods.
    We demonstrate the utility of these divergences in KL-regularized optimal control.
    In the fully tractable setting of linear time-invariant systems with Gaussian process noise, the classical KL reduces to a quadratic control penalty that becomes singular as process noise vanishes.
    Our variants remove this singularity and yield well-posed problems. On a double integrator and a cart-pole example, the resulting controls preserve nontrivial feedback and achieve better closed-loop performance. 
\end{abstract}

\section{Introduction}
\label{sec:intro}
The Kullback-Leibler (KL) divergence \cite{KL1951} between two probability measures $\mu, \nu \in \P(M)$,
\begin{equation} \label{eq:KL}
    \KL(\mu \mid \nu)
    \coloneqq
    \begin{cases}
        \int_{M} \ln\left(\frac{\d \mu}{\d \nu}\right) \d \mu, & \text{if } \mu \ll \nu, \\
        \infty, & \text{else}
    \end{cases}
\end{equation}
can be interpreted in a dynamic, geometric way: if $\mu \ll \nu$, then
{\color{black}\begin{equation} \label{eq:KL_dynamic}
\begin{aligned}
    \KL& (\mu \mid \nu)
    = \int_{0}^{1} t \| \dot{\gamma}(t) \|_{\gamma(t)}^2 \d{t},  \\
    & \text{where $\gamma$ is the Fisher-Rao dual geodesic} \\
    & \text{with respect to the mixture connection }
    \\ & \text{with } \gamma(0) = \nu \text{ and } \gamma(1) = \mu.
\end{aligned}
\end{equation}}%
\noindent{}Replacing the time weight $t$ in the integral by its average, $\frac{1}{2}$, results in a symmetric quantity, namely the squared geodesic distance induced by the Fisher--Rao metric.
If we replace the Fisher-Rao geometry with the Wasserstein-2 geometry, this geodesic distance becomes the well-known Wasserstein-2 distance $W_2$ from optimal transport \cite{V2003,V2008,BB2000}. 
This observation suggests a systematic way of constructing KL--type divergences: keep the 
formulation \eqref{eq:KL_dynamic}, but replace the underlying geometry.


The Fisher--Rao geometry is intrinsic to the statistical manifold, but it does not encode the geometry of the underlying state space $M$.
Consequently, KL reacts discontinuously to support mismatch: for instance,
$\KL(\delta_x \mid \delta_y)=\infty$ for all $x \neq y$, independently of the distance between $x$ and $y$.
By contrast, the Wasserstein distance satisfies
$W_2(\delta_x,\delta_y)=\|x-y\|_2$ and therefore retains information about the geometry of the sample space.

Motivated by this observation, we replace the Fisher--Rao geometry in \eqref{eq:KL_dynamic} by Wasserstein, linearized Wasserstein, Kalman--Wasserstein, and Stein (with bilinear kernel) geometries and study the resulting divergence functionals.
We refer to these functionals as \emph{state-space-aware KL divergences}.
They retain the asymmetric, action-based character of the KL divergence while incorporating geometric information from the state space.

KL regularization is ubiquitous in reinforcement learning and control.
It appears in linearly-solvable and path-integral control formulations \cite{T2006,K2005,theodorou2010learning}, in policy-search and trust-region methods \cite{peters2010relative,SLAJM2015}, and in recent optimal control and LQG/LQR-type formulations \cite{HOK2024}.
However, the classical KL divergence can become problematic in low-noise regimes.
In particular, when controlled dynamics are compared to passive dynamics with small Gaussian process noise, the KL penalty reduces to a quadratic control cost whose coefficient scales inversely with the noise covariance.
As the noise covariance vanishes, this penalty diverges, and the corresponding regularized control problem becomes singular.
Related low-variance instabilities have also been observed in KL-regularized reinforcement learning from demonstrations \cite{RLOGT2021}.

To isolate this pathology in a tractable setting, we study divergence-regularized control for LQRs with linear time-invariant dynamics and Gaussian process noise. In this regime, the classical KL penalty degenerates in the deterministic limit, while the Wasserstein KL and Kalman--Wasserstein KL penalties remain finite.
Thus, the proposed divergences yield nontrivial optimal controls across both high-noise and low-noise regimes.

Our goal is not to propose a complete deep RL algorithm, but to develop a geometric framework for KL-type divergences obtained by replacing the Fisher--Rao geometry with state-space-aware geometries. For structured distribution classes, this yields closed-form formulas and tractable control objectives compatible with commonly used Gaussian policy parameterizations in continuous-control RL \cite{SLAJM2015}.

\subsection{Prior Work and Contributions} \label{subsec:prior_work_contributions}

Our construction builds on the canonical contrast functions introduced in \cite{FA2021,AA2015}.
The Wasserstein version of this contrast function was studied in \cite{A2024}, and explicit formulas for Gaussian measures were derived in \cite{DA2025}.
The present paper extends this line of work by formulating a general construction, deriving explicit formulas for several geometries and distribution families.

The main contributions of this paper are as follows.
\begin{enumerate}
    \item For a geometry $G$, we introduce $G$-dual geodesics and define the associated divergence $D^G$ as a time-weighted action integral along these curves.
    The (reverse) KL divergence is recovered when $G$ is the Fisher--Rao geometry, while Wasserstein-type choices of $G$ yield state-space-aware KL analogs.

    \item We show that $G$-dual geodesics can be characterized as $G$-metric gradient flows of a potential energy.
    This links the proposed divergences to geometric optimization and to mean-field descriptions of neural-network dynamics \cite{CB2020,HL2026}.

    \item We derive explicit formulas for $D^G$ between elliptic distributions in the Wasserstein, linearized Wasserstein, and Kalman--Wasserstein geometries, and for centered Gaussians in the Stein geometry with a bilinear kernel.
    These formulas show precisely how transport-based geometries incorporate state-space information that is absent from the classical KL divergence.

    \item We study divergence-regularized control in the LQR setting with Gaussian process noise.
    In this setting, the classical KL regularizer becomes singular as the noise covariance vanishes, whereas the Wasserstein KL and Kalman--Wasserstein KL remain finite and lead to well-posed optimal control problems.

    \item We demonstrate on analytically tractable control examples that replacing KL regularization by the proposed state-space-aware divergences preserves key variational structure while avoiding the low-noise blow-up of the classical KL penalty.
\end{enumerate}

For a comprehensive list of the symbols and notations used, see the appendix.

\section{The Riemannian Structure of the Density Manifold} \label{sec:todo}
In this section, we review the infinite-dimensional geometry of the space of positive smooth probability densities \cite{L1988,BBM2016}.
We give examples such as the Fisher-Rao or Wasserstein metric, define a notion of geodesics, and show how these geodesics can be obtained by solving a gradient flow with respect to a linear functional.

The set of smooth positive probability densities on a compact $n$-dimensional Riemannian manifold $(M, \langle \cdot, \cdot \rangle, g)$ without boundaries equipped with its volume measure $\mu_g$,
\begin{align*}
    P_{+}^{\infty}(M)
    \coloneqq \bigg\{ \rho \in \C^{\infty}(M): & \ \rho > 0, 
    \int_{M} \rho \d{\mu_g} = 1 \bigg\},
\end{align*}
with the tangent space at $\rho \in P_+^{\infty}(M)$ being
\begin{align*}
    T_{\rho} P_+^{\infty}(M)
    & = \S_0^{\infty}(M) \\
    & \coloneqq \left\{ f \in \C^{\infty}(M), \int_{M} f \d \mu_g = 0 \right\},
\end{align*}
and the cotangent space being defined as
\begin{equation*}
    T_{\rho}^* P_+^{\infty}(M)
    \coloneqq \C^{\infty}(M)/\R,
\end{equation*}
is a smooth infinite-dimensional Fréchet manifold.

As in \cite{AS2025}, we construct Riemannian metrics on $P_+^{\infty}(M)$ using Onsager operators.

\begin{definition}[Onsager operator]
    An \emph{Onsager operator} \cite{LM2013} (or raising of indices / musical isomorphism) is specified by a collection of $L^2(M; \mu_g)$-selfadjoint and positive definite operators
    \begin{equation*}
        \phi_{\rho}^G \colon \C^{\infty}(M) / \R \to \S_0^{\infty}(M), \quad \rho \in P_+^{\infty}(M)
    \end{equation*}
    where $G$ denotes the geometry, e.g., Wasserstein.
\end{definition}

As in \cite{WL2022}, we can write a Riemannian metric on $P_+^{\infty}(M)$ associated to $\phi^{G}$ either in terms of tangent vectors $a, b \in \S_0^{\infty}(M)$ or in terms of covectors of equivalence classes $[f], [g] \in \C^{\infty}(M) / \R$.
They are related by $\phi_{\rho}^G([f]) = a$ and $\phi_{\rho}^G([g]) = b$.

\begin{definition}[Regular Riemannian metric on $P_+^{\infty}(M)$]
    The metric associated with an Onsager operator
    $\phi^{G}$ is
    \begin{align*}
        g_{\rho}^{G}(a, b)
        & \coloneqq \langle a, \left(\Psi_{\rho} \circ (\phi_{\rho}^G)^{-1}\right)(b) \rangle_{L^2(M;\mu_g)} \\
        & = \langle \phi_{\rho}^G([f]), \Psi_{\rho}([g]) \rangle_{L^2(M;\mu_g)} 
        \eqqcolon {\llangle} [f], [g] \rrangle_{\rho}^{G},
    \end{align*}
    where $\Psi_{\rho} \colon \C^{\infty}(M) / \R \to \C_0^{\infty}(M, \rho)$, $[f] \mapsto f - \E_{\rho}[f]$ and $C_0^{\infty}(M; \rho) \coloneqq \left\{ f \in C^{\infty}(M): \langle f, \rho \rangle_{L^2(M; \mu_g)} = 0 \right\}$.
    The \emph{density manifold} $P_{+}^{\infty}(M)$ equipped with this metric is denoted by $(P_{+}^{\infty}(M), g^G)$.
\end{definition}

We now detail examples of Riemannian metrics on $P_+^{\infty}(M)$.

\begin{example}[Fisher-Rao metric]
    Let $\rho \in P_+^{\infty}(M)$.
    The Fisher-Rao Onsager operator is
    \begin{equation*}
        \phi_{\rho}^{\FR} \colon \C^{\infty}(M) / \R \to \S_0^{\infty}(M), \quad
        [f] \mapsto (f - \E_{\rho}[f]) \rho.
    \end{equation*}
    Since the inverse of $\phi_{\rho}^{\FR}$ has a closed form, we can write 
    \begin{equation*}
        g_{\rho}^{\FR}(a, b)
        \coloneqq \int \frac{a b}{\rho} \d\mu_g.
    \end{equation*}
    The sectional curvature of $(P_+^{\infty}(M), g^{\FR})$ is $\equiv \frac{1}{4}$ \cite{F1991}.
\end{example}

\begin{example}[Otto's Wasserstein metric with non-linear mobility]
    The Otto isomorphism \cite{O2001} is given by the weighted Laplacian
    \begin{equation*}
        \phi_{\rho}^{\O} \colon \C^{\infty}(M) / \R \to \S_0^{\infty}(M), \
        [f] \mapsto - \div_{g}(\rho \grad(f)).
    \end{equation*}
    The associated cotangent inner product is given by (using integration by parts)
    \begin{equation*}
        {\llangle} [f], [g] \rrangle_{\rho}^{\O}
        \coloneqq\int_{M} \langle \grad(f), \grad(g) \rangle \rho \d{\mu_g}.
    \end{equation*}
    The curvature, Levi-Civita connection, and more of $(P_+^{\infty}(M), g^{\O})$ is extensively discussed in \cite{L2008}.

    For a so-called \emph{mobility function} $\m \colon [0, \infty) \to [0, \infty)$, modeling conductivity/permeability of the transport process,  the Wasserstein-2 metric with nonlinear mobility \cite{CLSS2010,DNS2009} has the Onsager operator
    \begin{equation} \label{eq:W2_nonlinear_mobility_Onsager_operator}
        \begin{aligned}
        \phi_{\rho}^{\O_{\m}} \colon \C^{\infty}(M) / \R 
        & \to \S_0^{\infty}(M), \\
        [f] & \mapsto - \div_{g}\left(\m(\rho) \grad(f)\right),
        \end{aligned}
    \end{equation}
    with associated cotangent inner product
    \begin{equation*}
        {\llangle} [f], [g] \rrangle_{\rho}^{\O_{\m}}
        \coloneqq\int_{M} \left\langle \grad(f), \m\left(\rho\right) \grad(g) \right\rangle \d{\mu_g}.
    \end{equation*}
    The Levi-Civita connection and the curvature of $(P_{+}^{\infty}, g^{\O_{\m}})$ for $M = \R$ has been investigated in \cite{L2025}.

    
    A non-scalar-valued mobility arises in the Kalman-Wasserstein metric $g_{\cdot}^{\KW}$ \cite{GHLS2020} with mobility $\m_{\lambda}(f) \coloneqq f \big(C(f) + \lambda \id_{TM}\big)$, where $C(f)$ is the Riemannian covariance\footnote{For it to be well-defined, we require that on $M$, the 2-Fréchet mean is unique \citep[Subsec.~2.2]{AP2024}, which holds, e.g., for Hadamard manifolds \cite{P2006}.} of the density $f$, see \citep[Defn.~3.2]{AP2024}, and $\lambda > 0$ is some regularization parameter.

    For the constant mobility $\m \equiv 1$ we recover a flat $\dot{H}^{-1}$ (\enquote{linearized Wasserstein} \cite{P2018}) structure $g_{\cdot}^{\dot{H}^{-1}}$.
\end{example}

\subsection{Metric Gradient Flows on the Density Manifold}

Consider a functional $E \colon P_+^{\infty}(M) \to \R$.
We now define the covector field $\delta E$ on $P_+^{\infty}(M)$, induced by the ambient Fréchet space $\C^{\infty}(M)$.

\begin{definition}[Linear functional derivative]
    The \emph{first variation} (or: linear functional derivative) of $E$, if it exists, is the one-form $\delta E \colon P_+^{\infty}(M) \to \C^{\infty}(M) / \R$ fulfilling
    \begin{gather*}
    \left\langle f_{\rho}, \sigma \right\rangle_{L^2(M,\mu_g)}
    =
    \left.\frac{\mathrm{d}}{\mathrm{d}t}
    E(\rho+t\sigma)\right|_{t=0},
    \quad [f_{\rho}] = \delta E(\rho).
\end{gather*}
    for all $\sigma \in \S_0^{\infty}(M)$ such that $\rho + t \sigma \in P_+^{\infty}(M)$ for all sufficiently small $t \in \R$. 
\end{definition}
For some examples of functional derivatives of common functionals, see \citep[Tab.~1]{SL2025}.

We can then conveniently express the Riemannian gradient as the tangent vector field $\grad_{\rho}^G(E) \coloneqq \phi_{\rho}(\delta E(\rho))$. 

In the following definition, we adopt a different sign convention than that of \cite{A2024}.
\begin{definition}[Metric gradient flow on $P_+^{\infty}(M)$]
    A curve $\rho \colon [0, \infty) \to P_+^{\infty}(M)$, $t \mapsto \rho_t$ is a \emph{$(P_+^{\infty}(M), G)$-gradient flow of a functional $E \colon P_+^{\infty}(M) \to \R$ starting at $\rho(0)$} if 
    \begin{equation*} 
        \dot\rho(t)
        = - \grad_{\rho(t)}^{G}(E)
        \quad t > 0.\qedhere
    \end{equation*}
\end{definition}

We now generalize \citep[Thm.~1]{A2024}.
\begin{proposition}[Metric gradients of the expectation functional] \label{prop:grad_expectation}
    For $f \in \C^{\infty}(M)$, we have
    \begin{equation*}
        \grad_{\rho}^{G} \E_{\cdot}[f]
        = \phi_{\rho}^{G}([f]).
    \end{equation*}
    These metrics are uniquely characterized by this property.
\end{proposition}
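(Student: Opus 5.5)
The plan is to compute the first variation of the linear functional $E(\rho) \coloneqq \E_{\rho}[f] = \int_M f \rho \d\mu_g$ directly and then apply the definition $\grad_{\rho}^G(E) = \phi_{\rho}^G(\delta E(\rho))$.

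\textbf{Existence part.} Fix $\rho \in P_+^{\infty}(M)$ and $\sigma \in \S_0^{\infty}(M)$ with $\rho + t\sigma \in P_+^{\infty}(M)$ for small $|t|$. By linearity of the integral,
\begin{equation*}
    \left.\frac{\d}{\d t} E(\rho + t\sigma)\right|_{t=0}
    = \int_M f \sigma \d\mu_g
    = \langle f, \sigma \rangle_{L^2(M;\mu_g)} .
\end{equation*}
Hence $f_\rho = f$ is a representative of $\delta E(\rho)$. The representative is only determined modulo constants, because $\int_M c\,\sigma \d\mu_g = 0$ for every $\sigma \in \S_0^{\infty}(M)$. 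It is also determined exactly up to constants: the admissible $\sigma$ span all of $\S_0^{\infty}(M)$, since $\rho$ is bounded below on the compact $M$, and the annihilator of $\S_0^\infty(M)$ consists of the constants. So $\delta E(\rho) = [f] \in \C^{\infty}(M)/\R$, independently of $\rho$. Applying the Onsager operator gives $\grad_{\rho}^{G}\E_{\cdot}[f] = \phi_{\rho}^{G}([f])$. This step is routine; the only care needed is the well-definedness on the quotient.

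\textbf{Uniqueness part.} Here ``uniquely characterized'' should mean the following. Suppose $g^G$ and $g^{G'}$ are two regular metrics such that, for every $f\in\C^\infty(M)$ and every $\rho$, the gradient of $\E_\cdot[f]$ is $\phi^{G'}_\rho([f])$ in both. Then the gradient of $\E_\cdot[f]$ computed in $g^G$ equals $\phi^{G}_\rho([f])$ by the first part, so $\phi^G_\rho([f]) = \phi^{G'}_\rho([f])$ for all $[f]$. Equivalently, knowing the gradients of all expectation functionals recovers the Onsager operator at every $\rho$.

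It remains to check that the operator determines the metric. By the definition of a regular Riemannian metric,
\begin{equation*}
    g_\rho^G(a,b) = \langle a, \Psi_\rho\circ(\phi^G_\rho)^{-1}(b)\rangle_{L^2(M;\mu_g)} ,
\end{equation*}
which depends only on $\phi_\rho^G$. Therefore $g^G = g^{G'}$.

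A cleaner intrinsic route is the Riesz characterization. A gradient is defined by $g_\rho^G(\grad_\rho E, b) = \langle \delta E(\rho), b\rangle$ for all $b \in \S_0^{\infty}(M)$. Since every covector $[f]$ arises as $\delta\E_\cdot[f]$, prescribing the gradients of all linear functionals prescribes the map $[f] \mapsto \grad_\rho$, that is, the index-raising isomorphism. The metric is then fixed by
\begin{equation*}
    g_\rho(\phi_\rho[f], b) = \langle f, b\rangle_{L^2(M;\mu_g)} .
\end{equation*}

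\textbf{Expected obstacle.} The hard step is making the uniqueness claim precise. Two things need checking. First, one must verify that the pairing $\langle \phi_\rho[f], \Psi_\rho[g]\rangle_{L^2}$ agrees with $\langle \phi_\rho[f], g\rangle_{L^2}$. This holds because $\phi_\rho[f]$ has zero mass, so subtracting the constant $\E_\rho[g]$ does not change the pairing. Second, one must confirm that the Riesz-type identification is consistent with the definition of $\grad^G$ given via $\phi^G$. Surjectivity of $\phi^G_\rho$ onto $\S_0^\infty(M)$, which holds by assumption in the definition of an Onsager operator, ensures that the metric is determined on all tangent vectors.
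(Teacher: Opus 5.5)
Your proof is correct and follows the paper's route. The paper records $\delta\,\E_{\cdot}[f](\rho) = [f]$, applies $\grad^G_\rho = \phi^G_\rho \circ \delta$, and defers uniqueness to Ay (2024, Prop.~3); your Riesz-type argument supplies that step, since the gradients of all expectation functionals fix the index-raising map $[f] \mapsto \phi^G_\rho([f])$, and by surjectivity this fixes $g_\rho$.

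The consistency check you leave open closes in one line. Since $b$ has zero mass, $g^G_\rho(b, \phi^G_\rho([f])) = \langle b, \Psi_\rho([f])\rangle_{L^2(M;\mu_g)} = \langle b, f\rangle_{L^2(M;\mu_g)}$. The $L^2$-self-adjointness of $\phi^G_\rho$ makes $g^G_\rho$ symmetric, so the same identity holds in the other slot.
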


\begin{proof}
    For $F(\rho) \coloneqq \E_{\rho}[f]$ we have $\delta F(\rho) = [f]$. 
    The uniqueness 
    follows exactly as in \citep[Prop.~3]{A2024}.
\end{proof}

\subsection{Dual Geodesics on the Density Manifold} \label{subsec:geodesics_dual_structure}

{\color{black}We now generalize the construction in \cite{FA2021}, and define, analogously to \citep[Eq.~(33)]{A2024} the \emph{$G$-parallel transport}.}
\begin{definition}[$G$-parallel transport]
    The \emph{$G$-parallel transport} from $\rho \in P_+^{\infty}(M)$ to $\nu \in P_+^{\infty}(M)$ is
    \begin{gather*}
        \Pi_{\rho, \nu}^{G}
        \colon T_{\rho} P_+^{\infty}(M) \to T_{\nu} P_+^{\infty}(M), \\
        a \mapsto (\phi_{\nu}^{G} \circ (\phi_{\rho}^{G})^{-1})(a).\qedhere
    \end{gather*}    
\end{definition}
{\color{black} Then, by construction of $g^{G}$ and $\Pi^{G}$, the resulting \emph{$G$ connection} $\nabla^{(G)}$ is dual to the mixture connection $\nabla^{(m)}$ \cite{AJVS2017} in the sense of \citep[Eq.~(29)]{A2024}.

The geodesic $\gamma$, which is $G$-dual with respect to $\nabla^{(m)}$, or simply, the \textit{$G$-dual geodesic} satisfying $\gamma(0) = \rho$ and $\dot{\gamma}(0) = a$ solves the equation}
\begin{equation}\label{eq:G-dual_geodesic}
    \dot{\gamma}(t)
    = \phi_{\gamma(t)}^{G}\left( (\phi_{\rho}^{G})^{-1}(a)\right)
\end{equation}
on its interval of existence.
We now make a precise connection between $G$-dual geodesics and the metric gradient flow of the expectation functional.
\begin{theorem}[Characterizing $G$-dual geodesics]
\label{thm:characterizing_geodesics}
Let $\rho \in P_+^{\infty}(M)$ and $a \in S_0^{\infty}(M)$.
    For any $f \in \C^{\infty}(M)$ with $a = \phi_\rho^{G}([f])$, the $G$-dual geodesic satisfying $\gamma(0) = \rho$ and $\dot{\gamma}(0) = a$ is the $G$-gradient flow of the \emph{potential energy}
    \begin{equation*}
        \F \colon P_+^{\infty}(M) \to \R, \qquad \rho \mapsto -\E_{\rho}[f].    
    \end{equation*}
\end{theorem}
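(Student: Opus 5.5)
The plan is to compare the two ODEs on $P_+^{\infty}(M)$ directly and then invoke uniqueness. First, the $G$-dual geodesic is by definition the solution of \eqref{eq:G-dual_geodesic} with $\gamma(0)=\rho$. Because $\phi_\rho^{G}\colon \C^{\infty}(M)/\R \to \S_0^{\infty}(M)$ is positive definite, it is injective. Hence from $a=\phi_\rho^{G}([f])$ we obtain $(\phi_\rho^{G})^{-1}(a)=[f]$: the covector is pinned down as a class, and the choice of representative $f$ within $[f]$ does not matter. The geodesic equation therefore becomes
\begin{equation*}
    \dot\gamma(t) = \phi_{\gamma(t)}^{G}([f]), \qquad \gamma(0)=\rho .
\end{equation*}

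Second, I compute the gradient of $\F$. Since $\F(\rho)=-\E_\rho[f]$, linearity of the first variation together with the computation in the proof of \cref{prop:grad_expectation} gives $\delta\F(\rho)=-[f]$. Because each $\phi^{G}_{\rho}$ is linear, \cref{prop:grad_expectation} then yields $\grad^{G}_{\rho}\F=-\phi_\rho^{G}([f])$. The $G$-gradient flow of $\F$ starting at $\rho$ is thus
\begin{equation*}
    \dot\rho(t) = -\grad^{G}_{\rho(t)}\F = \phi^{G}_{\rho(t)}([f]).
\end{equation*}
This is literally the same initial value problem as the one for $\gamma$.

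Third, I check the initial velocity: $\dot\gamma(0)=\phi^{G}_\rho([f])=a$, as required. The two curves solve the same initial value problem, so they coincide on the common interval of existence. The main obstacle is this identification step. In the Fréchet setting, uniqueness of solutions is not automatic. I would handle it by reading the statement as saying that the two curves satisfy the same defining equation, with the same well-posedness caveats under which the $G$-dual geodesic itself is defined ("on its interval of existence"). I would also remark that the sign convention of the gradient flow chosen above, opposite to \cite{A2024}, is precisely what produces the minus sign in $\F$.
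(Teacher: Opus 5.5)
Your proof is correct and is essentially the argument the paper intends. \cref{prop:grad_expectation} and linearity of $\phi^G_\rho$ give $-\grad^G_{\rho}\F = \phi^G_{\rho}([f])$, and injectivity of $\phi^G_\rho$ turns \eqref{eq:G-dual_geodesic} into the same initial value problem $\dot\gamma(t)=\phi^G_{\gamma(t)}([f])$, $\gamma(0)=\rho$. The appendix subsection reserved for this proof is in fact empty, and the paper assumes uniqueness only tacitly (through the definite article in ``the $G$-dual geodesic''), so your explicit caveat about existence and uniqueness in the Fréchet setting is more careful than the source.
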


\begin{proof}
    See \cref{subsec:gradient_flows_geodesics_proof}.
\end{proof}

\section{State-Space-Aware KL Divergences}

The main motivation for our canonical divergence is the geodesic projection property.
Say that we have a point $p$ which we want to project onto a manifold $M$ via a divergence $D$, that is, we search for $p^* \in \argmin_{q \in M} D(p \mid q)$.
It is natural to assume that the geodesic connecting $p$ with $p^*$ meets $M$ orthogonally. This property is satisfied for
(a) a Riemannian manifold where the divergence is given by half of the squared distance, the geodesic is given by the Levi-Civita connection, and the orthogonality is measured in terms of the Riemannian metric;
(b) the probability simplex, where the divergence is given by the KL divergence or the $\alpha$-divergence, and the geodesics are the mixture geodesic or the corresponding $\alpha$-geodesic, respectively, and orthogonality is measured in terms of the Fisher-Rao metric. In information geometry, the geodesic projection property singles out particular divergences, which were studied as canonical divergences in \cite{AA2015,FA2021} and are often asymmetric in their arguments. This projection property does not hold for many other commonly used divergences.
\begin{definition}[$G$-divergence]
    The $G$-divergence is
    \begin{equation} \label{eq:DG}
    \boxed{\begin{aligned}
        D^{G} \colon & P_{+}^{\infty}(M) \times P_+^{\infty}(M) \to [0, \infty], \\ 
        (\mu \mid \nu) \mapsto & \inf\bigg\{ \int_{0}^{1} t \; g_{\gamma(t)}^{G}\big( \dot{\gamma}(t), \dot{\gamma}(t)\big) \d{t}: \\ & \qquad \qquad \begin{array}{c} \gamma \colon [0,1] \to P_+^{\infty}(M) \\ \text{ is a $G$-dual geodesic,} \\ \gamma(0) = \mu, \gamma(1) = \nu \end{array} \bigg\}.
    \end{aligned}}\qedhere
    \end{equation}
\end{definition}

\begin{example}
    For $G = \FR$ we obtain the reverse KL divergence, see \cref{thm:FR}.
    For $G = \O$ we obtain the WKL divergence from \cite{DA2025}, see \cref{thm:WKL}.
\end{example}

The following properties justify calling $D^G$ a divergence.
\begin{theorem}[Properties of $D^G$] \label{thm:DG_prop}
    Let $\rho, \rho_1, \nu, \nu_1 \in P_+^{\infty}(M)$.
    \begin{enumerate}
        \item 
        We have $D^{G}(\rho \mid \nu) \ge 0$. 
        If there is a $G$-dual geodesic connecting $\rho$ and $\nu$, and it is unique or the infimum in \eqref{eq:DG} is attained, then $D^{G}(\rho \mid \nu) = 0$ if and only if $\rho = \nu$. However, $D^{G}(\rho \mid \nu)$ may take the value $+\infty$. 
        In general, $D^{G}$ is not symmetric and does not fulfill the triangle inequality.

        \item
        $D^{G}(\rho \otimes \rho_1 \mid \nu \otimes \nu_1) = D^{G}(\rho \mid \nu) + D^{G}(\rho_1 \mid \nu_1)$ for $G \in \{ \O, \KW \}$ and if $\rho, \rho_1, \nu, \nu_1$ are Gaussians fulfilling the assumptions of \cref{thm:WKL} and \cref{thm:KWKL}, respectively.

        \item 
        On the set of elliptic distributions, the WKL is lower semicontinuous.

        \item 
        $D^G$ is invariant under isometries of the underlying space applied to both input measures simultaneously if the Onsager operator is equivariant under the action of that isometry. 
    \end{enumerate}
\end{theorem}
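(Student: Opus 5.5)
I will prove the four items separately. Items 1 and 4 use only the definition \eqref{eq:DG} and the structure of $G$-dual geodesics \eqref{eq:G-dual_geodesic}. Items 2 and 3 rely on the closed-form Gaussian/elliptic formulas of \cref{thm:WKL} and \cref{thm:KWKL}, which are stated later in the paper and which I take as given.

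For item 1, nonnegativity is immediate. The integrand $t\, g^G_{\gamma(t)}(\dot\gamma(t),\dot\gamma(t))$ is nonnegative because $\phi^G_\rho$ is positive definite, so $g^G$ is positive semidefinite. If the admissible set is empty, the infimum is $+\infty$ by convention, which also shows that the value $+\infty$ can occur. For $\rho=\nu$, the constant curve has $a=0$ and solves \eqref{eq:G-dual_geodesic}, so $D^G(\rho\mid\rho)=0$. Conversely, suppose $D^G(\rho\mid\nu)=0$ and the infimum is attained by some $\gamma$ (under uniqueness it is attained trivially). Then $t\,\|\dot\gamma(t)\|^2_{\gamma(t)}=0$ for a.e.\ $t$. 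Since $t>0$ on $(0,1]$ and $g^G$ is positive definite, this forces $\dot\gamma\equiv 0$, hence $\nu=\gamma(1)=\gamma(0)=\rho$. For asymmetry and failure of the triangle inequality, a counterexample suffices. Using \cref{thm:FR}, $D^{\FR}$ is the (reverse) KL divergence, which is known to be asymmetric and to violate the triangle inequality; for instance, take Bernoulli-type densities, or Gaussians with different variances.

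For item 2, I will plug the product Gaussians $\NN(m\oplus m_1,\Sigma\oplus\Sigma_1)$ into the explicit WKL and KWKL formulas. The transport map between Gaussians, i.e.\ the matrix square-root expressions $\Sigma^{1/2}(\Sigma^{1/2}\Sigma'\Sigma^{1/2})^{-1/2}\Sigma^{1/2}$, respects block-diagonal structure. Traces and log-determinants of block-diagonal matrices split additively, and the mean terms $\|m-m'\|^2$ split as sums of squared norms. So the formula separates into the two summands. For KW I must additionally check the covariance mobility. The Kalman–Wasserstein preconditioner $C+\lambda I$ of a product is block diagonal, namely $(\Sigma+\lambda I)\oplus(\Sigma_1+\lambda I)$, so the weighted quadratic forms also split. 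This requires the assumptions of \cref{thm:KWKL}, e.g.\ commuting covariances, to hold blockwise, which they do.

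For item 3, I will write WKL on elliptic distributions as an explicit function of the parameters $(m,\Sigma)$ and of the generator. Lower semicontinuity then follows for two reasons. The function is continuous where the covariances are positive definite. On the boundary, where covariances degenerate, its value is either given by a continuous extension or equals $+\infty$, and both are consistent with a liminf bound. The main obstacle is handling sequences whose covariance degenerates while the limit is still finite. Here I will use that the mean term and the trace/log-det terms are each lower bounded by their limits, for instance via lower semicontinuity of $-\log\det$ and of the matrix square root. Finally, for item 4, let $T$ be an isometry and suppose $\phi^G_{T_\#\rho}([f\circ T^{-1}])=T_\#\phi^G_\rho([f])$. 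Then $\gamma$ is a $G$-dual geodesic from $\rho$ to $\nu$ if and only if $T_\#\gamma$ is one from $T_\#\rho$ to $T_\#\nu$. The metric satisfies $g^G_{T_\#\gamma}(T_\#\dot\gamma,T_\#\dot\gamma)=g^G_\gamma(\dot\gamma,\dot\gamma)$ because $T$ preserves $\mu_g$. Hence the two infima coincide.
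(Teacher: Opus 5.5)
Your arguments for items 1, 2 and 4 follow the paper's proof. For item 1 you use the constant curve with potential $[0]$ and the a.e.\ vanishing of $t\,g^G_{\gamma(t)}(\dot\gamma(t),\dot\gamma(t))$. For item 2 you use that every operation in \eqref{eq:D_main_formula} and \cref{thm:KWKL} (products, square roots, logarithms, pseudo-inverses, $\lambda I$) acts blockwise on block-diagonal covariances, and that simultaneous diagonalizability is inherited blockwise. For item 4 you push dual geodesics forward by $T$ and use $T^{-1}$ for the reverse inequality. One small point in item 4: invariance of $g^G$ needs more than $T_\#\mu_g=\mu_g$. You also need the assumed equivariance of $\phi^G$, which gives $(\phi^G_{T_\#\mu})^{-1}(T_\#b)=[g\circ T^{-1}]$, and the fact that the centering $\Psi$ commutes with $T$.

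Item 3 has a genuine gap. The paper does not argue directly: it invokes the Gaussian analysis in \citep[Subsec.~3.4]{DA2026} and notes that $\kappa_g$ only rescales the covariance term. Your direct route treats as obvious the one step that is not, namely continuity of \eqref{eq:D_main_formula} on positive definite covariances. The formula contains $(R-I)^{\dagger}$ and $\log(R)^{\perp}$, and each of these jumps when an eigenvalue of $R$ crosses $1$. Continuity survives because $R$ is symmetric positive definite and every term is a spectral function of $R$. Concretely, $Q+2\log(R)^{\perp}=h(R)$ with $h(r)=\frac{2r^2\ln r-r^2+1}{(r-1)^2}$ for $r\neq 1$ and $h(1)=2$. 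This $h$ is continuous on $(0,\infty)$ with $h\ge 1$, and $R$ depends continuously on $(\Sigma_0,\Sigma_1)$. Since elliptic distributions in the paper have $\Sigma\in\Sym_+(\R;d)$, this already gives continuity in the parameters, hence lower semicontinuity, and no boundary analysis is needed.

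If instead you include degenerate scatter matrices, your dichotomy ``continuous extension or $+\infty$'' is false. It holds when only one of the two matrices degenerates, but not when both do. Take $d=1$, $\Sigma_0=s_0$, $\Sigma_1=s_1$ with $s_0,s_1\to 0$ and $\sqrt{s_1/s_0}\to r\in[0,\infty)$. Then the covariance term tends to $0$ and the mean term tends to $\frac14 h(r)|\Delta m|^2$, with $h(0)=1$. These limits fill $[\frac14|\Delta m|^2,\infty)$, so there is no continuous extension at a pair of Diracs. Lower semicontinuity there depends entirely on which value is assigned: the equal-covariance limit $\frac12|\Delta m|^2$ would violate it. Saying the boundary values are ``consistent with a liminf bound'' does not address this. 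The tools you name, lower semicontinuity of $-\log\det$ and of the matrix square root, also do not fit a formula that contains no log-determinant.
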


\begin{proof}
    See \cref{subsec:DG_prop_proof}.
\end{proof}

\begin{remark}
    The equivariance from \cref{thm:DG_prop}~4. holds for any isometry $\Phi$ in the Fisher-Rao metric.
    It holds for the Otto metric (with non-linear mobility if it acts by scalar multiplication) as well. 
\end{remark}

\begin{remark}[Energy dissipation interpretation]
    By \cref{thm:characterizing_geodesics}, the \emph{dissipation of $\F$ along $\gamma(t)$} is
    \begin{align*}
        \frac{\d}{\d t} \F(\gamma(t))
        & = \langle \Psi_{\gamma(t)}\big(\delta \F(\gamma(t))\big), \dot{\gamma}(t) \rangle_{L^2(M; \mu_g)} \\
        \overset{\eqref{eq:G-dual_geodesic}}&{=}
        - \int_{M} \dot{\gamma}(t) \Psi_{\gamma(t)}\big((\phi_{\gamma(t)}^G)^{-1}(\dot{\gamma}(t))\big) \d{\mu_g} \\
        & = - g_{\gamma(t)}^{G}(\dot{\gamma}(t), \dot{\gamma}(t))
        \le 0.
    \end{align*}
    Hence, if there is a unique $G$-dual geodesic between $\rho$ and $\nu$, then the $G$-divergence between $\rho$ and $\nu$ can be conveniently expressed as
    \begin{align} 
        D^{G}(\rho \mid \nu)
        & = \int_{0}^{1} t \left(-\frac{\d}{\d t} \F(\gamma(t))\right) \d{t} \notag \\
        & = \int_{0}^{1} \F(\gamma(t)) \d{t} - \F(\nu). \label{eq:divergence_short_formula}
    \end{align}
\end{remark}

\section{Explicit Expressions for the State-Space-Aware KL Divergence} \label{sec:explicit_Expressions}
In the following, we consider the manifold $M=\mathbb{R}^d$, which is \textit{non-compact} (see \citep[pp.~730--731]{L1988}). We will use the formal analogies of the formulas above, without claiming that they are always well-defined in this setting. \footnote{See the preprint \citep{DA2026} for a treatment in the non-compact setting, specializing to Gaussian measures on $\mathbb{R}^n$.}

First, we will treat the linearized Wasserstein metric $g_{\cdot}^{\dot{H}^{-1}}$.
Since this geometry is flat, the associated canonical divergence is symmetric and a squared norm.
\begin{theorem}[Linearized Wasserstein KL-divergence = Coulomb MMD] \label{thm:lW_KLD} 
    For $\rho, \nu \in P_+^{\infty}(\R^d)$, the linearized Wasserstein divergence $D^{\dot{H}^{-1}}(\rho \mid \nu)$ is equal to
    \begin{equation} \label{eq:D^dotH^-1}
        \frac{1}{2} \int_{\R^d} \int_{\R^d} \Phi(x - y) (\rho(x) - \nu(x)) (\rho(y) - \nu(y)) \d{x} \d{y},
    \end{equation}
    where $\Phi$ is the fundamental solution of 
    $- \Delta \Phi = \delta_0$.
\end{theorem}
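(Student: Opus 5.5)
The plan is to compute the $\dot{H}^{-1}$-dual geodesic explicitly and then evaluate the action integral in \eqref{eq:DG} directly.

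\textbf{The geodesic is a straight line.} With constant mobility $\m\equiv 1$, the Onsager operator $\phi_\rho^{\dot{H}^{-1}}([f]) = -\Delta f$ does not depend on $\rho$. Consequently the $G$-dual geodesic equation \eqref{eq:G-dual_geodesic} reduces to
\[
\dot\gamma(t) = -\Delta\big((-\Delta)^{-1}(a)\big) = a ,
\]
a constant velocity. Equivalently, by \cref{thm:characterizing_geodesics}, $\gamma$ is the $\dot H^{-1}$-gradient flow of $-\E_\rho[f]$, whose gradient $-\Delta f$ is constant in $\rho$. Hence every $G$-dual geodesic is affine, $\gamma(t) = \mu + t a$. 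Imposing $\gamma(0)=\mu=\rho$ and $\gamma(1)=\nu$ forces $a = \nu-\rho$. So there is exactly one admissible curve, namely the mixture segment $\gamma(t) = (1-t)\rho + t\nu$. It stays in $P_+^\infty(\R^d)$ because it is a convex combination of positive densities. The infimum in \eqref{eq:DG} is therefore over a single curve, and no minimization is needed.

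\textbf{Evaluating the action.} The metric does not depend on the base point either:
\[
g^{\dot H^{-1}}(a,a) = \langle a, \Psi(f)\rangle_{L^2} \quad\text{with } -\Delta f = a .
\]
Taking $f = \Phi * a$ with $\Phi$ the fundamental solution of $-\Delta\Phi=\delta_0$, we get $g(a,a) = \int a\,(\Phi*a)\,\d x$. The centering $\Psi_\rho$ can be dropped, since $\int a = 0$ kills constants. This gives
\[
g(a,a)=\iint \Phi(x-y)\,a(x)\,a(y)\,\d x\,\d y = \int|\nabla(\Phi * a)|^2\,\d x
\]
after integration by parts. Because $\dot\gamma = \nu-\rho$ is constant, the integrand is constant in $t$. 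The time weight then contributes $\int_0^1 t\,\d t = \tfrac12$, which yields \eqref{eq:D^dotH^-1}. The factor $(\rho-\nu)$ versus $(\nu-\rho)$ is irrelevant since the expression is quadratic. The result is symmetric, as the flatness of the geometry predicts.

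\textbf{Main obstacle.} The delicate step is justifying $(\phi^{\dot H^{-1}})^{-1}(a) = [\Phi * a]$ on the noncompact space $\R^d$. One must check that $\Phi*a$ is well defined and smooth, that it solves $-\Delta(\Phi*a)=a$, and that the boundary terms in the integration by parts vanish. For $d\ge 3$, the decay of $\Phi$ together with $\int a=0$ gives enough decay of $\Phi*a$ and its gradient at infinity. For $d\le 2$, $\Phi$ is logarithmic or linear, and one must use the zero-mass condition to control growth; this is also where finiteness of the energy could fail. In line with the paper's convention of using formal analogues on $\R^d$, I would state decay and moment assumptions under which these manipulations hold, and otherwise treat the identity formally.
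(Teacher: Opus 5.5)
Your proof is correct and follows essentially the paper's argument: both identify the $\dot H^{-1}$-dual geodesic as the mixture segment $\rho + t(\nu-\rho)$, since the Onsager operator $-\Delta$ does not depend on the base point, and both take $f = \Phi * (\nu-\rho)$ as the Poisson solution. The only difference is that you evaluate the action directly, where the integrand is constant so the time weight contributes $\int_0^1 t\,\d t = \tfrac12$, whereas the paper goes through the potential-energy form \eqref{eq:divergence_short_formula}; this is the same computation after an integration by parts in $t$.
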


\begin{proof}
    See \cref{sec:linW_KLD}.
\end{proof}

\begin{remark}[Wasserstein gradient flows of $D^{\dot{H}^{-1}}(\cdot \mid \nu)$]
    Up to the prefactor, the $\dot{H}^{-1}$ divergence \eqref{eq:D^dotH^-1} is the so-called maximum mean discrepancy \cite{BGRKSS06, GBRSS13} with the translation-invariant kernel $K(x, y) \coloneqq \Phi(x - y)$.
    For $d = 1$, the kernel is $K(x, y) = -\frac{1}{2} | x - y |$ and the (regularized) Wasserstein gradient flow of $D^{\dot{H}^{-1}}$ is treated in \cite{DSBHS2024,DRSS2025}.
    For higher dimensions, $\Phi$ is related to the Coulomb kernel, and the Wasserstein gradient flow of $D^{\dot{H}^{-1}}$ is discussed in \cite{BV2025,CCCF2026}.
\end{remark}

\subsection{The State-Space-Aware KL Divergence Between Elliptic Distributions}

First, we recall elliptical distributions -- the multivariate analogs of location-scale families \citep[Sec.~5]{S2002}. This class is stable under
affine pushforwards. 

\begin{definition}[Elliptic distribution]
    A probability density $\rho \in P_+^{\infty}(\R^d)$ is called \emph{elliptic} if there exist $m \in \R^d$, $\Sigma \in \Sym_+(\R; d)$ and a smooth, integrable function $g \colon [0, \infty) \to (0, \infty)$ with
    \begin{itemize}
        \item 
        $\frac{\pi^{\frac{d}{2}}}{\Gamma\left(\frac{d}{2}\right)} \int_{0}^{\infty} r^{\frac{d}{2} - 1} g(r) \d{r} = 1$ (normalization),

        \item 
        $\int_{0}^{\infty} r^{\frac{d - 1}{2}} g(r) \d{r} < \infty$ (finite expectation),

        \item 
        $\int_{0}^{\infty} r^{\frac{d}{2}} g(r) \d{r} < \infty$ (finite variance),
    \end{itemize}
    such that 
    \begin{equation*}
        \rho
        = \rho_{m, \Sigma, g}
        = \det(\Sigma)^{-\frac{1}{2}} g\left( (\cdot - m)^{\tT} \Sigma^{-1} (\cdot - m)\right).
    \end{equation*}
    Then, we write $\rho \sim E(m, \Sigma, g)$.
\end{definition}

\begin{remark}[Affine pushforward of elliptic distribution] \label{remark:affine_pushforward_elliptic}
    Given the affine linear map $h(x) \coloneqq A x + b$ for some $A \in \R^{d \times d}$ and $b \in \R^d$, we have that $\rho \sim E(m, \Sigma, g)$ implies that $h_{\#}\rho \sim E(A m + b, A \Sigma A^{\tT}, g)$ \citep[p.~279]{FJS2003}.
\end{remark}

\begin{example}[Elliptic distributions]
    For $g(x) = (2 \pi)^{-\frac{d}{2}}e^{-\frac{1}{2} x}$ we obtain \textit{Gaussian distributions} and for $g(x) = \tfrac{\Gamma\left(\frac{v + d}{2}\right)}{\Gamma\left(\frac{v}{2}\right)} (v \pi)^{-\frac{d}{2}} \left(1 + \frac{1}{v} x\right)^{-\frac{v + d}{2}}$ we obtain \textit{Student's t distributions} with parameter $v > 2$.
    There are also other lesser-known examples
    , see \citep[Sec.~6]{S2002}.
\end{example}

\begin{remark}
    By \cref{lemma:cov_elliptic_distributions}, a random variable $X$ with law $\rho \sim E(m, \Sigma, g)$ has $\E[X] = m$ and $\V[X] = \kappa_g \Sigma$, where $\kappa_g \coloneqq \frac{1}{d} \int_{\R^d} \| y \|^2 g(\| y \|^2) \d{y}$.
    For Gaussian distributions, we have $\kappa_g = 1$ and for Student's $t$ with $v$ degrees of freedom, we have $\kappa_g = \frac{v}{v - 2}$.
\end{remark}

\paragraph{Quadratic potentials}
For the transport-based metrics, we can obtain a $G$-dual geodesic from $\rho$ to $\nu$ staying in the submanifold $E(\cdot, \cdot, g) \subset \P_+^{\infty}$, by choosing the quadratic potential function $f(x) = \frac{1}{2}x^{\tT} B x + b^{\tT} x$, where $b \in \R^d$, and $B \in \R^{d \times d}$ is symmetric.
At present, we only conjecture that this is the unique geodesic, and we assume this conjecture holds in the following.
We will find a closed-form expression for the gradient flow starting at $\rho$ and determine $B$ and $b$ in terms of $\rho$ and $\nu$.

The transport-based metrics have the \emph{divergence form}
\begin{equation} \label{eq:Onsager_operator_in_divergence_form}
    \phi_{\rho}^{G}([f]) = - \nabla \cdot \left(\rho \cdot v^{G}\left(\rho, \nabla f\right)\right)
\end{equation}	
for some \emph{velocity field} $v^{G} \colon P_+^{\infty}(M) \times \C^{\infty}(M) \to \C^{\infty}(M)$.

\begin{example}[Affine vector fields for elliptic distributions] \label{example:affine_vector_fields_elliptic}
    For $\lambda > 0$, we have
    \begin{equation} \label{eq:velocity_field_KW}
        v^{\KW}(\rho_{m, \Sigma, g}, \nabla f)
        = x \mapsto (\kappa_g \Sigma + \lambda I_d) (B x + b),
    \end{equation}
    as is verified in \cref{sec:vector_fields_calculations}.
    Similarly, $v^{\O}(\rho, \nabla f) = \nabla f$.
    These vector fields are affine linear functions.
\end{example}

\begin{lemma}[Elliptic closure for affine linear vector fields] \label{lemma:closure_affine_vector_fields}
    Suppose $\dot{\rho}_t = - \nabla \cdot (\rho_t v_t)$ for an affine vector field $v_t(x) \coloneqq S_t x + s_t$ smoothly varying in $t$, with $S_t \in \R^{d \times d}$ and $s_t \in \R^d$ for all $t \ge 0$.
    If $\rho_0 \sim E(m_0, \Sigma_0, g)$, then $\rho_t \sim E(m_t, \Sigma_t, g)$, where $(m_t, \Sigma_t)$ solves
    \begin{equation} \label{eq:mean-covariance_ODE}
        \begin{cases}
            \dot{m}_t
            = S_t m_t + s_t, \\
            \dot{\Sigma}_t
            = S_t \Sigma_t + \Sigma_t S_t^{\top}
            \eqqcolon 2 \Sym(S_t \Sigma_t).
        \end{cases}
    \end{equation}    
\end{lemma}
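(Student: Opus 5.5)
The plan is to solve the continuity equation explicitly by the method of characteristics and then identify the resulting pushforward as an elliptic distribution via \cref{remark:affine_pushforward_elliptic}.

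\emph{Flow map.} Let $X_t$ denote the flow of the affine field, i.e.\ the solution of $\dot X_t(x) = S_t X_t(x) + s_t$ with $X_0(x) = x$. Since the ODE is linear and inhomogeneous with coefficients smooth in $t$, the flow is affine in $x$. Concretely, $X_t(x) = \Phi_t x + c_t$, where:
\begin{itemize}
\item $\Phi_t$ is the fundamental matrix, with $\dot\Phi_t = S_t\Phi_t$ and $\Phi_0 = I_d$; it is invertible for all $t$ because $\det\Phi_t = \exp\big(\int_0^t \tr S_r \d r\big) > 0$;
\item $c_t$ solves $\dot c_t = S_t c_t + s_t$ with $c_0 = 0$.
\end{itemize}
The standard fact that $\rho_t \coloneqq (X_t)_\#\rho_0$ solves $\dot\rho_t = -\nabla\cdot(\rho_t v_t)$ follows by differentiating $\int \psi \d\rho_t = \int \psi(X_t(x))\rho_0(x)\d x$ for test functions $\psi$. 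This gives the weak form $\frac{\d}{\d t}\int\psi\,\rho_t = \int \langle\nabla\psi, v_t\rangle\rho_t$. Integrating by parts, justified by the smoothness and finite first moments of elliptic densities, then yields the strong form.

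\emph{Uniqueness.} This is the main obstacle: we must identify the given $\rho_t$ with the pushforward. For a smooth, globally Lipschitz (here affine) velocity field, solutions of the continuity equation with given initial datum are unique within the class of narrowly continuous curves of probability measures. I would cite this standard result (Ambrosio--Gigli--Savaré, Prop.~8.1.7/8.1.8) rather than reprove it, consistent with the paper's formal treatment on the non-compact space $\R^d$.

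\emph{Identification.} By \cref{remark:affine_pushforward_elliptic} with $A = \Phi_t$ and $b = c_t$, we get $\rho_t \sim E(\Phi_t m_0 + c_t,\ \Phi_t\Sigma_0\Phi_t^{\tT},\ g)$. The generator $g$ is unchanged, and positivity of the covariance is preserved since $\Phi_t$ is invertible. Set $m_t \coloneqq \Phi_t m_0 + c_t$ and $\Sigma_t \coloneqq \Phi_t\Sigma_0\Phi_t^{\tT}$, and differentiate:
\begin{align*}
\dot m_t &= S_t\Phi_t m_0 + S_t c_t + s_t = S_t m_t + s_t,\\
\dot\Sigma_t &= S_t\Phi_t\Sigma_0\Phi_t^{\tT} + \Phi_t\Sigma_0\Phi_t^{\tT}S_t^{\tT} = S_t\Sigma_t + \Sigma_t S_t^{\tT}.
\end{align*}
This is exactly \eqref{eq:mean-covariance_ODE}, with initial conditions $(m_0,\Sigma_0)$. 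By uniqueness for this linear ODE system, $(m_t,\Sigma_t)$ is its solution, which completes the argument.
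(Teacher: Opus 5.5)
Your argument is correct, and its first half matches the paper's proof. The paper likewise cites Ambrosio--Gigli--Savaré to write $\rho_t = (X_t)_\#\rho_0$ with the affine flow $X_t(x) = A_t x + a_t$, where $\dot A_t = S_t A_t$ and $\dot a_t = S_t a_t + s_t$. It then applies the affine-pushforward remark. You additionally make explicit two things the paper leaves implicit: the uniqueness step identifying the given $\rho_t$ with the pushforward, and the invertibility of $\Phi_t$ via Liouville's formula.

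The two routes differ in how the ODE is obtained. The paper does not differentiate the parameters $\Phi_t m_0 + c_t$ and $\Phi_t \Sigma_0 \Phi_t^{\top}$. Instead it computes $\frac{\mathrm{d}}{\mathrm{d}t}\int x\,\rho_t(x)\,\mathrm{d}x$ and $\frac{\mathrm{d}}{\mathrm{d}t}\int (x-m_t)(x-m_t)^{\top}\rho_t(x)\,\mathrm{d}x$ directly from the continuity equation, using integration by parts on $\mathbb{R}^d$.

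Your version has two advantages. It is shorter, and its ODE step needs no integration by parts or decay justification on $\mathbb{R}^d$. It also works with the shape parameter $\Sigma_t$ itself. The paper's computation actually concerns the true covariance, which is $\kappa_g\Sigma_t$, and transfers to $\Sigma_t$ only because the equation is linear and homogeneous in $\Sigma$.

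The paper's moment computation, in turn, buys generality. It shows the mean--covariance ODE holds for any finite-second-moment solution transported by an affine field, independently of the elliptic structure established in the first step.
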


\begin{proof}
    See \cref{subsec:closure_affine_vector_fields}.
\end{proof}


We will obtain explicit solutions of the system \eqref{eq:mean-covariance_ODE}.
Thus, by \eqref{eq:divergence_short_formula} and \cref{lemma:int_quadratic_Gaussian} we can then evaluate the divergence as
\begin{equation} \label{eq:divergence_elliptic_formula}
\boxed{\begin{aligned}
    D^{G}(\rho \mid \nu)
    & = \frac{\kappa_g}{2} \tr(B \Sigma_1) + \frac{1}{2} m_1^{\tT} (B m_1 + 2 b) \\
    & \quad - \int_{0}^{1} \frac{\kappa_g}{2} \tr(B \Sigma_t) + \frac{1}{2} m_t^{\tT} (B m_t + 2 b) \d{t}.
\end{aligned}}
\end{equation}

First, we recall the following well-known result.
\begin{theorem}
\label{thm:FR}
    For $\rho, \nu \in P_+^{\infty}(\R^d)$ satisfying the assumption \eqref{eq:dominating_function} in the Appendix we have $D^{\FR}(\rho \mid \nu) = \KL(\nu \mid \rho)$.
\end{theorem}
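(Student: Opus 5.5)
The plan is to compute the Fisher--Rao dual geodesic explicitly and then evaluate the action integral with \eqref{eq:divergence_short_formula}. By \cref{thm:characterizing_geodesics}, with $G=\FR$, the $\FR$-dual geodesic from $\rho$ with initial covector $[f]$ solves
\begin{equation*}
\dot\gamma(t)=\phi^{\FR}_{\gamma(t)}([f])=\bigl(f-\E_{\gamma(t)}[f]\bigr)\gamma(t),
\end{equation*}
the replicator (Fisher--Rao gradient) flow of $\F(\rho)=-\E_\rho[f]$. This ODE is solved pointwise by the exponential tilt $\gamma(t)=e^{tf}\rho/Z(t)$ with $Z(t)=\int e^{tf}\rho\,\d x$. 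I will check this by differentiating $\ln\gamma(t)=tf+\ln\rho-\ln Z(t)$, which gives $\dot\gamma=(f-Z'/Z)\gamma$ and $Z'/Z=\E_{\gamma(t)}[f]$. So the dual geodesic is the exponential-family (e-)geodesic.

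To hit $\gamma(1)=\nu$ we need $e^{f}\rho/Z(1)=\nu$. Up to additive constants, which are irrelevant in $\C^\infty(\R^d)/\R$, this forces $f=\ln(\nu/\rho)$, and then $Z(1)=1$. Hence the connecting dual geodesic exists and is unique, so the infimum in \eqref{eq:DG} is over a single curve, namely $\gamma(t)=\rho^{1-t}\nu^{t}/Z(t)$ with $Z(t)=\int\rho^{1-t}\nu^t\d x$. This is where the assumption \eqref{eq:dominating_function} enters. It must guarantee that $Z(t)<\infty$ on $[0,1]$ and that I may differentiate under the integral (in $Z$ and in $\E_{\gamma(t)}[f]$), since $\R^d$ is non-compact and $f$ is unbounded. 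I will invoke it for dominated convergence at each such step.

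For the evaluation I will use \eqref{eq:divergence_short_formula}: $D^{\FR}(\rho\mid\nu)=\int_0^1\F(\gamma(t))\d t-\F(\nu)$. Here $\F(\gamma(t))=-\E_{\gamma(t)}[f]=-(\ln Z)'(t)$. Integrating gives $\int_0^1\F(\gamma(t))\d t=-\ln Z(1)+\ln Z(0)=0$, because $Z(0)=Z(1)=1$. Therefore
\begin{equation*}
D^{\FR}(\rho\mid\nu)=-\F(\nu)=\E_\nu[f]=\int\nu\ln(\nu/\rho)\d x=\KL(\nu\mid\rho).
\end{equation*}
As a cross-check, the same result follows by integrating by parts directly in $\int_0^1 t\,g^{\FR}(\dot\gamma,\dot\gamma)\d t=\int_0^1 t\,\V_{\gamma(t)}[f]\d t=\int_0^1 t(\ln Z)''\d t$. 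This equals $(\ln Z)'(1)-\ln Z(1)+\ln Z(0)=\E_\nu[f]$, and it also shows that the derivation of \eqref{eq:divergence_short_formula} is legitimate here.

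The main obstacle is justifying these steps on non-compact $\R^d$. Three things need care: finiteness and smoothness of $Z(t)$, exchanging derivative and integral to get $(\ln Z)'=\E_{\gamma}[f]$ and $(\ln Z)''=\V_\gamma[f]$, and finiteness of $\E_\nu[\ln(\nu/\rho)]$. I expect \eqref{eq:dominating_function} to supply an integrable majorant of $|\ln(\nu/\rho)|^k\rho^{1-t}\nu^t$ for $k\le 2$, uniformly in $t\in[0,1]$, and I will cite it at each of these three points.
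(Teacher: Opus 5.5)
Your proposal is correct and follows essentially the paper's proof. The paper also takes the geometric-mixture curve $\gamma(t)=\rho^{1-t}\nu^{t}/Z(t)$ and derives $\dot\gamma=(f-\E_{\gamma(t)}[f])\gamma$ with $f=\ln(\nu/\rho)$. It then carries out exactly your cross-check: it integrates $\int_0^1 t\,\V_{\gamma(t)}[f]\,\d t$ by parts and uses $\int_0^1(\ln Z)'\,\d t=\ln Z(1)-\ln Z(0)=0$. The majorant you anticipate comes from the weighted AM--GM bound $\rho^{1-t}\nu^{t}\le(1-t)\rho+t\nu\le\rho+\nu$ together with the $k=2$ integrability assumption. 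Your explicit uniqueness argument for the connecting dual geodesic is a small addition that the paper leaves implicit.
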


\begin{proof}
    See \cref{subsec:FR_calculations}.
\end{proof}

The next result provides a closed-form expression for the WKL divergence between two elliptic distributions and is a trivial extension of the result for Gaussian distributions developed in \cite{DA2025}. 
\begin{theorem}[WKL divergence] \label{thm:WKL}
    Let $\rho \sim E(m_0, \Sigma_0, g)$ and $\nu \sim E(m_1, \Sigma_1, g)$ and set $\Delta m = m_1-m_0$.
    Then
\begin{align} 
    D^{\O}(\rho \mid \nu) &= \frac{\kappa_g}{4}\tr{\left(\Sigma_0-\Sigma_1+\Sigma_0R^2\log(R^2)\right)} \nonumber \\
    & \hspace{0.5cm}+\frac{1}{4}\left\lVert \sqrt{Q+2\log(R)^{\perp}}\Delta m \right\rVert^2, \label{eq:D_main_formula}
\end{align}
where $A^{\perp} \coloneqq I - A A^{\dagger}$ for any symmetric matrix $A$, and
\begin{align*}
    R&=\Sigma_0^{-\frac{1}{2}}\left(\Sigma_0^{\frac{1}{2}}\Sigma_1\Sigma_0^{\frac{1}{2}}\right)^{\frac{1}{2}}\Sigma_0^{-\frac{1}{2}}\\
    Q&=(R-I)^{\dagger}\left(\log(R^2)R^2-R^2+I\right)(R-I)^{\dagger}.
\end{align*}
Furthermore, if $\Sigma_0 \Sigma_1 = \Sigma_1 \Sigma_0$, then we get the following simplification:
\begin{align*}
  D^{\O}(\rho \mid \nu) &= \frac{\kappa_g}{4}\left\lVert \sqrt{Q}\left(\sqrt{\Sigma_1}-\sqrt{\Sigma_0}\right) \right\rVert_F^2 \\
  & \hspace{0.5cm} +\frac{1}{4}\left\lVert \sqrt{Q+2\log(R)^{\perp}} \Delta m \right\rVert^2.
\end{align*}
\end{theorem}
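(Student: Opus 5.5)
We will follow the recipe laid out just before the statement. We take the quadratic potential $f(x)=\tfrac12 x^{\tT}Bx+b^{\tT}x$ and use $v^{\O}(\rho,\nabla f)=\nabla f = Bx+b$ from \cref{example:affine_vector_fields_elliptic}. By \cref{thm:characterizing_geodesics}, the $\O$-dual geodesic is the Wasserstein gradient flow of $\F=-\E_\cdot[f]$. This flow is the continuity equation $\dot\rho_t=-\nabla\cdot(\rho_t(Bx+b))$ with a \emph{time-independent} affine field, so \cref{lemma:closure_affine_vector_fields} applies with $S_t\equiv B$ and $s_t\equiv b$. It keeps $\rho_t\sim E(m_t,\Sigma_t,g)$, where
\begin{equation*}
m_t=e^{tB}m_0+\int_0^t e^{sB}\d s\, b,\qquad \Sigma_t=e^{tB}\Sigma_0e^{tB}.
\end{equation*}

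Next we fix $B,b$ from the boundary condition $\rho_1=\nu$. The covariance condition $e^{B}\Sigma_0e^{B}=\Sigma_1$ with $e^B$ symmetric positive definite is the Riccati equation $X\Sigma_0X=\Sigma_1$. Its unique SPD solution is the Wasserstein transport matrix $X=R$, so $B=\log R$ and $e^{2tB}=R^{2t}$. The mean condition reads $\Delta m=(R-I)m_0+\varphi(B)b$, where $\varphi(B)=\int_0^1e^{sB}\d s=(R-I)\log(R)^{\dagger}$ on $\operatorname{ran}B$, and $\varphi(B)=I$ on $\ker B$. We solve this for $b$ using pseudo-inverses, splitting $\R^d$ into $\ker\log R$ and its orthogonal complement. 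On the kernel, $R=I$ and the mean simply translates linearly; this is the origin of the projector $\log(R)^{\perp}$ in the formula.

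Then we insert everything into the boxed formula \eqref{eq:divergence_elliptic_formula}, which is \eqref{eq:divergence_short_formula} combined with \cref{lemma:int_quadratic_Gaussian}. For the covariance part we use $\tr(B\Sigma_t)=\tr(\Sigma_0 R^{t}\log(R)R^{t})=\tr(\Sigma_0R^{2t}\log R)$, since the matrix $\log R$ commutes with $R^t$. Integration gives $\int_0^1 R^{2t}\d t\,\log R=\tfrac12(R^2-I)$. The covariance part is therefore
\begin{equation*}
\tfrac{\kappa_g}{2}\tr\big(\Sigma_0R^2\log R\big)-\tfrac{\kappa_g}{4}\tr\big(\Sigma_0(R^2-I)\big),
\end{equation*}
and using $\tr(\Sigma_0R^2)=\tr(R\Sigma_0R)=\tr\Sigma_1$ this becomes $\tfrac{\kappa_g}{4}\tr(\Sigma_0-\Sigma_1+\Sigma_0R^2\log R^2)$.

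The mean part is the main obstacle. We will show that it depends only on $\Delta m$, as translation invariance (\cref{thm:DG_prop}~4) suggests, and that it equals $\tfrac14\Delta m^{\tT}(Q+2\log(R)^{\perp})\Delta m$. One subtlety is that the trace term and the mean term both involve $m_t$ through $\E[f]$, so we should first check that the decomposition is clean. To do this we shift coordinates so that $m_0=0$, using isometry invariance; the quadratic $f$ changes only by an affine term absorbed into $b$. Then $m_t=\varphi_t(B)b$ with $\varphi_t(B)=\int_0^te^{sB}\d s$, and the mean contribution
\begin{equation*}
\tfrac12 m_1^{\tT}(Bm_1+2b)-\int_0^1\tfrac12m_t^{\tT}(Bm_t+2b)\d t
\end{equation*}
is a quadratic form in $b$. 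Since all matrices involved are functions of $B$, we diagonalize $B=\log R$ and reduce to a scalar computation for each eigenvalue $\beta$. Using $\varphi_t(\beta)=(e^{t\beta}-1)/\beta$ and $b=\beta\Delta m/(e^\beta-1)$, elementary integrals should give the scalar function $\tfrac14 q(r)$ with $r=e^\beta$ and $q(r)=(\log(r^2)r^2-r^2+1)/(r-1)^2$. Letting $\beta\to0$ should give the value $\tfrac12$, i.e.\ the $2\log(R)^\perp$ term. Checking this scalar identity and its limit is the key computation.

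Finally, in the commuting case $\Sigma_0\Sigma_1=\Sigma_1\Sigma_0$, we have $R=\Sigma_0^{-1/2}\Sigma_1^{1/2}$ and all matrices commute. A direct simultaneous-diagonalization check shows
\begin{equation*}
\tr\big(\Sigma_0-\Sigma_1+\Sigma_0R^2\log R^2\big)=\|\sqrt Q(\sqrt{\Sigma_1}-\sqrt{\Sigma_0})\|_F^2,
\end{equation*}
which is eigenvalue-wise the same identity $q(r)(r-1)^2\sigma_0=\sigma_0(\ldots)$.
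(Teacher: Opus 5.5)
Your proposal is correct and follows essentially the same route as the paper: quadratic potential, the affine closure lemma with $S_t\equiv B$ and $s_t\equiv b$, the unique solution $B=\log R$ of $e^B\Sigma_0e^B=\Sigma_1$, $b$ obtained from the invertible mean equation, and then the boxed formula \eqref{eq:divergence_elliptic_formula}. The paper only states that substituting $B$ and $b$ yields the result. Your trace computation and your spectral reduction of the mean part to the per-eigenvalue factor $\frac{2\beta e^{2\beta}-e^{2\beta}+1}{4(e^\beta-1)^2}=\frac14 q(e^\beta)$, which equals $\frac12$ at $\beta=0$, supply exactly the omitted algebra, and both are correct.
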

\begin{proof}
    See Section~\ref{proof:WKL}.
\end{proof}

\begin{theorem}[KWKL divergence] \label{thm:KWKL}
    Let $\rho \sim E(m_0, \Sigma_0, g)$ and $\nu \sim E(m_1, \Sigma_1, g)$, where $\Sigma_0$ and $\Sigma_1$ are simultaneously diagonalizable. 
    With $\Delta m \coloneqq m_1 - m_0$ and $\Delta \Sigma \coloneqq \Sigma_1 - \Sigma_0$ we have
    \begin{align*}
        D^{\KW}(\rho \mid \nu)
        & = \frac{\kappa_g}{4 \lambda} \tr\left(\Xi\right)
        +  \frac{1}{2} \Delta m^{\tT} \left(\Delta \Sigma^{\perp} \Sigma_{0, \lambda}^{-1} \Delta \Sigma^{\perp}\right) \Delta m\\
        & \quad + \frac{1}{4 \lambda} \Delta m^{\tT} P
        \Xi
             P \left((\sqrt{\Sigma_1} - \sqrt{\Sigma_0})^{\dagger}\right)^2 \Delta m,
    \end{align*}
    where $P \coloneqq \Delta \Sigma (\Delta \Sigma)^{\dagger}$ and $\Sigma_{\ell, \lambda} \coloneqq \kappa_g \Sigma_{\ell} + \lambda I$ for $\ell \in \{ 0, 1 \}$, and
    \begin{equation*}
        \Xi
        \coloneqq \Sigma_1 \ln\big(\Sigma_1 \Sigma_0^{-1}\big) + \frac{1}{\kappa_g} \Sigma_{1, \lambda} \ln\big( \Sigma_{0, \lambda} \Sigma_{1, \lambda}^{-1}\big).
    \end{equation*}
\end{theorem}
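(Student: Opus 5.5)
We follow the template set up for transport-based geometries. By \cref{thm:characterizing_geodesics} and \cref{example:affine_vector_fields_elliptic}, the KW-dual geodesic generated by the quadratic potential $f(x)=\tfrac12 x^{\tT}Bx+b^{\tT}x$ is the continuity equation with the affine velocity $v_t(x)=\Sigma_{t,\lambda}(Bx+b)$, where $\Sigma_{t,\lambda}\coloneqq\kappa_g\Sigma_t+\lambda I$. Here the velocity direction is $+\nabla f$, since $\dot\gamma=-\grad\F$ with $\F=-\E[f]$. Because $B,b$ are frozen along the flow, \cref{lemma:closure_affine_vector_fields} applies with $S_t=\Sigma_{t,\lambda}B$ and $s_t=\Sigma_{t,\lambda}b$. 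The curve therefore stays in $E(\cdot,\cdot,g)$, and its parameters solve
\begin{equation*}
\dot\Sigma_t=\Sigma_{t,\lambda}B\Sigma_t+\Sigma_tB\Sigma_{t,\lambda},\qquad \dot m_t=\Sigma_{t,\lambda}(Bm_t+b).
\end{equation*}
We assume, as in the paper, that this is the geodesic realizing the infimum. The proof then has three steps: solve these ODEs, fit $(B,b)$ to the endpoint conditions, and evaluate \eqref{eq:divergence_elliptic_formula}.

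\textbf{Covariance.} Work in a common eigenbasis of $\Sigma_0,\Sigma_1$ and take $B$ diagonal in that basis. The matrix ODE then decouples into scalar equations for the eigenvalues:
\begin{equation*}
\dot\sigma=2\beta\sigma(\kappa_g\sigma+\lambda).
\end{equation*}
This is a logistic/Bernoulli equation and can be solved by partial fractions:
\begin{equation*}
\tfrac{1}{\lambda}\ln\frac{\sigma_t}{\kappa_g\sigma_t+\lambda}=\tfrac{1}{\lambda}\ln\frac{\sigma_0}{\kappa_g\sigma_0+\lambda}+2\beta t.
\end{equation*}
Imposing $\sigma_1$ at $t=1$ gives
\begin{equation*}
\beta=\tfrac{1}{2\lambda}\Big[\ln(\sigma_1/\sigma_0)+\ln\big(\sigma_{0,\lambda}/\sigma_{1,\lambda}\big)\Big].
\end{equation*}
This is exactly where $\ln(\Sigma_1\Sigma_0^{-1})$ and $\ln(\Sigma_{0,\lambda}\Sigma_{1,\lambda}^{-1})$ in $\Xi$ come from, with the $1/\lambda$ prefactor.

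For the covariance part of \eqref{eq:divergence_elliptic_formula} we need $\int_0^1\tr(B\Sigma_t)\,\d t$. Computing $\sigma_t$ explicitly is unnecessary. The ODE gives $\beta\sigma=\dot\sigma/(2(\kappa_g\sigma+\lambda))$, so
\begin{equation*}
\int_0^1\beta\sigma_t\,\d t=\tfrac{1}{2\kappa_g}\ln\big(\sigma_{1,\lambda}/\sigma_{0,\lambda}\big).
\end{equation*}
Combined with the boundary term $\tfrac{\kappa_g}{2}\beta\sigma_1$, this yields the first term $\tfrac{\kappa_g}{4\lambda}\tr(\Xi)$. The $\tfrac1{\kappa_g}\Sigma_{1,\lambda}$ weighting should emerge after using $\lambda\beta=\tfrac12[\dots]$ to rewrite the logarithm.

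\textbf{Mean.} The mean equation is linear and non-autonomous: $\dot m_t=\Sigma_{t,\lambda}Bm_t+\Sigma_{t,\lambda}b$. Its homogeneous propagator is available in closed form in the eigenbasis, because $\Sigma_{t,\lambda}B$ is diagonal and
\begin{equation*}
\int_0^t\beta\sigma_{s,\lambda}\,\d s=\tfrac12\ln(\sigma_t/\sigma_0).
\end{equation*}
The propagator is therefore $(\Sigma_t\Sigma_0^{-1})^{1/2}$, up to the $\lambda$-correction, which should be checked. Solving componentwise, impose $m_1$ to get $b$ linearly in $\Delta m$ and $Bm_0$. Then substitute into $\tfrac12 m_1^{\tT}(Bm_1+2b)-\int_0^1\tfrac12 m_t^{\tT}(Bm_t+2b)\,\d t$.

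The directions fall into two classes:
\begin{itemize}
\item Where $\sigma_1=\sigma_0$ we have $\beta=0$, the velocity is $\Sigma_{0,\lambda}b$, the mean moves linearly, and the quadratic form is $\tfrac12\Delta m^{\tT}\Sigma_{0,\lambda}^{-1}\Delta m$. This gives the $\Delta\Sigma^{\perp}$ term.
\item On the range of $P$, the integrals should assemble into $\tfrac{1}{4\lambda}\Delta m^{\tT}P\Xi P\big((\sqrt{\Sigma_1}-\sqrt{\Sigma_0})^{\dagger}\big)^2\Delta m$, using that the propagator integrates to ratios of $\sqrt{\sigma_1}-\sqrt{\sigma_0}$.
\end{itemize}

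\textbf{Main obstacle.} We expect the hardest part to be this mean computation on the range of $P$: carrying out the time integrals of $m_t^{\tT}Bm_t$ with the logistic-type $\sigma_t$ and simplifying them to the stated compact form. As a first attempt, we would handle one eigen-direction at a time and reparametrize time by $u=\sqrt{\sigma_t}$, under which the propagator becomes affine. We would then verify the $\lambda\to0$ limit against \cref{thm:WKL} in the commuting case.
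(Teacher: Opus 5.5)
\textbf{Verdict: the route is right, but the third term is not proved.} Your route is the paper's: diagonalize in a common eigenbasis, solve the logistic equation for the eigenvalues, fit $(B,b)$ to the endpoints, and evaluate \eqref{eq:divergence_elliptic_formula}.

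\textbf{Covariance term.} This part is correct. Your identity $\beta\sigma_t=\dot\sigma_t/\bigl(2(\kappa_g\sigma_t+\lambda)\bigr)$ gives $\int_0^1\beta\sigma_t\,\d t$ more cleanly than the paper's explicit integration of $1/\xi_t$. Per eigendirection it yields $\beta\bigl(\sigma_1-\int_0^1\sigma_t\,\d t\bigr)=\tfrac{1}{2\lambda}\Xi_{ii}$, with $\Xi_{ii}$ the eigenvalue of $\Xi$ in that direction.

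\textbf{The gap.} The third term is the only nontrivial content beyond the covariance term, and you do not carry it out. On the range of $P$ you only assert that the integrals ``should assemble'', worry about a possible $\lambda$-correction to the propagator, and propose a reparametrization by $\sqrt{\sigma_t}$. The $\beta=0$ directions are fine: there the mean moves linearly, and $\tilde b=\Delta\tilde m/\sigma_{0,\lambda}$ gives $\tfrac12(\Delta\tilde m)^2/\sigma_{0,\lambda}$ at once.

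\textbf{Closing it.} The missing step is short.
\begin{itemize}
\item There is no $\lambda$-correction. Since $\beta\sigma_{t,\lambda}=\dot\sigma_t/(2\sigma_t)$ exactly, setting $c\coloneqq\tilde b/\beta$ gives $\tilde m_t+c=(\tilde m_0+c)\sqrt{\sigma_t/\sigma_0}$.
\item The endpoint condition then yields $\tilde m_0+c=\sqrt{\sigma_0}\,\Delta\tilde m/(\sqrt{\sigma_1}-\sqrt{\sigma_0})$.
\item Complete the square: $\tilde m(\beta\tilde m+2\tilde b)=\beta\bigl((\tilde m+c)^2-c^2\bigr)$. The constant $c^2$ cancels between the boundary term and the time integral.
\item Since $(\tilde m_t+c)^2$ is proportional to $\sigma_t$, the mean contribution reduces to the integral you already computed:
\[
\frac{\beta}{2}\Bigl[(\tilde m_1+c)^2-\int_0^1(\tilde m_t+c)^2\,\d t\Bigr]
=\frac{(\Delta\tilde m)^2}{2(\sqrt{\sigma_1}-\sqrt{\sigma_0})^2}\,\beta\Bigl(\sigma_1-\int_0^1\sigma_t\,\d t\Bigr)
=\frac{1}{4\lambda}\,\frac{(\Delta\tilde m)^2}{(\sqrt{\sigma_1}-\sqrt{\sigma_0})^2}\,\Xi_{ii}.
\]
\end{itemize}
This is the third term written in the eigenbasis, and no reparametrization of time is needed.

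\textbf{Sanity check.} The check you plan is aimed at the wrong limit. As $\lambda\searrow0$ one has $\Xi\to0$, and $D^{\KW}$ tends to a KL-type (Kalman) quantity, not to $D^{\O}$. The Wasserstein limit is $\lambda D^{\KW}\to D^{\O}$ as $\lambda\to\infty$. There $\Xi_{ii}\to\sigma_1\ln(\sigma_1/\sigma_0)+\sigma_0-\sigma_1$, which matches \cref{thm:WKL} in the commuting case.
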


\begin{proof}
    See \cref{sec:KW-KL_proof}.
\end{proof}

Thus, for elliptic distributions with equal covariance, taking KW-geodesics in the dynamical formulation of the KL \eqref{eq:KL_dynamic} corresponds to the \enquote{ad-hoc numerical technique called [additive] covariance inflation} \cite{TS2024} (also called \textit{enforcing a noise floor}) often used in the Kalman filter literature.
\begin{corollary}[$D^{\KW}$ enforces a noise floor]
    If $\rho \sim E(m_0, \Sigma, g)$ and $\nu \sim E(m_1, \Sigma, g)$, then
    \begin{equation} \label{eq:KWKL_same-var}
        D^{\KW}(\rho \mid \nu)
        = \frac{1}{2} (\Delta m)^{\tT} (\kappa_g \Sigma + \lambda I)^{-1} \Delta m.
    \end{equation}
    In particular, for $\rho \sim \NN(m_0, \Sigma)$ and $\nu \sim \NN(m_1, \Sigma)$
    \begin{equation*} \label{eq:KW-KL_Gaussians_equal_variance}
        D^{\KW}(\rho \mid \nu)
        = \KL\big(\NN(m_0, \Sigma + \lambda I), \NN(m_1, \Sigma + \lambda I)\big).   
    \end{equation*}
\end{corollary}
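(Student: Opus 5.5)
The plan is to specialize \cref{thm:KWKL} to $\Sigma_0 = \Sigma_1 = \Sigma$ and then cross-check the result with a direct geodesic computation. The cross-check avoids the degenerate pseudo-inverse terms, which are the only delicate point.

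\textbf{Step 1: specialize \cref{thm:KWKL}.} Two equal covariances are trivially simultaneously diagonalizable, so the theorem applies. With $\Sigma_0=\Sigma_1=\Sigma$ we get
\[
\Delta\Sigma = 0, \quad P = \Delta\Sigma(\Delta\Sigma)^\dagger = 0, \quad \Delta\Sigma^\perp = I - 0 = I, \quad \Sigma_{0,\lambda} = \Sigma_{1,\lambda} = \kappa_g\Sigma + \lambda I.
\]
Both logarithms in $\Xi$ are $\ln(I) = 0$, so $\Xi = 0$. Hence the first and third terms of \cref{thm:KWKL} vanish, and the middle term becomes $\frac12 \Delta m^{\tT}(\kappa_g\Sigma+\lambda I)^{-1}\Delta m$. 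This is \eqref{eq:KWKL_same-var}.

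\textbf{Step 2: direct verification (main obstacle).} The risk in Step 1 is that \cref{thm:KWKL} may have been derived assuming $\Delta\Sigma$ invertible on some subspace, or may use a limiting convention for $\Xi$. I would therefore recompute from the definitions.

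Take the linear potential $f(x) = b^{\tT}x$, i.e.\ $B=0$ in the quadratic ansatz. By \cref{example:affine_vector_fields_elliptic} the KW velocity field is the constant $v = (\kappa_g\Sigma+\lambda I)b$. Applying \cref{lemma:closure_affine_vector_fields} with $S_t = 0$ gives
\[
\Sigma_t \equiv \Sigma, \qquad m_t = m_0 + t(\kappa_g\Sigma+\lambda I)b.
\]
Imposing $m_1 = m_0 + \Delta m$ forces $b = (\kappa_g\Sigma+\lambda I)^{-1}\Delta m$. Because $\lambda>0$, this matrix is always invertible, even when $\Sigma$ degenerates.

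Plugging $B=0$ into \eqref{eq:divergence_elliptic_formula} gives
\[
D^{\KW}(\rho\mid\nu) = m_1^{\tT}b - \int_0^1 m_t^{\tT}b \,\d t = b^{\tT}\Bigl(m_1 - m_0 - \tfrac12\Delta m\Bigr) = \tfrac12\, b^{\tT}\Delta m,
\]
which again equals \eqref{eq:KWKL_same-var}. The sign convention of $\F$ in \cref{thm:characterizing_geodesics} must be tracked here; it makes $\dot m_t$ point along $+(\kappa_g\Sigma+\lambda I)b$. As throughout the section, we rely on the standing assumption that the quadratic-potential geodesic is the unique one.

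\textbf{Step 3: Gaussian case.} For Gaussians $\kappa_g = 1$. Recall that for two Gaussians with common covariance $C$,
\[
\KL\bigl(\NN(m_0,C),\NN(m_1,C)\bigr) = \tfrac12\,\Delta m^{\tT}C^{-1}\Delta m,
\]
since the trace and log-determinant terms cancel. Taking $C = \Sigma+\lambda I$ matches Step 2 and yields the second identity.
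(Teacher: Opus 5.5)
Your proposal is correct and follows the paper's route. The corollary is \cref{thm:KWKL} specialized to $\Delta\Sigma = 0$: then $P = 0$, $\Delta\Sigma^{\perp} = I$ and $\Xi = 0$, so only the middle term survives. Your direct check with $B = 0$ is a sound but redundant cross-check; it reproduces exactly the $\beta_i = 0$ branch of the paper's proof of \cref{thm:KWKL}, where $B = \frac{1}{2\lambda}\ln(I) = 0$ is forced anyway.
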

In particular, if $\Sigma$ degenerates, then $D^{\KW}$ is still well-defined, unlike $\KL$.

\begin{figure}[H]
    \centering
    \begin{tikzpicture}[scale=.75]
    \def\a{.5}   
    \def\lam{1}
      \begin{axis}[
          width=8cm, height=6cm,
          xmin=0, xmax=5.5,
          ymode=log,
          ymin=0.05,
          samples=200,
          xlabel={$\sigma^2$},
          grid=both,
          major grid style={thin,gray!30},
          minor grid style={very thin,gray!15},
          minor tick num=4,
          legend cell align=left,
          legend style={at={(0.05,0.98)},anchor=north west, draw=black, fill=white, font=\small},
          clip=false 
        ]
    
        \addplot[very thick, blue, domain=0.01:5.5] expression {\a/x};
        \addlegendentry{\shortstack[l]{$\displaystyle \KL\!\big(\NN(m_0,\sigma^2)\mid \NN(m_1,\sigma^2)\big)$}}
    
        \addplot[very thick, red, domain=0:5.5] expression {\a/(x+\lam)};
        \addlegendentry{\shortstack[l]{$\displaystyle D^{\KW}\!\big(\NN(m_0,\sigma^2) \mid \NN(m_1,\sigma^2)\big)$, $\lambda = 1$}}

        \addplot[very thick, orange, domain=0:5.5] expression {\a};
        \addlegendentry{\shortstack[l]{$\displaystyle \WKL\!\big(\NN(m_0,\sigma^2)\mid\NN(m_1,\sigma^2)\big)$}}
    
        \draw[fill=black] (axis cs:0,\a) circle (2.5pt);
        \node[font=\small, anchor=east, xshift=-1mm] at (axis cs:0,\a) {$\dfrac{1}{2}\|\Delta m\|_2^2$};
      \end{axis}
    \end{tikzpicture}
    \caption{For $\rho \sim \NN(m_0, \sigma^2 I_n)$ and $\nu \sim \NN(m_1, \sigma^2 I_n)$, we compare $D^{\KW}(\rho \mid \nu) = \frac{1}{2(\sigma^2 + \lambda)}\|\Delta m\|_2^2$ with $\KL\big(\rho \mid \nu\big) = \frac{1}{2 \sigma^2}\|\Delta m\|_2^2$ and $\WKL(\rho \mid \nu) = \frac{1}{2}\|\Delta m\|_2^2$,
    where $\lambda = 1$ and $\|\Delta m\|_2^2 = \frac{1}{4}$.}
    \label{fig:KW_comparison}
\end{figure}

\begin{remark}[$D^{\KW}$ interpolates between $\KL$ and $\WKL$]
    In \figref{fig:KW_comparison} we observe that $D^{\KW}$ is a smoothing of $\KL$, since $\lim_{\lambda \searrow 0} D^{\KW} = \KL$.
    For $\lambda = 1$ we observe that $D^{\KW}$ interpolates between $\WKL$ and $\KL$, since then
    \begin{equation*}
        \lim_{\sigma^2 \to \infty} \frac{D^{\KW}(\rho \mid \nu)}{\KL(\rho \mid \nu)}
        = 1
        = \lim_{\sigma^2 \searrow 0} \frac{D^{\KW}(\rho \mid \nu)}{\WKL(\rho \mid \nu)}.
    \end{equation*}
    On the other hand, $D^{\KW}$ is non-local.
\end{remark}

\input{optimal_controL_setup}

\section{Conclusions and Future Work}
\label{sec:conclusion}

Motivated by certain shortcomings of the KL divergence as a regularizer, we introduced state-space-aware KL divergences by considering different geometries in the dynamical formulation of the KL.
These novel divergences avoid the low-noise degeneracy of KL and yield improved closed-loop behavior, effortlessly carrying over to other noise models beyond Gaussians.
Having validated our divergences in the fully tractable LQR setting, a natural next step is to integrate them into reinforcement-learning algorithms and empirically evaluate their behavior and benefits in both model-free and model-based RL.

\paragraph{Acknowledgments.}
The authors thank the reviewers for their comments, which improved this submission.
VS thanks his advisor, Gabriele Steidl, for her support and guidance throughout, as well as the organizers of the \href{https://www.tuhh.de/dsf/mml}{\textit{Conference on Mathematics of Machine Learning 2025}} in Hamburg.
The authors acknowledge fruitful conversations with Lorenz Schwachhöfer.
AD and NA thank Takeru Matsuda for insightful discussions, particularly for suggesting extending the WKL divergence formulae to elliptic distributions.
NA thanks Felix Otto for his valuable advice.

\bigskip
\begin{center}
  \FundingLogos
  
  \vspace{0.5em}
  \begin{tcolorbox}\centering\small   
    Funded by the European Union. Views and opinions expressed are, however, those of the authors only and do not necessarily reflect those of the European Union or the European Research Council Executive Agency. Neither the European Union nor the granting authority can be held responsible for them. This project has received funding from the European Research Council (ERC) under the European Union’s Horizon Europe research and innovation programme (grant agreement No. 101198055, project acronym NEITALG).
  \end{tcolorbox}
\end{center}

\section*{Impact Statement}

This paper presents work whose goal is to advance the field of Machine Learning. There are many potential societal consequences of our work, none of which we feel must be specifically highlighted here.

\bibliography{Bibliography}
\bibliographystyle{icml2026}

\newpage
\appendix
\onecolumn

\section*{Appendix}

\paragraph{Notation} 
The $n \times n$ identity matrix is denoted by $I_n$ and the set of symmetric positive definite matrices $\Sym_+(\R; d)$.
The Moore-Penrose inverse of a matrix $A$ is denoted by $A^{\dagger}$.
The orthogonal onto the kernel is $A^{\perp} \coloneqq I - A A^{\dagger}$. 
The symmetric part of a matrix $A$ is denoted by $\Sym(A) \coloneqq \frac{1}{2}(A + A^{\top})$.

We denote the time derivative by a dot, like $\dot{\phi}_t$.
For a Borel measurable map $f \colon M \to Y$ between metric spaces and a probability measure $\mu = \rho \mu_g$ on $M$, its pushforward by $f$ is a probability measure on $Y$, denoted by $f_{\#} \mu$ and defined by $(f_{\#} \mu)(A) \coloneqq \mu(f^{-1}(A))$ for any measurable set $A \subset Y$.
We do not distinguish a probability measure from its density.
We set $\E_{\rho}[f] \coloneqq \int_{M} f \rho \d \mu_g$.
By $\grad$ we denote the Riemannian gradient and by $\div_g$ the Riemannian divergence with respect to the volume measure. 

\paragraph{Structure of the appendix}
First, in \cref{sec:proofs} we present proofs for \cref{sec:explicit_Expressions}, and in \cref{sec:Calculations_Elliptic} we perform simple but cumbersome calculations.
Additionally, we define the Stein metric and compute the associated KL divergence for centered elliptic distributions.
Then, in \cref{appedix:discounted LQR}, we give some background on control theory and prove the related theorems mentioned in the experiments section.

\input{appendix_proofs}
\input{appendix_Elem_computations}
\input{appendix_FR_calculations}
\input{appendix_oc}

\section{Further Directions and Limitations}

One shortcoming of the theory is that we can not offer any criteria ensuring the existence or uniqueness of geodesics.
We conjecture that there exist two smooth elliptic densities with different generators that can not be connected by an $\O$-dual geodesic, but that there are also pairs of elliptic distributions with different generators that can be connected by an $\O$-dual geodesic. 
If true, this would mean that in this geometry, the connected components of the density manifold are difficult to describe.

On the computational side, we acknowledge that outside of structured families, the proposed $G$-divergences generally require approximation.
Possible approaches include sampling-based estimation of the corresponding geometric action, local Gaussian or elliptic approximations, moment-matching schemes, and plug-in approximations based on empirical covariance structure.
Developing scalable estimators and optimization algorithms for nonlinear Markov decision processes is an important direction for future work.

Lastly, we plan to investigate new versions of the mutual information based on the divergences $D^{G}$ and apply the new regularizers to different problems, for example, in imaging, and to VAEs.

\end{document}

%% file: optimal_controL_setup.tex
\section{Optimal Control Problems with Divergence-Based Regularization}
In this section, we study Wasserstein-based KL analogs (WKL and KWKL) as regularizers for stochastic linear--quadratic control. In classical KL-regularized control and path-integral formulations \cite{T2006,todorov2009efficient,K2005}, the quadratic control cost scales with the inverse process noise covariance and becomes singular in the deterministic limit. By contrast, the WKL and KWKL penalties remain finite, yielding well-posed control problems and meaningful low-noise limits.

Consider linear time-invariant dynamics described by
\begin{align}\label{eq: LTI dynamics}
    x_{t+1}=Ax_t + B u_t + w_t, \qquad t \in \N,
\end{align}
where $x_t \in \mathbb{R}^n$ denotes the state, $u_t\in \mathbb{R}^m$ denotes the control input and $w_t \in \mathbb{R}^n$ denotes the process noise.
Let $x_0\sim \mathcal{N}(0,\Sigma_0)$ and assume the process noise $w_t$ consists of independent and identically distributed random variables distributed\footnote{As demonstrated above, we could instead choose $w_t \sim E(0, \Sigma_w, g)$ and everything would work analogously.} according to $\mathcal{N}(0,\Sigma_w)$.
We restrict attention to linear feedback policies $u_t = F x_t$ and want to find the optimal gain matrix $F$ by minimizing the infinite-horizon discounted quadratic state cost
\begin{align*}
    J(F) \coloneqq \mathbb{E}_{w_t \sim \mathcal{N}(0,\Sigma_w)}\left[ \sum_{t=0}^{\infty} \frac{1}{2}\gamma^t x_t^{\tT} Q x_t \right],
\end{align*}
with \textit{discount factor} $\gamma \in (0,1)$ and a user-specified $Q \in \Sym_+(\R; n)$.
To regularize the control effort, we penalize deviations between the controller's conditional state-transition distribution and the uncontrolled conditional state-transition distribution.
Specifically, we add a divergence-based regularization term comparing
\begin{align*}
    p_{t}
    \coloneqq p\left(x_{t+1} \,\middle|\, x_t,u_t\right) &= \mathcal{N}(A x_t + B u_t, \Sigma_w), \\
    p_{0}
    \coloneqq p\left(x_{t+1} \,\middle|\, x_t,u_t=0\right)   &= \mathcal{N}(A x_t, \Sigma_w).
\end{align*}
Computing the divergences yields $D^{\circ}(p_{t} \mid p_{0})=\frac{1}{2} u_t^{\tT}R_{\circ}u_t$, where
\begin{equation} \label{eq:R_circ}
\begin{aligned}
    R_{\FR} &= B^\top \Sigma_w^{-1} B, \qquad
    R_{\O} = B^\top B, \\
    R_{\KW} &= B^\top (\Sigma_w + \lambda I)^{-1} B.
\end{aligned}
\end{equation}
The KL-based regularizer explicitly depends on $\Sigma_w^{-1}$ and thus diverges as $\Sigma_w$ approaches singularity.
In contrast, the KWKL-based and the WKL-based regularizers remain well-defined in this limit.
With these regularization terms added to the state cost, the total cost with divergence-based regularization is
\begin{align*}
   J^{\circ}(F) \coloneqq \mathop{\mathbb{E}}\limits_{w_t \sim \mathcal{N}(0,\Sigma_w)}\Bigg[ \sum_{t=0}^{\infty} \gamma^t \Big( \frac{1}{2} x_t^{\tT} Q x_t + D^{\circ}(p_t \mid p_0) \Big)\Bigg], 
\end{align*}
where $\circ \in \{\FR, \O, \KW \}$.
Since the divergence contributes an additive quadratic term in $u_t$, each formulation reduces to a discounted LQR problem with effective control penalties \eqref{eq:R_circ}.
The problems $\argmin_{F} J^{\circ}(F)$ possess closed-form solutions in the form of solutions to Riccati equations (see Appendix~\ref{appedix:discounted LQR}).

As a consequence of Theorem~\ref{thm:bellman}, we obtain:
\begin{corollary}[Optimal policies]\label{corr:LQR_all}
Consider the LTI system \eqref{eq: LTI dynamics} where $(A,B)$ is stabilizable, $(A,Q^{1/2})$ is detectable, and $B$ has full column rank.
Let
$\lambda > 0$ with $R_{\circ}$ as defined in \eqref{eq:R_circ}. For each $\circ \in \{ \FR, \O, \KW \}$, let $P_\circ \in \Sym_+(\R; n)$ denote the unique solution of the discrete algebraic Riccati equation \eqref{eq:Riccati} with the corresponding $R_{\circ}$.
The optimal linear policy minimizing $J^\circ$ with $\gamma \in (0,1)$ is
\begin{align} \label{eq:optimal_policy}
    F_{\circ} = -\,\gamma\, (R_\circ + \gamma B^\top P_\circ B)^{-1} B^\top P_\circ A.
\end{align}
In particular, $F_{\O}$ does not depend on $\Sigma_w$.
If $\Sigma_w = \rho I$ for some $\rho>0$ and $\lambda=1$, then
\begin{enumerate}
    \item
    $\lim_{\rho \searrow 0} F_{\KW} = F_{\O}$.
    
    \item
    $\lim_{\rho \to \infty}\|F_{\KW}-F_{\FR}\| = 0$.
    
    \item
    Additionally, if the spectral radius of $A$ is less than $\frac{1}{\sqrt{\gamma}}$, then $ \lim_{\rho \searrow 0} F_{\FR} = 0$ while $\lim_{\rho \searrow 0}F_{\KW}=F_{\O}$ is not the zero policy in general.
\end{enumerate}
\end{corollary}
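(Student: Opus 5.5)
The plan is to treat all three regularizers through one discounted LQR template. For each $\circ$, the divergence adds $\frac12 u_t^{\tT}R_\circ u_t$, so $J^\circ$ is a discounted LQR cost with state weight $Q$ and control weight $R_\circ$. Positive definiteness of $R_\circ$ follows from the full column rank of $B$ together with positive definiteness of $\Sigma_w^{-1}$, $I$ and $(\Sigma_w+\lambda I)^{-1}$. Together with stabilizability of $(A,B)$ and detectability of $(A,Q^{1/2})$, these are the hypotheses of Theorem~\ref{thm:bellman}. That theorem gives the unique stabilizing solution $P_\circ$ of \eqref{eq:Riccati}, namely
\[
P=Q+\gamma A^{\tT}PA-\gamma^2A^{\tT}PB(R+\gamma B^{\tT}PB)^{-1}B^{\tT}PA,
\]
and the optimal gain \eqref{eq:optimal_policy}. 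We can reduce to the undiscounted form by replacing $(A,B)$ with $(\sqrt\gamma A,\sqrt\gamma B)$; this preserves stabilizability and detectability. The noise only adds a constant to the value, so it does not affect $F$. Since $R_{\O}=B^{\tT}B$ contains no $\Sigma_w$, neither do $P_{\O}$ nor $F_{\O}$.

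The limits rest on continuity of the stabilizing DARE solution in $R$ on the open cone $R\succ0$, which is standard, for example via the implicit function theorem applied to the Riccati map at a stabilizing solution. Take $\Sigma_w=\rho I$ and $\lambda=1$.
\begin{itemize}
\item \textbf{Item 1.} We have $R_{\KW}=B^{\tT}B/(\rho+1)\to B^{\tT}B=R_{\O}\succ0$, so $P_{\KW}\to P_{\O}$ and hence $F_{\KW}\to F_{\O}$.
\item \textbf{Item 2.} Here both $R_{\KW}=B^{\tT}B/(\rho+1)$ and $R_{\FR}=B^{\tT}B/\rho$ tend to $0$, where continuity alone is unavailable. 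I would write $R=\eps\,B^{\tT}B$ and note that $F(\eps)$ solves the scaled problem. I would then use that $P(\eps)$ is monotone increasing in $\eps$ and stays bounded for bounded $\eps$. Next I would show that $F(\eps)$ stays bounded as $\eps\searrow0$ and that $\|F(\eps_1)-F(\eps_2)\|\to0$ when $\eps_1/\eps_2\to1$ with both small. Since $(\rho+1)/\rho\to1$, this gives item 2. This cheap-control limit is the main obstacle, because $F$ may blow up or fail to converge when $R\to0$. What I would try first is to bound $\|F(\eps_1)-F(\eps_2)\|$ directly from \eqref{eq:optimal_policy}, using the identity $(\eps_1 S+\gamma B^{\tT}PB)^{-1}-(\eps_2S+\gamma B^{\tT}PB)^{-1}$ with $S=B^{\tT}B$. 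Full column rank of $B$ should keep $B^{\tT}P B$ invertible when $P\succeq Q\succ0$, and $P_\circ\succeq Q\succ0$ holds. Because $B^{\tT}P(\eps)B$ is invertible uniformly, the map $\eps\mapsto F(\eps)$ should then even extend continuously to $\eps=0$.
\item \textbf{Item 3.} As $\rho\searrow0$ we get $R_{\FR}=B^{\tT}B/\rho\to\infty$. I would use $F=0$ as a comparison policy. It is admissible in the discounted sense precisely because $\rho(A)<\gamma^{-1/2}$, which makes $\sum\gamma^t A^{t\tT}QA^t$ converge. Thus $P_{\FR}\preceq P_0$, the solution of the Lyapunov equation $P_0=Q+\gamma A^{\tT}P_0A$, uniformly in $\rho$. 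Then in \eqref{eq:optimal_policy} the bracket $R_{\FR}+\gamma B^{\tT}P_{\FR}B$ grows like $B^{\tT}B/\rho$, while $B^{\tT}P_{\FR}A$ stays bounded, so $\|F_{\FR}\|=O(\rho)\to0$.
\end{itemize}
For the final claim, $F_{\O}=-\gamma(B^{\tT}B+\gamma B^{\tT}P_{\O}B)^{-1}B^{\tT}P_{\O}A$ is nonzero whenever $B^{\tT}P_{\O}A\neq0$. A scalar example such as $A=B=Q=1$ exhibits this, since then $P_{\O}\ge1$.
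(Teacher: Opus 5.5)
Your proposal is correct. For the gain formula, the $\Sigma_w$-independence of $F_{\O}$, and item~1 it matches the paper: Theorem~\ref{thm:bellman} plus continuity of the DARE solution on $R\succ 0$, which the paper cites as perturbation analysis \cite{S1998}. Items~2 and~3, however, are argued differently.

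For item~2 the paper only notes $\|R_{\KW}-R_{\FR}\|=\|B^{\tT}B\|/(\rho(\rho+1))\to 0$ and invokes DARE perturbation theory. Since both penalties collapse to the singular matrix $0$, this tacitly needs the perturbation constants to stay bounded as $R\to 0$. In the equivalent form with $G=BR^{-1}B^{\tT}$ the two problems do not even approach each other, because $G_{\KW}-G_{\FR}=B(B^{\tT}B)^{-1}B^{\tT}$ for all $\rho$. Your cheap-control argument supplies exactly this uniformity:
$P(\eps)$ is nondecreasing in $\eps$ and bounded below by $Q\succ 0$, so it converges to some $P(0)\succeq Q$ as $\eps\searrow 0$. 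Full column rank of $B$ makes $B^{\tT}P(0)B$ invertible. Hence $F(\eps)\to -(B^{\tT}P(0)B)^{-1}B^{\tT}P(0)A$, and both $F_{\KW}=F(1/(\rho+1))$ and $F_{\FR}=F(1/\rho)$ tend to this common gain.

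Make that monotone convergence step explicit. Uniform invertibility of $B^{\tT}P(\eps)B$ by itself only bounds $F(\eps)$; it does not make it converge. Once you have convergence, your condition $\eps_1/\eps_2\to 1$ is unnecessary. This step also genuinely uses $Q\succ 0$ from the setup, not merely detectability of $(A,Q^{1/2})$.

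For item~3 the paper argues that $R_{\FR}\to\infty$ drives the Riccati solution to the Lyapunov solution, which is therefore bounded. Your comparison with the zero policy gives $P_{\FR}\preceq P_0$ a priori. That is precisely the boundedness the paper's limit argument implicitly needs, and it yields the explicit rate $\|F_{\FR}\|=O(\rho)$. The paper's route, in addition, identifies $\lim_{\rho\searrow 0}P_{\FR}=P_0$.

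Finally, your scalar example $A=B=Q=1$ satisfies the spectral-radius hypothesis ($1<\gamma^{-1/2}$) and gives an analytic witness that $F_{\O}\neq 0$. The paper leaves this point to the numerical double-integrator example.
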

\begin{proof}
See Appendix~\ref{proof:LQR_all}. 
\end{proof}
\begin{remark}
    It can be shown that if $A$ has eigenvalues with magnitude greater than $\frac{1}{\sqrt{\gamma}}$, then $\lim_{\rho \rightarrow 0}F_{\FR}$ does not equal the zero policy.
    However, if $\alpha<\frac{1}{\sqrt{\gamma}}$ is an eigenvalue of $A$, then $\alpha$ is also an eigenvalue of the closed-loop system.
    Thus, if the open-loop system has slowly decaying modes, these are not stabilized under $F_{\FR}$, yielding degraded closed-loop performance. 
\end{remark}

\subsection{Optimal control with path-integral formulation}

Although our previous result was derived in the LMDP framework \cite{T2006,todorov2009efficient,dvijotham2010inverse,guan2014online}, where control costs are stagewise divergence penalties, the same phenomenon also appears in the path-integral formulation of stochastic optimal control \cite{K2005,theodorou2010learning}.\footnote{LMDP and path-integral formulations coincide for KL regularization, but not necessarily for our divergences, since they do not generally admit a chain-rule decomposition into stagewise costs; see Appendix~\ref{subsec:counter_example}.} In the special case of an identity input matrix, the low-noise degeneracy of path-integral KL regularization already follows from \cite{P2024}. To show that the behavior in Corollary~\ref{corr:LQR_all} is not an artifact of stagewise decomposition, we study a linear--Gaussian finite-horizon setting and show that trajectory-level KL regularization exhibits the same low-noise pathology, while WKL and KWKL remain well posed in the low-noise limit.

We again consider \eqref{eq: LTI dynamics}, 
but now assume a fixed initial state $x_0\in\mathbb{R}^n$.
For a \textit{finite} horizon of length $T$, stack the states, controls, and noises as \(x,u,w\). Then, 
$x =
\mathcal{A} x_0
+
\mathcal{B}_u u
+
\mathcal{B}_w w,$
where the matrices $\mathcal{A}$, $\mathcal{B}_u$ and $\mathcal{B}_w$ are given in Appendix~\ref{subsec:stacked_matrices}.
The uncontrolled trajectory is obtained by setting $u=0$, namely $ x_{uc}=\mathcal{A}x_0  + \mathcal{B}_w w.$
Consider the finite-horizon objective\\
$    V^{\circ}(u) = \mathbb{E}_{w_t \sim \mathcal{N}(0,\Sigma_w)}\Bigg[\frac{1}{2} x^\top \mathcal{Q} x\Bigg]  + D^{\circ}\Big(p(x) \mid p(x_{uc}) \Big). $
This formulation naturally arises in model predictive control (MPC), where such finite-horizon problems are solved repeatedly in a receding-horizon.\\ 
In contrast to the stagewise regularization studied in the previous subsection, we now consider regularization defined through divergences between the controlled and uncontrolled trajectory distributions.
Since $w \sim \mathcal{N}(0,I \otimes \Sigma_w)$, it is straightforward to verify that $D^{\circ}\Big(p(x) \mid p(x_{uc}) \Big)=\frac{1}{2}u^{\tT} \mathcal{R}_{\circ}u$ for appropriate matrices $\mathcal{R}_{\circ}$ give in Appendix~\ref{subsec:control_penalty}.
Thus, the divergences contribute an additive quadratic term in $u$, and each regularizer leads to a finite-dimensional convex quadratic program that can be solved in closed form.
In this setting, it is possible to prove a result analogous to Corollary~\ref{corr:LQR_all}, which is informally stated next, deferring the formal result to Appendix~\ref{subsec:remark52proof}. 
\begin{theorem}[Informal version of Optimal control under path-integral regularization] \label{thm:path_integral_informal}
Let $u_{\circ}$ denote the optimal control sequence minimizing $V^\circ$ for $\circ \in \{ \FR, \O, \KW \}$. 
When $\Sigma_w=\rho I$, $u_{\FR}\xrightarrow{\rho \to 0} 0$ while $u_{\KW}\xrightarrow{\rho \to 0} u_{\O}$ is non-zero in general. Further, $u_{\KW}\xrightarrow{\rho \to \infty} u_{\FR}$.
\end{theorem}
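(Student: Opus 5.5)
Since all three objectives are quadratic, the plan is to obtain the optimal control sequences in closed form and then read off the limits $\rho \to 0$ and $\rho \to \infty$ from the explicit formulas.

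\textbf{Step 1: reduce to a quadratic program.} Substituting $x = \mathcal{A}x_0 + \mathcal{B}_u u + \mathcal{B}_w w$ and using $\E[w]=0$ gives
\[
\E\big[\tfrac12 x^\tT \mathcal{Q} x\big] = \tfrac12 (\mathcal{A}x_0+\mathcal{B}_u u)^\tT \mathcal{Q}(\mathcal{A}x_0+\mathcal{B}_u u) + \tfrac12\tr(\mathcal{Q}\mathcal{B}_w(I\otimes\Sigma_w)\mathcal{B}_w^\tT).
\]
The last term does not depend on $u$.

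\textbf{Step 2: compute the penalty matrices.} The laws $p(x)$ and $p(x_{uc})$ are Gaussians with the same covariance $\Sigma_x=\rho\,\mathcal{B}_w\mathcal{B}_w^\tT$ and mean difference $\mathcal{B}_u u$. Assume $\mathcal{B}_w$ is invertible, which holds for the standard block lower-triangular stacking with identity noise input. Then:
\begin{itemize}
\item the reverse KL gives $\mathcal{R}_\FR=\rho^{-1}\mathcal{B}_u^\tT(\mathcal{B}_w\mathcal{B}_w^\tT)^{-1}\mathcal{B}_u$;
\item the equal-covariance case of \cref{thm:WKL} gives $\mathcal{R}_\O=\mathcal{B}_u^\tT\mathcal{B}_u$, since $R=I$ and $Q=I$;
\item \eqref{eq:KWKL_same-var} gives $\mathcal{R}_\KW=\mathcal{B}_u^\tT(\rho\mathcal{B}_w\mathcal{B}_w^\tT+\lambda I)^{-1}\mathcal{B}_u$.
\end{itemize}
Under full column rank of $\mathcal{B}_u$ each $\mathcal{R}_\circ$ is positive definite, so every $V^\circ$ is strictly convex. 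The unique minimizer is
\[
u_\circ=-(\mathcal{B}_u^\tT\mathcal{Q}\mathcal{B}_u+\mathcal{R}_\circ)^{-1}\mathcal{B}_u^\tT\mathcal{Q}\mathcal{A}x_0 .
\]

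\textbf{Step 3: take the limits.}
\begin{itemize}
\item \emph{KL, $\rho\to0$.} $\mathcal{R}_\FR=\rho^{-1}\mathcal{M}$ with $\mathcal{M}\succ0$ fixed. Hence $u_\FR=-\rho(\rho\mathcal{B}_u^\tT\mathcal{Q}\mathcal{B}_u+\mathcal{M})^{-1}\mathcal{B}_u^\tT\mathcal{Q}\mathcal{A}x_0\to0$.
\item \emph{KWKL, $\rho\to0$.} $\mathcal{R}_\KW\to\lambda^{-1}\mathcal{B}_u^\tT\mathcal{B}_u$ because matrix inversion is continuous. With $\lambda=1$ (as in \cref{corr:LQR_all}) this limit is $\mathcal{R}_\O$. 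Continuity of the inverse in the closed form then gives $u_\KW\to u_\O$. This limit is nonzero whenever $\mathcal{B}_u^\tT\mathcal{Q}\mathcal{A}x_0\neq0$, which is generic.
\item \emph{$\rho\to\infty$.} Both $u_\KW$ and $u_\FR$ tend to the unregularized-limit value $-(\mathcal{B}_u^\tT\mathcal{Q}\mathcal{B}_u)^{-1}\mathcal{B}_u^\tT\mathcal{Q}\mathcal{A}x_0$ if $\mathcal{B}_u^\tT\mathcal{Q}\mathcal{B}_u\succ0$. So $\|u_\KW-u_\FR\|\to0$, matching item 2 of \cref{corr:LQR_all}.
\end{itemize}

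\textbf{Main obstacle.} The difficult point is the $\rho\to\infty$ statement in case $\mathcal{Q}$ is only semidefinite in the relevant directions, so that $\mathcal{B}_u^\tT\mathcal{Q}\mathcal{B}_u$ may be singular. In that case I would instead compare the two penalties directly. Write $S=\mathcal{B}_w\mathcal{B}_w^\tT$. Then
\[
\rho(\rho S+\lambda I)^{-1}-S^{-1}=-\lambda S^{-1}(\rho S+\lambda I)^{-1}=O(\rho^{-1}),
\]
so $\rho\mathcal{R}_\KW-\rho\mathcal{R}_\FR=O(\rho^{-1})$. Multiplying the normal equations by $\rho$ gives $u_\circ=-(\rho\mathcal{B}_u^\tT\mathcal{Q}\mathcal{B}_u+\rho\mathcal{R}_\circ)^{-1}\rho\,\mathcal{B}_u^\tT\mathcal{Q}\mathcal{A}x_0$. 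The task is then to control the difference of the two inverses via the resolvent identity. The step to verify is a uniform lower bound on $\rho\mathcal{R}_\circ$ on the range where $\mathcal{Q}$ degenerates, so that the resolvent difference is $o(\rho^{-1})$ in operator norm there. I would also confirm the invertibility of $\mathcal{B}_w$ from the explicit stacked matrices in Appendix~\ref{subsec:stacked_matrices}.
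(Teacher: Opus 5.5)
Your route matches the paper's. You reduce $V^{\circ}$ to the strictly convex quadratic $\tfrac12 u^{\tT}(\mathcal{R}_{\circ}+\mathcal{B}_u^{\tT}\mathcal{Q}\mathcal{B}_u)u+u^{\tT}\mathcal{B}_u^{\tT}\mathcal{Q}\mathcal{A}x_0+\text{const}$, take the closed-form minimizer, and read off the limits from the penalty matrices with $\lambda=1$. Two of your steps are a bit more explicit than the paper. One is the rescaled formula for $u_{\FR}$. The other is the remark that both $\mathcal{R}_{\KW}$ and $\mathcal{R}_{\FR}$ tend to $0$ as $\rho\to\infty$, so $\mathcal{B}_u^{\tT}\mathcal{Q}\mathcal{B}_u\succ 0$ is what keeps the inverses uniformly bounded. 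The paper's step ``$\|\mathcal{R}_{\KW}-\mathcal{R}_{\FR}\|\to 0$, hence item 2'' uses this without saying so.

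Two corrections.

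First, the WKL step. Equal covariances give $R=I$, hence $(R-I)^{\dagger}=0$ and $Q=0$, not $Q=I$. Also $\log(R)^{\perp}=I$, so $\sqrt{Q+2\log(R)^{\perp}}=\sqrt{2}\,I$ and $D^{\O}=\tfrac12\|\mathcal{B}_u u\|^2$. Taking $Q=I$ literally would give the coefficient $\tfrac34$. Your $\mathcal{R}_{\O}=\mathcal{B}_u^{\tT}\mathcal{B}_u$ is correct, but not for the reason you state.

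Second, the plan in your ``main obstacle'' paragraph does not close as written; this case goes beyond what the paper proves. A uniform lower bound on $\rho\mathcal{R}_{\circ}$ only gives an $O(\rho^{-1})$ operator-norm resolvent difference, and that is in general the true order on $\ker(\mathcal{B}_u^{\tT}\mathcal{Q}\mathcal{B}_u)$. Multiplying by $\rho\,\mathcal{B}_u^{\tT}\mathcal{Q}\mathcal{A}x_0$ therefore only gives boundedness, not convergence.

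The missing ingredient is that $\mathcal{B}_u^{\tT}\mathcal{Q}\mathcal{A}x_0\in\operatorname{range}(\mathcal{B}_u^{\tT}\mathcal{Q}\mathcal{B}_u)$. Hence the unregularized problem has a minimizer $u^*$. Comparing objectives gives $u_{\FR}^{\tT}(\rho\mathcal{R}_{\FR})u_{\FR}\le (u^*)^{\tT}(\rho\mathcal{R}_{\FR})u^*$, so $u_{\FR}$ stays bounded. Then
\[
u_{\KW}-u_{\FR}=\big(\rho\mathcal{B}_u^{\tT}\mathcal{Q}\mathcal{B}_u+\rho\mathcal{R}_{\KW}\big)^{-1}\big(\rho\mathcal{R}_{\FR}-\rho\mathcal{R}_{\KW}\big)\,u_{\FR}=O(\rho^{-1}).
\]
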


\subsection{Example 1: Double integrator}\label{sec:double_int}
As a concrete example, we consider the one-dimensional double integrator system\footnote{A double integrator is a canonical and simplified model of a mobile robot, such as a quadrotor.} \cite{recht2019tour} described by
\begin{align*}
    \begin{bmatrix} q_{t+1}\\ p_{t+1} \end{bmatrix}
    = \begin{bmatrix} 1 & 1 \\ 0 & 1 \end{bmatrix} \begin{bmatrix} q_t \\ p_t \end{bmatrix}
    + \begin{bmatrix} 0 \\ 1 \end{bmatrix} u_t + w_t,
\end{align*}
where $q_t$ is displacement, $p_t$ is velocity, $u_t$ is force and $w_t \sim \mathcal{N}(0, \rho I)$ with variance parameter $\rho \geq 0$.
We set $Q=I$ and $\gamma=0.9$.
The assumptions of Corollary~\ref{corr:LQR_all} hold. 

As $\rho \searrow 0$, KL-regularization makes $J^{\FR}$ dominated by the divergence term, driving $F_{\FR} \rightarrow 0$ and yielding poor closed-loop performance.
Figure~\ref{fig:opt_policies} (and Figure~\ref{fig:opt_policies_vary_lambda}) shows
$F_{\FR} \rightarrow 0$ while $F_{\O}$ and $\F_{\KW}$ remain nonzero ;
for $\lambda=1$, KW interpolates between WKL (low noise) and KL (high noise), consistent with Corollary~\ref{corr:LQR_all} \footnote{The code is available at \href{https://github.com/addat10/Wasserstein-and-Kalman-Wasserstein-Divergences.git}{this GitHub repository}.}.

Closed-loop trajectories in Figure~\ref{fig:traj_all} illustrate the qualitative consequences: for small $\rho$, KL yields large oscillations due to vanishing feedback, whereas WKL and KW produce increasingly well-damped responses.
To assess stability more directly, Figure~\ref{fig:cl-loop-spectral-rad_vary_lambda} plots the spectral radius of $A+BF_{\circ}$ versus $\rho$.
The KL-based closed-loop approaches the unit circle (stability boundary) as $\rho\rightarrow 0$, whereas WKL and KW avoid this pathology; KW again interpolates smoothly between the two regimes as $\lambda$ varies from 1 to 0.
The optimal costs are plotted in Figure~\ref{fig:opt_costs_vary_lambda} in the Appendix~\ref{app:optimal_costs}, which confirms these observations further.
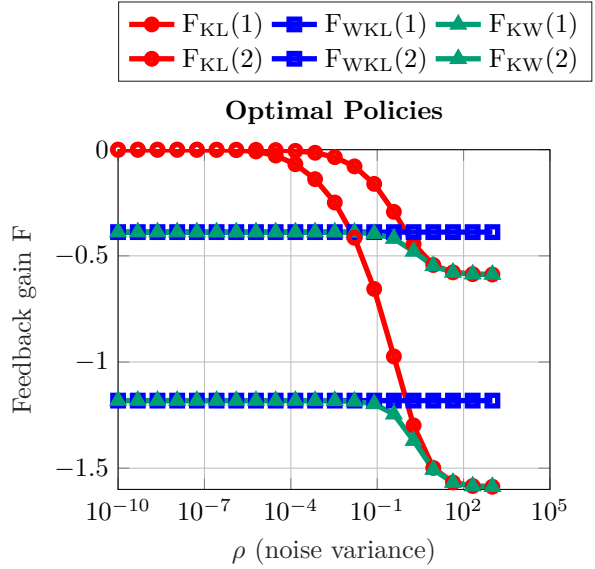
\begin{figure}[]
    \centering
    \input{figures_oc/optimal_policies_vs_rho_new}
    \caption{Optimal feedback gains for $\FR$-, $\O$-, and $\KW$- regularized LQR as function $\rho$.
    Each feedback gain $F\in \mathbb{R}^{1\times 2}$ shown component-wise; both entries are shown. 
    KL gains shrink to zero as noise disappears, WKL gains are constant because they do not depend on $\rho$, and KW gains smoothly interpolate between the two.}
    \label{fig:opt_policies}
\end{figure}
\begin{figure}[]
    \centering
    \input{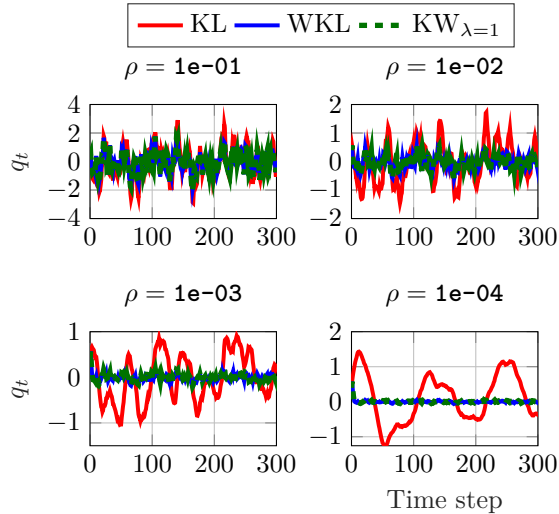}
    \caption{Closed-loop trajectories of the double integrator under {KL-,} {WKL-,} and KW-regularized controllers for $\rho \in \{10^{-1},10^{-2},10^{-3},10^{-4}\}$.
    For small $\rho$, KL yields weak feedback and large oscillations, whereas WKL- and KW-regularized controllers lead to reduced oscillations. See Figure~\ref{fig:traj_all_vary_lambda} for other values of $\lambda$.}
    \label{fig:traj_all}
\end{figure}


\begin{figure}[]
    \centering
    \input{figures_oc/cl_loop_spec_rad_vary_lambda}
    \caption{Closed-loop spectral radius as a function of noise $\rho$ for different values of parameter $\lambda$. KL-regularized controllers approach the unit circle as $\rho\rightarrow 0$, indicating near-instability, while WKL- and KW-regularized controllers maintain spectral radii well below one across noise regimes. 
    Varying $\lambda$ interpolates between KL-like and WKL-like behavior.}
    \label{fig:cl-loop-spectral-rad_vary_lambda}
\end{figure}
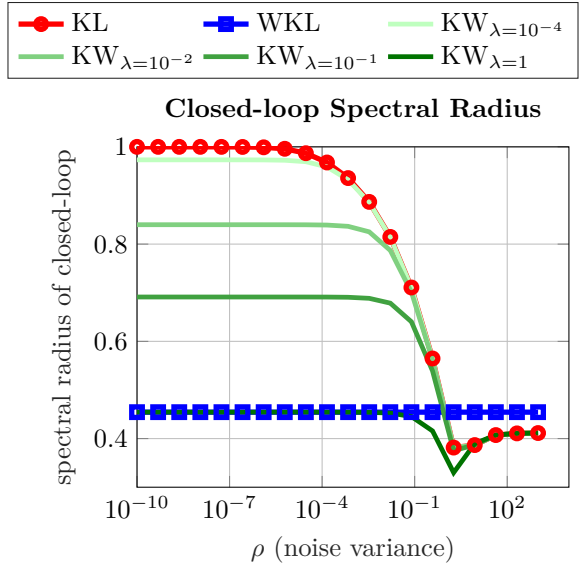

\subsection{Example 2: Cart-pole system}
We consider the cart–pole system, a standard nonlinear control benchmark, and linearize it around the upright equilibrium at the origin. 
An LQR controller is designed using the linearized model. The resulting controller is applied to the full nonlinear dynamics to evaluate performance beyond the linear approximation (see Appendix~\ref{app:cartpole} for details).
As with the double integrator, KL-regularization produces large oscillations, while WKL and KW yield increasingly stable trajectories as $\rho$ decreases, resulting in superior closed-loop performance.
This is illustrated in Figure~\ref{fig:traj_all_cartpole} 
which shows closed-loop trajectories of the nonlinear cart–pole under the LQR controller designed from the linearized model.

\subsection{Example 3: Double integrator with anisotropic noise}
We consider a double-integrator example with anisotropic process noise; the full experimental setup and the corresponding optimal feedback gains are given in Appendix~\ref{app:anisotropic_noise}. The resulting LQR controllers exhibit a clear directional distinction: KL suppresses control in the low-noise dimension, WKL treats both dimensions identically, and KWKL adapts direction-wise, matching WKL in the low-noise direction and KL in the high-noise direction.

\begin{remark}[Other regularizers]
    Many other divergences could be used to penalize the discrepancy between $p_t$ and $p_0$.
    However, even if some (like Jensen-Shannon divergence, squared Hellinger, or total variation, or maximum mean discrepancy) are bounded and thus not blow up for vanishing noise, or are explicitly modified to remain finite for measures with disjoint support \cite{GAG2021,SNRS2025}, they either admit closed forms between elliptic distributions that are not quadratic \cite{NO2024} or do not admit closed form expressions at all.
    For non-quadratic regularizers, the optimal policy $F$ is not available in closed form.
\end{remark}

\begin{remark} While the state-cost $Q$ remains problem-specific, our divergence-based regularization can be interpreted as a principled approach to selecting the control penalty matrix $R$ in LQR design. 
Whereas in classical control, $R$ is typically chosen heuristically, our framework determines an anisotropic control metric that depends on the noise covariance and the input matrix $B$, thereby fixing the relative penalty across the control directions.
\end{remark}

%% file: figures_oc/optimal_policies_vs_rho_new.tex
%
%
\begin{tikzpicture}

\begin{axis}[%
width=0.5*4.521in,
height=0.5*3.559in,
at={(0.758in,0.488in)},
scale only axis,
xmode=log,
xmin=1e-10,
xmax=100000,
xminorticks=true,
xlabel style={font=\color{white!15!black}},
xlabel={$\rho\text{ (noise variance)}$},
ymin=-1.6,
ymax=0,
ylabel style={font=\color{white!15!black}},
ylabel={Feedback gain F},
axis background/.style={fill=white},
title style={font=\bfseries},
xmajorgrids,
xminorgrids,
ymajorgrids,
legend style={at={(0,1.2)}, anchor=south west, legend columns=3, legend cell align=left, align=left, draw=white!15!black}
]
\addplot [color=red, line width=2.0pt, mark=o, mark options={solid, red}]
  table[row sep=crcr]{%
1e-10	-8.0999815563995e-09\\
4.83293023857175e-10	-3.91463041501871e-08\\
2.33572146909012e-09	-1.89183378126937e-07\\
1.12883789168469e-08	-9.14123811628664e-07\\
5.45559478116851e-08	-4.41355835263295e-06\\
2.63665089873036e-07	-2.12304412989078e-05\\
1.27427498570313e-06	-0.000100410304675811\\
6.15848211066027e-06	-0.000445301071187887\\
2.97635144163132e-05	-0.00169394217705902\\
0.000143844988828766	-0.00532614416603124\\
0.000695192796177561	-0.0144282026193295\\
0.00335981828628378	-0.0351602498693721\\
0.0162377673918872	-0.0786383367813075\\
0.0784759970351462	-0.16100192838307\\
0.379269019073225	-0.29270370339896\\
1.83298071083244	-0.444145209610048\\
8.85866790410083	-0.543111956094318\\
42.813323987194	-0.577747466656653\\
206.913808111479	-0.586131461287192\\
1000	-0.587930259487862\\
};
\addlegendentry{$\text{F}_{\text{KL}}\text{(1)}$}

\addplot [color=blue, line width=2.0pt, mark=square, mark options={solid, blue}]
  table[row sep=crcr]{%
1e-10	-0.388181584816695\\
4.83293023857175e-10	-0.388181584816695\\
2.33572146909012e-09	-0.388181584816695\\
1.12883789168469e-08	-0.388181584816695\\
5.45559478116851e-08	-0.388181584816695\\
2.63665089873036e-07	-0.388181584816695\\
1.27427498570313e-06	-0.388181584816695\\
6.15848211066027e-06	-0.388181584816695\\
2.97635144163132e-05	-0.388181584816695\\
0.000143844988828766	-0.388181584816695\\
0.000695192796177561	-0.388181584816695\\
0.00335981828628378	-0.388181584816695\\
0.0162377673918872	-0.388181584816695\\
0.0784759970351462	-0.388181584816695\\
0.379269019073225	-0.388181584816695\\
1.83298071083244	-0.388181584816695\\
8.85866790410083	-0.388181584816695\\
42.813323987194	-0.388181584816695\\
206.913808111479	-0.388181584816695\\
1000	-0.388181584816695\\
};
\addlegendentry{$\text{F}_{\text{WKL}}\text{(1)}$}

\addplot [
  color={rgb,255:red,0; green,158; blue,115},
  line width=2pt,
  mark=triangle*,
  mark options={
    solid,
    color={rgb,255:red,0; green,158; blue,115}
  }
]
  table[row sep=crcr]{%
1e-10	-0.388181584826399\\
4.83293023857175e-10	-0.388181584863593\\
2.33572146909012e-09	-0.388181585043348\\
1.12883789168469e-08	-0.388181585912094\\
5.45559478116851e-08	-0.388181590110684\\
2.63665089873036e-07	-0.388181610402174\\
1.27427498570313e-06	-0.388181708469459\\
6.15848211066027e-06	-0.388182182420259\\
2.97635144163132e-05	-0.38818447295521\\
0.000143844988828766	-0.38819554210537\\
0.000695192796177561	-0.38824901879661\\
0.00335981828628378	-0.388507007815358\\
0.0162377673918872	-0.389743210761539\\
0.0784759970351462	-0.395480603344219\\
0.379269019073225	-0.418700470275489\\
1.83298071083244	-0.479179975653283\\
8.85866790410083	-0.547085251727049\\
42.813323987194	-0.577981892995957\\
206.913808111479	-0.586142301410748\\
1000	-0.587930731316119\\
};
\addlegendentry{$\text{F}_{\text{KW}}\text{(1)}$}

\addplot [color=red, line width=2.0pt, mark=o, mark options={solid, red}]
  table[row sep=crcr]{%
1e-10	-1.62899404038067e-07\\
4.83293023857175e-10	-7.87270416180562e-07\\
2.33572146909012e-09	-3.80456518515801e-06\\
1.12883789168469e-08	-1.83811802822263e-05\\
5.45559478116851e-08	-8.86948513497674e-05\\
2.63665089873036e-07	-0.000425433201153238\\
1.27427498570313e-06	-0.0019859516003225\\
6.15848211066027e-06	-0.00836480313284458\\
2.97635144163132e-05	-0.0276706333017854\\
0.000143844988828766	-0.0686321209239194\\
0.000695192796177561	-0.138763661576226\\
0.00335981828628378	-0.248761693205869\\
0.0162377673918872	-0.41480269291227\\
0.0784759970351462	-0.655854502247144\\
0.379269019073225	-0.973777343173139\\
1.83298071083244	-1.29857242861877\\
8.85866790410083	-1.49884742408568\\
42.813323987194	-1.56743420653651\\
206.913808111479	-1.58393759439203\\
1000	-1.58747363645657\\
};
\addlegendentry{$\text{F}_{\text{KL}}\text{(2)}$}

\addplot [color=blue, line width=2.0pt, mark=square, mark options={solid, blue}]
  table[row sep=crcr]{%
1e-10	-1.18173454473589\\
4.83293023857175e-10	-1.18173454473589\\
2.33572146909012e-09	-1.18173454473589\\
1.12883789168469e-08	-1.18173454473589\\
5.45559478116851e-08	-1.18173454473589\\
2.63665089873036e-07	-1.18173454473589\\
1.27427498570313e-06	-1.18173454473589\\
6.15848211066027e-06	-1.18173454473589\\
2.97635144163132e-05	-1.18173454473589\\
0.000143844988828766	-1.18173454473589\\
0.000695192796177561	-1.18173454473589\\
0.00335981828628378	-1.18173454473589\\
0.0162377673918872	-1.18173454473589\\
0.0784759970351462	-1.18173454473589\\
0.379269019073225	-1.18173454473589\\
1.83298071083244	-1.18173454473589\\
8.85866790410083	-1.18173454473589\\
42.813323987194	-1.18173454473589\\
206.913808111479	-1.18173454473589\\
1000	-1.18173454473589\\
};
\addlegendentry{$\text{F}_{\text{WKL}}\text{(2)}$}

\addplot [
  color={rgb,255:red,0; green,158; blue,115},
  line width=2pt,
  mark=triangle*,
  mark options={
    solid,
    color={rgb,255:red,0; green,158; blue,115}
  }
]
  table[row sep=crcr]{%
1e-10	-1.18173454475642\\
4.83293023857175e-10	-1.18173454483513\\
2.33572146909012e-09	-1.18173454521553\\
1.12883789168469e-08	-1.18173454705397\\
5.45559478116851e-08	-1.18173455593905\\
2.63665089873036e-07	-1.18173459887997\\
1.27427498570313e-06	-1.18173480641029\\
6.15848211066027e-06	-1.18173580938644\\
2.97635144163132e-05	-1.18174065662009\\
0.000143844988828766	-1.18176408109146\\
0.000695192796177561	-1.18187724625174\\
0.00335981828628378	-1.18242314764071\\
0.0162377673918872	-1.18503792114181\\
0.0784759970351462	-1.19715169223015\\
0.379269019073225	-1.24582776498641\\
1.83298071083244	-1.37028884569247\\
8.85866790410083	-1.50675042853273\\
42.813323987194	-1.56789616498359\\
206.913808111479	-1.58395890870954\\
1000	-1.58747456374797\\
};
\addlegendentry{$\text{F}_{\text{KW}}\text{(2)}$}

\end{axis}
\end{tikzpicture}%

%% file: figures_oc/cl_loop_spec_rad_vary_lambda.tex
%
%
\definecolor{mycolor1}{rgb}{0.50000,0.81667,0.50000}%
\definecolor{mycolor2}{rgb}{0.25000,0.63333,0.25000}%
\begin{tikzpicture}

\begin{axis}[%
width=0.5*4.521in,
height=0.5*3.566in,
at={(0.758in,0.481in)},
scale only axis,
xmode=log,
xmin=1e-10,
xmax=10000,
xminorticks=true,
xlabel style={font=\color{white!15!black}},
xlabel={$\rho\text{ (noise variance)}$},
ymin=0.3,
ymax=1,
ylabel style={font=\color{white!15!black}},
ylabel={spectral radius of closed-loop},
axis background/.style={fill=white},
title style={font=\bfseries},
xmajorgrids,
xminorgrids,
ymajorgrids,
legend style={at={(-0.3,1.2)}, anchor=south west, legend columns=3, legend cell align=left, align=left, draw=white!15!black}
]
\addplot [color=red, line width=2.0pt, mark=o, mark options={solid, red}]
  table[row sep=crcr]{%
1e-10	0.999999922600286\\
4.83293023857175e-10	0.999999625937874\\
2.33572146909012e-09	0.999998192307463\\
1.12883789168469e-08	0.999991266433627\\
5.45559478116851e-08	0.999957858465547\\
2.63665089873036e-07	0.99979787819346\\
1.27427498570313e-06	0.99905678452446\\
6.15848211066027e-06	0.996032377956833\\
2.97635144163132e-05	0.986926192212606\\
0.000143844988828766	0.96782954245162\\
0.000695192796177561	0.935769491404322\\
0.00335981828628378	0.886791157298889\\
0.0162377673918872	0.814761096192643\\
0.0784759970351462	0.710737241275513\\
0.379269019073225	0.564735655174898\\
1.83298071083244	0.381540012307062\\
8.85866790410083	0.386678984762735\\
42.813323987194	0.407241089272753\\
206.913808111479	0.410720902702405\\
1000	0.411416483288659\\
};
\addlegendentry{$\KL$}

\addplot [color=blue, line width=2.0pt, mark=square, mark options={solid, blue}]
  table[row sep=crcr]{%
1e-10	0.454364435316859\\
4.83293023857175e-10	0.454364435316859\\
2.33572146909012e-09	0.454364435316859\\
1.12883789168469e-08	0.454364435316859\\
5.45559478116851e-08	0.454364435316859\\
2.63665089873036e-07	0.454364435316859\\
1.27427498570313e-06	0.454364435316859\\
6.15848211066027e-06	0.454364435316859\\
2.97635144163132e-05	0.454364435316859\\
0.000143844988828766	0.454364435316859\\
0.000695192796177561	0.454364435316859\\
0.00335981828628378	0.454364435316859\\
0.0162377673918872	0.454364435316859\\
0.0784759970351462	0.454364435316859\\
0.379269019073225	0.454364435316859\\
1.83298071083244	0.454364435316859\\
8.85866790410083	0.454364435316859\\
42.813323987194	0.454364435316859\\
206.913808111479	0.454364435316859\\
1000	0.454364435316859\\
};
\addlegendentry{$\WKL$}

\addplot [color=white!75!green, line width=1.8pt]
  table[row sep=crcr]{%
1e-10	0.973276362751891\\
4.83293023857175e-10	0.973276308811906\\
2.33572146909012e-09	0.973276048125618\\
1.12883789168469e-08	0.973274788291275\\
5.45559478116851e-08	0.973268700634298\\
2.63665089873036e-07	0.973239303543764\\
1.27427498570313e-06	0.973097789460873\\
6.15848211066027e-06	0.972426547249672\\
2.97635144163132e-05	0.96944095161934\\
0.000143844988828766	0.95869265607596\\
0.000695192796177561	0.932321149658316\\
0.00335981828628378	0.885683934468753\\
0.0162377673918872	0.814424570072095\\
0.0784759970351462	0.710637302852563\\
0.379269019073225	0.56470748080527\\
1.83298071083244	0.381533464043241\\
8.85866790410083	0.386679337997461\\
42.813323987194	0.40724109982457\\
206.913808111479	0.410720903128727\\
1000	0.411416483306701\\
};
\addlegendentry{$\KW_{\lambda=10^{-4}}$}

\addplot [color=mycolor1, line width=1.8pt]
  table[row sep=crcr]{%
1e-10	0.839858247064877\\
4.83293023857175e-10	0.839858245191584\\
2.33572146909012e-09	0.839858236138091\\
1.12883789168469e-08	0.83985819238321\\
5.45559478116851e-08	0.83985798091934\\
2.63665089873036e-07	0.839856958939057\\
1.27427498570313e-06	0.839852020009628\\
6.15848211066027e-06	0.839828155878123\\
2.97635144163132e-05	0.839712947404639\\
0.000143844988828766	0.839159053270267\\
0.000695192796177561	0.836547262816651\\
0.00335981828628378	0.825206325843695\\
0.0162377673918872	0.786952813931755\\
0.0784759970351462	0.701201339055591\\
0.379269019073225	0.561948717570719\\
1.83298071083244	0.380887014027042\\
8.85866790410083	0.386714257487971\\
42.813323987194	0.40724214420135\\
206.913808111479	0.41072094533255\\
1000	0.411416485092998\\
};
\addlegendentry{$\KW_{\lambda=10^{-2}}$}

\addplot [color=mycolor2, line width=1.8pt]
  table[row sep=crcr]{%
1e-10	0.691206508833741\\
4.83293023857175e-10	0.691206508516699\\
2.33572146909012e-09	0.691206506984454\\
1.12883789168469e-08	0.691206499579217\\
5.45559478116851e-08	0.69120646379024\\
2.63665089873036e-07	0.691206290824776\\
1.27427498570313e-06	0.691205454898755\\
6.15848211066027e-06	0.691201415020067\\
2.97635144163132e-05	0.691181892750489\\
0.000143844988828766	0.691087593913536\\
0.000695192796177561	0.690633039017742\\
0.00335981828628378	0.688463353416795\\
0.0162377673918872	0.678557886986101\\
0.0784759970351462	0.640249336788647\\
0.379269019073225	0.539277171240414\\
1.83298071083244	0.375170777321309\\
8.85866790410083	0.38702716631897\\
42.813323987194	0.407251615598716\\
206.913808111479	0.410721328817168\\
1000	0.411416501330442\\
};
\addlegendentry{$\KW_{\lambda=10^{-1}}$}

\addplot [color=black!55!green, line width=1.8pt]
  table[row sep=crcr]{%
1e-10	0.45436443530494\\
4.83293023857175e-10	0.454364435259254\\
2.33572146909012e-09	0.454364435038458\\
1.12883789168469e-08	0.454364433971366\\
5.45559478116851e-08	0.454364428814183\\
2.63665089873036e-07	0.454364403889882\\
1.27427498570313e-06	0.454364283432545\\
6.15848211066027e-06	0.454363701272253\\
2.97635144163132e-05	0.454360887769971\\
0.000143844988828766	0.45434729119244\\
0.000695192796177561	0.454281600491231\\
0.00335981828628378	0.453964602336619\\
0.0162377673918872	0.452443686683465\\
0.0784759970351462	0.445341342246674\\
0.379269019073225	0.415779635491055\\
1.83298071083244	0.329986560273013\\
8.85866790410083	0.389764443913476\\
42.813323987194	0.40734411036146\\
206.913808111479	0.410725145270723\\
1000	0.411416663544042\\
};
\addlegendentry{$\KW_{\lambda=1}$}

\end{axis}
\end{tikzpicture}%

%% file: appendix_proofs.tex
\section{Proofs} \label{sec:proofs}
\allowdisplaybreaks

\subsection{Proof of \cref{thm:characterizing_geodesics}} \label{subsec:gradient_flows_geodesics_proof}

\subsection{Proof of \cref{thm:DG_prop}} \label{subsec:DG_prop_proof}

\begin{proof}
    \begin{enumerate}
        \item 
        Since the infimum is taken over a nonnegative quantity, we must have $D^G \ge 0$.
        If $\rho = \nu$, then the constant curve $\gamma(t) = \rho$ is a $G$-dual geodesic, since this curve solves $\dot{\gamma}(t) = \phi_{\gamma(t)}^{G}([0])$.
        Hence, $D^G(\rho \mid \rho) = 0$.

        If $D^G(\rho \mid \nu) = 0$, then by assumption there is a $G$-dual geodesic $\gamma$ connecting $\rho$ to $\nu$ such that $\int_{0}^{1} t g^G_{\gamma(t)}(\dot{\gamma}(t), \dot{\gamma}(t)) \d t = 0$.
        Hence, $g^G_{\gamma(t)}(\dot{\gamma}(t), \dot{\gamma}(t)) = 0$ for almost all $t \in [0, 1]$, so $\dot{\gamma}(t) = 0$ for almost all $t$.
        Hence, $\gamma$ is constant, implying $\rho = \nu$.

        If there is no $G$-dual geodesic connecting $\mu$ and $\nu$, then $D^G(\mu \mid \nu) = \infty$.

        The asymmetric and lack of triangle inequality are well known for the KL divergence, i.e., $G = \FR$.

        \item 
        For two Gaussians $\mu \sim \NN(m, \Sigma)$ and $\mu_1 \sim \NN(m_1, \Sigma_1)$, we have $\mu \otimes \mu_1 \sim \NN((m, m_1), \diag(\Sigma, \Sigma_1))$ and analogously for $\nu \otimes \nu_1$.
        To prove this statement, we only have to prove that the terms appearing in this formula act blockwise on block-diagonal matrices.
        This is, of course, true for the trace operator, for the pseudo inverse $\dagger$, the squared Euclidean norm, and the quadratic forms involved.
        If the covariance matrices are simultaneously diagonalizable, as \cref{thm:KWKL} requires, then this also holds for all the terms appearing in the formula for the KWKL divergence.

        \item
        See \citep[Subsec.~3.4]{DA2026} for the WKL.
        Since the formula only differs by the positive factor $\kappa_g$ at certain places, the continuity arguments are identical.

        \item 
        This statement is a sort of \enquote{data-processing equality}, showing that $D^{G}$ inherits invariance properties from the underlying Onsager operator $\phi^{G}$, e.g., that the Wasserstein-KL divergence is invariant under rigid transformations applied to both input measures simultaneously.
        In contrast, the Chentsov uniqueness theorem makes it very unlikely that state-space-aware KL divergences satisfy a data-processing inequality like the KL.
        
        \begin{lemma}
            If the Onsager operator is equivariant under the action of a Riemannian isometry $\Phi \in \Isom(M, g)$, in the sense that
            \begin{equation} \label{eq:equivariance_Onsager}
                \Phi_\# \phi_{\mu}^{G}([f])
                = \phi_{\Phi_\# \mu}^{G}([f \circ \Phi^{-1}])
                \qquad  \forall \mu \in \P_{+}^{\infty}(M), \forall [f] \in \C^{\infty}(M) / \R,
            \end{equation}
            then $D^{G}(\Phi_\# \mu \mid \Phi_\# \nu) = D^G(\mu \mid \nu)$ for all $\mu, \nu \in \P_{+}^{\infty}(M)$.
        \end{lemma}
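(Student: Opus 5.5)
The plan is to show that the pushforward map $\gamma \mapsto \Phi_\# \circ \gamma$ is a bijection between $G$-dual geodesics from $\mu$ to $\nu$ and $G$-dual geodesics from $\Phi_\#\mu$ to $\Phi_\#\nu$, and that it preserves the action $\int_0^1 t\, g^G_{\gamma(t)}(\dot\gamma(t),\dot\gamma(t))\d t$. Taking infima over the two sets of geodesics then gives the equality of the divergences. The bijectivity of the map is immediate, since $\Phi^{-1}$ is also an isometry and \eqref{eq:equivariance_Onsager} holds for it as well, applied to $\Phi_\#\mu$.

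\textbf{Step 1: pushforward is linear and commutes with time derivatives.} Since $\Phi$ is a Riemannian isometry, it preserves $\mu_g$. Hence, on densities, $\Phi_\#\rho = \rho\circ\Phi^{-1}$. This map is linear and continuous on $\C^\infty(M)$, maps $\S_0^\infty(M)$ to itself, and satisfies $\frac{\d}{\d t}\Phi_\#\gamma(t) = \Phi_\#\dot\gamma(t)$.

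\textbf{Step 2: geodesics are mapped to geodesics.} Let $\gamma$ be a $G$-dual geodesic with $\dot\gamma(t) = \phi^G_{\gamma(t)}([f])$, where $[f] = (\phi^G_{\rho})^{-1}(a)$ is fixed in time, as in \eqref{eq:G-dual_geodesic}. Set $\tilde\gamma \coloneqq \Phi_\#\gamma$. By Step 1 and \eqref{eq:equivariance_Onsager},
\[
\dot{\tilde\gamma}(t) = \Phi_\#\phi^G_{\gamma(t)}([f]) = \phi^G_{\tilde\gamma(t)}([f\circ\Phi^{-1}]).
\]
Since $[f\circ\Phi^{-1}]$ is also constant in time, $\tilde\gamma$ is a $G$-dual geodesic. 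Its endpoints are $\tilde\gamma(0)=\Phi_\#\mu$ and $\tilde\gamma(1)=\Phi_\#\nu$. Equivalently, one can phrase this via \cref{thm:characterizing_geodesics}: $\tilde\gamma$ is the gradient flow of $-\E_\cdot[f\circ\Phi^{-1}]$.

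\textbf{Step 3: the action is invariant.} Write $\dot{\gamma}(t) = \phi^G_{\gamma(t)}([f])$ and use the cotangent form of the metric:
\[
g^G_{\gamma(t)}(\dot\gamma,\dot\gamma) = \big\langle \phi^G_{\gamma(t)}([f]),\, f - \E_{\gamma(t)}[f]\big\rangle_{L^2(\mu_g)}.
\]
For $\tilde\gamma$, the corresponding quantity is
\[
\big\langle \Phi_\#\phi^G_{\gamma(t)}([f]),\, f\circ\Phi^{-1} - \E_{\Phi_\#\gamma(t)}[f\circ\Phi^{-1}]\big\rangle_{L^2(\mu_g)}.
\]
Two facts show these agree. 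First, $\E_{\Phi_\#\gamma(t)}[f\circ\Phi^{-1}] = \E_{\gamma(t)}[f]$. Second, the change of variables $x=\Phi(y)$, using $\Phi_\#\mu_g=\mu_g$, turns $\int (h\circ\Phi^{-1})(k\circ\Phi^{-1})\d\mu_g$ into $\int hk\d\mu_g$. Hence the integrands agree for every $t$, and so do the time-weighted integrals.

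\textbf{Main obstacle.} The delicate point is the identification of the cotangent representative. Elements of $\C^\infty(M)/\R$ are defined only up to constants, and $(\phi^G)^{-1}$ is only defined on the image. I will handle this by noting that $[f]\mapsto[f\circ\Phi^{-1}]$ is well defined on equivalence classes and by using injectivity of $\phi^G_{\tilde\gamma(t)}$. These two facts guarantee that the covector associated with $\dot{\tilde\gamma}$ is exactly $[f\circ\Phi^{-1}]$, which is what Step 3 uses.

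A second technical issue is measure-preservation of $\Phi$. It holds because $\Phi$ is an isometry of $g$, and it is needed so that pushforward acts on densities by composition.
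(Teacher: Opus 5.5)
Your proposal is correct and follows essentially the same route as the paper: you push forward dual geodesics using the equivariance hypothesis, show the action integrand is preserved via invariance of expectations (the paper's equivariance of $\Psi$) and the change of variables from $\Phi_\#\mu_g=\mu_g$, and then close the argument using $\Phi^{-1}$. The only difference is cosmetic: you phrase the conclusion as an action-preserving bijection between the two sets of geodesics, whereas the paper proves $D^G(\Phi_\#\mu\mid\Phi_\#\nu)\le D^G(\mu\mid\nu)$ and gets the reverse inequality by replacing $\Phi$ with $\Phi^{-1}$. Both versions rely on the fact, which you note, that the equivariance hypothesis for $\Phi$ at all $\mu$ implies it for $\Phi^{-1}$.
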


        \begin{proof}
            For $\mu \in \P_+^{\infty}(M)$ we write $\mu = \rho \mu_g$.
            Since $\Phi \in \Isom(M, g)$, we have $\Phi_\# \mu_g = \mu_g$ and thus
            \begin{equation*}
                \Phi_{\#} \mu
                = (\rho \circ \Phi^{-1}) \mu_g.
            \end{equation*}
            In particular, $\Phi_{\#}(\S_0^{\infty}(M)) \subset \S_0^{\infty}(M)$, since
            \begin{equation*}
               \big(\Phi_\#(f \mu_g)\big)(M)
               = \int_{M} (f \circ \Phi^{-1}) \d\mu_g
               = \int_{M} f \d \mu_g
               = 0
            \end{equation*}	
            for all $f \in \C^{\infty}(M)$.
           
            We divide the remainder of the proof into four steps.
            \begin{enumerate}
               \item 
               \textit{Equivariance of $\Psi$.}
               For $f \in \C^{\infty}(M)$ we have
               \begin{equation*}
                   \E_{\Phi_\# \mu}[f \circ \Phi^{-1}]
                   = \int_{M} f \circ \Phi^{-1} \d(\Phi_\# \mu)
                   = \int_{M} f \d \mu 
                   = \E_{\mu}[f],
               \end{equation*}
               so that
               \begin{align*}
                   \Psi_{\Phi_\# \mu}([f \circ \Phi^{-1}])
                   = f \circ \Phi^{-1} - \E_{\Phi_\# \mu}[f \circ \Phi^{-1}]
                   = \left( f - \E_{\mu}[f]\right) \circ \Phi^{-1}
                   = \Psi_{\mu}([f]) \circ \Phi^{-1}.
               \end{align*}

               \item 
               \textit{Invariance of $g^G$.}
               Let $\mu \in \P_+^{\infty}(M)$ and $a = \phi_\mu^G([f]) \in \S_0^{\infty}(M)$ and $b = \phi_{\mu}^{G}([g]) \in \S_0^{\infty}(M)$ for $[f], [g] \in \C^{\infty}(M) / \R$.
               Then,
               \begin{align*}
                   g_{\Phi_\# \mu}(\Phi_\# a, \Phi_\# b)
                   = \left\langle \Phi_\# a, \Psi_{\Phi_\# \mu}\left( (\phi_{\Phi_\# \mu}^{G})^{-1}(\Phi_\# b)\right) \right\rangle_{L^2(M, \mu_g)}.
               \end{align*}
               By \eqref{eq:equivariance_Onsager},
               \begin{equation*}
                   (\phi_{\Phi_\# \mu}^{G})^{-1}(\Phi_\# b)
                   = (\phi_{\Phi_\# \mu}^{G})^{-1}(\Phi_\# \phi_{\mu}^{G}([g]))
                   = [g \circ \Phi^{-1}],
               \end{equation*}
               so, using the equivariance of $\Psi$, that the above term simplifies to
               \begin{equation*}
                   \int_{M} \left(\phi_\mu^G([f]) \circ \Phi^{-1} \right) \cdot \left( \Psi_{\mu}([g]) \circ \Phi^{-1}\right) \d \mu_g 
                   = \int_{M} \phi_\mu^G([f]) \cdot \Psi_{\mu}([g]) \d \mu_g 
                   = g_{\mu}(a, b).
               \end{equation*}
               This shows that $\Phi_{\#}$ is an isometry of $(P_+^{\infty}, g^G)$.
               
               \item 
               \textit{Invariance of dual geodesics}
               Let $\gamma$ be a dual geodesic from $\mu$ to $\nu$, set $a \coloneqq \phi_{\mu}^{G}([f])$, and let $\tilde{\gamma} \coloneqq \Phi_\# \gamma$.
               Then, for $t \in \R$, we have
               \begin{align*}
                   \dot{\tilde{\gamma}}(t)
                   & = \Phi_{\#} \dot{\gamma}(t)
                   = \Phi_{\#} \phi_{\gamma(t)}^{G}\left(\left(\phi_{\mu}^{G})^{-1}(a)\right)\right)
                   \overset{\eqref{eq:equivariance_Onsager}}{=} \phi_{\Phi_\# \gamma(t)}^{G}([f \circ \Phi^{-1}]) \\
                   \overset{\eqref{eq:equivariance_Onsager}}&{=} \phi_{\tilde{\gamma}(t)}^{G}\left(\left(\phi_{\Phi_\# \mu}^{G}\right)^{-1}(\Phi_{\#} \phi_{\mu}^{G}[f])\right)
                   = \phi_{\tilde{\gamma}(t)}^{G}\left(\left(\phi_{\Phi_\# \mu}^{G}\right)^{-1}(\Phi_{\#} a)\right),
               \end{align*}
               so $\tilde{\gamma}$ is the dual $G$-geodesic starting at $\Phi_{\#} \mu$ with initial velocity $\Phi_\# a$ and taking the value $\Phi_\# \nu$ at $t = 1$.

               \item 
               \textit{Equality of divergences.}
               Take $\mu, \nu \in \P_+^{\infty}(M)$.
               For any $G$-dual geodesic from $\mu$ to $\nu$, let $\tilde{\gamma} \coloneqq \Phi_\# \gamma$.
               Then, for all $t \in [0, 1]$,
               \begin{equation*}
                   g_{\tilde{\gamma}(t)}^{G}\big(\dot{\tilde{\gamma}}(t), \dot{\tilde{\gamma}}(t)\big)
                   = g_{\Phi_\# \gamma(t)}\big( \Phi_{\#} \dot{\gamma}(t), \Phi_{\#} \dot{\gamma}(t)\big)
                   = g_{\gamma(t)}(\dot{\gamma}(t), \dot{\gamma}(t)\big).
               \end{equation*}
               and so taking the infimum over all such $G$-dual geodesics yields
               \begin{equation*}
                   D^{G}(\Phi_\# \mu \mid \Phi_\# \nu) \le D^G(\mu \mid \nu).
               \end{equation*}
               Replacing $\Phi$ by $\Phi^{-1}$ in the preceding discussion yields the reverse inequality and thus the desired statement.
            \end{enumerate}
        \end{proof}

        \begin{example}[Invariant subgroups of isometries]
            We have $\Isom(\S^{d - 1}) = O(d)$.
            For the formal extension to $M = \R^d$, we have $\Isom(\R^d) = \R^d \rtimes O(d)$ (the Euclidean group).
            \begin{enumerate}
                \item 
                \textit{Fisher-Rao metric.}
                We have seen above that for $G = \FR$, \eqref{eq:equivariance_Onsager} holds for any isometry $\Phi$.

                \item 
                \textit{Otto metric.}
                For any $g \in \C^{\infty}(M)$ we have
                \begin{align*}
                    \int_{M} g \d\left[ \Phi_\# \phi_\mu^O([f])\right]
                    & = - \int_M g \circ \Phi \div_{\mu}(\grad f) \d{\mu}
                    = \int_{M} \langle \grad (g \circ \Phi), \grad(f) \rangle \d \mu \\
                    & = \int_{M} \langle \grad(g) \circ \Phi, \grad(f \circ \Phi^{-1}) \circ \Phi \rangle \d{\mu} \\
                    & = \int_{M} \langle \grad(g), \grad(f \circ \Phi^{-1}) \rangle \d{\Phi_\#\mu} \\
                    & = - \int_{M} g \div_{\Phi_\# \mu}(\grad(f \circ \Phi^{-1})) \d {\Phi_{\#} \mu} \\
                    & = \int_{M} g \d\left[ \phi_{\Phi_\# \mu}^{O}([f \circ \Phi^{-1}])\right],
                \end{align*}
                so, again \eqref{eq:equivariance_Onsager} holds for all isometries $\Phi \in \Isom(M, g)$.

                \item 
                If the mobility function is indeed a scalar, then the same reasoning applies to the Otto metric with non-linear mobility.

                \item 
                For the (normalized) Stein geometry on $M = \R^d$, we can only take $\Phi \in \Isom(\R^d)$ that preserve the kernel, that is, those with $K(\Phi x, \Phi y) = K(x, y)$ for all $x, y \in \R^d$.
                If $K = \phi(\| \cdot - \cdot \|_2^2)$ is radial, then this is fulfilled.

                For the generalized bilinear kernel, we have to restrict to $\Phi(x) = O x + b$ with $O^{\tT} A O = A$ (stabilizers of $A$).

                \item 
                For Sinkhorn-type geometries built using the kernel $e^{-\frac{1}{\eps} c(x, y)}$, we can only take isometries that preserve the cost $c$.
                If $c$ is a squared metric, then these are all the isometries.\qedhere
            \end{enumerate}
        \end{example}\qedhere

    \end{enumerate}
\end{proof}
    
\subsection{Proof of the formula for the linearized Wasserstein KL divergence \cref{thm:lW_KLD}} \label{sec:linW_KLD}

\begin{proof}
    The $\dot{H}^{-1}$ gradient flow of the potential energy $\mu \mapsto \E_{\mu}[f]$ reads $\dot{\rho}_t = - \Delta f$, so $\rho_t = \rho - t \Delta f$.
    If we let $t = 1$, then we obtain the inhomogeneous Poisson equation $- \Delta f = \nu - \rho$, whose solution is $f = \Phi \ast (\nu - \rho)$.
    Then, \eqref{eq:divergence_short_formula} becomes
    \begin{align*}
        D^{\dot{H}^{-1}}(\mu \mid \nu)
        & = \int_{\R^d} f(x) \nu(x) \d{x}
        - \int_{0}^{1} \int_{\R^d} f(x) \left(\rho(x) - t \Delta f(x)\right) \d{x} \d{t} \\
        & = \int_{\R^d} f(x) \big(\nu(x) - \rho(x)\big) \d{x}
        + \int_{0}^{1} t \int_{\R^d} f(x) \Delta f(x) \d{x} \d{t} \\
        & = \frac{1}{2} \int_{\R^d} f(x) \big(\nu(x) - \rho(x)\big) \d{x} \\
        & = \frac{1}{2}  \int_{\R^d} \int_{\R^d} \Phi(x - y) \d{(\mu - \nu)}(x) \d{(\mu - \nu)}(y).\qedhere
    \end{align*}
\end{proof}

\subsection{Proof of \cref{lemma:closure_affine_vector_fields}}\label{subsec:closure_affine_vector_fields}

\begin{proof}
    \begin{enumerate}
        \item
        For the probability measures $(\mu_t)_{t \ge 0}$ associated to the densities $(\rho_t)_{t \ge 0}$, we have by \citep[Lemma~8.1.4]{AGS2008} that $\mu_t = (X_t)_{\#} \mu_0$, where the flow $(X_t \colon \R^d \to \R^d)_{t \ge 0}$ solves
        \begin{equation*}
            \begin{cases}
                \dot{X}_t
                = v_t \circ X_t, \\
                X_0 = \id.
            \end{cases}
        \end{equation*}
        Then, there exist matrices $A_t \in \R^{d \times d}$ and vectors $a_t \in \R^d$ such that $X_t(x) = A_t x + a_t$ (where $\dot{A}_t = S_t A_t$, $A_0 = I$ and $\dot{a}_t = S_t a_t + s_t$, $a_0 = 0$).
        Now, suppose that $\rho_0 \sim E(m_0, \Sigma_0, g)$.
        Then, \cref{remark:affine_pushforward_elliptic} implies for any $t > 0$ there exist $m_t \in \R^d$ and $\Sigma_t \in \Sym_+(\R; d)$ such that $\rho_t \sim E(m_t, \Sigma_t, g)$.

        \item 
        We now derive the ODE \eqref{eq:mean-covariance_ODE} for the mean and covariance parameters $m_t$ and $\Sigma_t$.
        Using integration by parts, we have
        \begin{align*}
            \dot{m_t}
            & = \int_{\R^d} x \dot{\rho}_t(x) \d{x}
            = - \int_{\R^d} x \nabla \cdot \big(\rho_t(x) v_t(x)\big) \d{x}
            = \int_{\R^d} \rho_t(x) (S_t x + s_t) \d{x}
            = S_t m_t + s_t,
        \end{align*}
        and, similarly,
        \begin{align*}
            \dot{\Sigma}_t
            & = \partial_t \int_{\R^d} (x - m_t)(x - m_t)^{\tT} \rho_t(x) \d{x} \\
            & = \int_{\R^d} (x - m_t) (x - m_t)^{\tT} \partial_t \rho_t(x) \d{x}
            - 2 \dot{m}_t \underbrace{\int_{\R^d} (x - m_t)^{\tT} \rho_t(x) \d{x}}_{= 0} \\
            & = - \int_{\R^d} (x - m_t) (x - m_t)^{\tT} \nabla \cdot \big(v_t(x) \rho_t(x)\big) \d{x} \\
            & = \int_{\R^d} \left( (x - m_t) v_t(x)^{\tT} + v_t(x) (x - m_t)^{\tT} \right) \rho_t(x) \d{x} \\
            & = \int_{\R^d} \left( (x - m_t) (S_t x + s_t)^{\tT} + (S_t x + s_t) (x - m_t)^{\tT} \right) \rho_t(x) \d{x} \\
            & = \int_{\R^d} \left( (x - m_t) x^{\tT} S_t^{\tT} + S_t x (x - m_t)^{\tT} \right) \rho_t(x) \d{x} \\
            & = \int_{\R^d} \left( (x - m_t) (x - m_t)^{\tT} S_t^{\tT} + S_t (x - m_t) (x - m_t)^{\tT} \right) \rho_t(x) \d{x} \\
            & = \Sigma_t S_t^{\tT} + S_t \Sigma_t.\qedhere
        \end{align*}
    \end{enumerate}
\end{proof}
\subsection{Proof of \cref{thm:WKL}}\label{proof:WKL}
\begin{proof}
    Since $v^{\O}(\mu, \nabla f) = \nabla f$,  Lemma~\ref{lemma:closure_affine_vector_fields} gives us the linear time-invariant and decoupled system
    \begin{equation} \label{eq:mean-covariance_ODE_Gaussians_WKL}
        \begin{cases}
            \dot{m}_t
            = B m_t + b, \\
            \dot{\Sigma}_t
            = 2 \Sym(B \Sigma_t).
        \end{cases}
    \end{equation}
    which can be analytically solved to obtain
    \begin{align*}
            m_1
            &= e^{B}{m}_0+\left(e^{B}B^{\dagger}+B^{\perp}-B^{\dagger}\right)b, \\
            {\Sigma}_1
            &= e^{B}\Sigma_0e^{B}.
    \end{align*}
    Under the constraints that $\Sigma_0\succ 0$ and $\Sigma_1\succ 0$, these equations possess a unique solution \citep[Thm.~5]{kuvcera1973review} given by
    \begin{align}\label{eq:soln_B}
        B
        &=\log \left(\Sigma_0^{-\frac{1}{2}}\left(\Sigma_0^{\frac{1}{2}}\Sigma_1\Sigma_0^{\frac{1}{2}}\right)^{\frac{1}{2}}\Sigma_0^{-\frac{1}{2}}\right),\\
        b
        &=\left((e^{B}-I)B^{\dagger}+B^{\perp}\right)^{-1}(m_1-e^Bm_0)\label{eq:soln_b}.
    \end{align}
    gives us the final result \eqref{eq:D_main_formula}.
\end{proof}

\subsection{Proof of the Kalman-Wasserstein-KL formula in \cref{thm:KWKL}} \label{sec:KW-KL_proof}
Now, we prove \cref{thm:KWKL}.
\begin{proof}
\begin{enumerate}
    \item 
    First, we solve the mean-covariance ODE.
    By \cref{eq:velocity_field_KW,eq:mean-covariance_ODE} the geodesic $\gamma(t) \sim E(m_t, \Sigma_t, g)$ follows
    \begin{equation} \label{eq:mean_cov_KW_GF}
        \begin{cases}
            \dot{m}_t
            = (\kappa_g \Sigma_t + \lambda I) (B m_t + b), \\
            \dot{\Sigma}_t
            = 2 \Sym\big( (\kappa_g \Sigma_t + \lambda I) B \Sigma_t\big).
        \end{cases}
    \end{equation}
    We choose $B$ such that $B$ and $\Sigma_0$ are simultaneously diagonalizable with
    \begin{equation*}
        \Sigma_0 = Q\diag(s_{0})Q^{-1}
        \qquad \text{and} \qquad
        B = Q \diag(\beta_i) Q^{-1}
    \end{equation*}
    From now on, we will only work in the $Q$-basis: we can write $\Sigma_t = Q \diag(s_t) Q^{-1}$, since if $\Sigma_t$ is the unique solution, then $\partial_t Q^{-1} \Sigma_t Q = 2 \Sym( (\kappa_g Q^{-1} \Sigma_t Q + \lambda I) \diag(\beta) Q^{-1} \Sigma_t Q)$ is diagonal.
    Further, let $\tilde{b} \coloneqq Q^{-1} b$, and $\tilde{m}_t \coloneqq Q^{-1} m_t$, we obtain
    \begin{equation*}
        \begin{cases}
            (\dot{\tilde{m}}_t)_i
            = (\kappa_g (s_t)_i + \lambda) (\beta_i (\tilde{m}_t)_i + \tilde{b}_i), \\
            (\dot{s}_t)_i
            = 2 (\kappa_g (s_t)_i + \lambda) (s_t)_i \odot \beta_i,
        \end{cases}
    \end{equation*}
    where $\odot$ denotes elementwise multiplication of vectors.
    This system can be solved componentwise: the covariance equation has the closed-form solution
    \begin{equation*}
        (s_i)_t
        = \frac{\lambda (s_{0})_i}{(\kappa_g (s_{0})_i + \lambda) e^{-2 \lambda \beta_i t} - \kappa_g (s_{0})_i}, \qquad
        t < \begin{cases}
            \frac{1}{2 \lambda \beta_i} \ln\left(1 + \frac{\lambda}{\kappa_g (s_{0})_i}\right), & \beta_i > 0, \\
            \infty, & \beta_i \le 0.
        \end{cases}
    \end{equation*}
    (Equivalently, $\Sigma_t = \lambda \Sigma_0 \big( (\lambda I + \kappa_g \Sigma_0) e^{-2 \lambda B t} - \kappa_g \Sigma_0 \big)^{-1}$.)
    We will see that the upper bound on the existence interval is larger than 1, so that everything is well-defined.

    Plugging this solution into the equation for the mean yields, after some algebra
    \begin{equation} \label{eq:tilde_m_t_i}
        (\tilde{m}_t)_i
        = \begin{cases}
            (\tilde{m}_0)_{i} + t \big(\kappa_g (s_{0})_i + \lambda) \tilde{b}_i, & \text{if } \beta_i = 0, \\
            \left( (\tilde{m}_0)_{i} + \frac{\tilde{b}_i}{\beta_i}\right) \sqrt{\frac{\lambda}{(\kappa_g (s_{0})_i + \lambda) e^{-2 \lambda \beta_i t} - \kappa_g (s_{0})_i}} - \frac{\tilde{b}_i}{\beta_i}, & \text{else.}
        \end{cases}
    \end{equation}

    \item 
    Now, we solve for $B$ and $b$.
    
    Solving for $B$ yields
    \begin{align*}
        \Sigma_1
        & = \lambda \Sigma_0 \left( (\lambda I + \kappa_g \Sigma_0) e^{-2 \lambda B} - \kappa_g \Sigma_0\right)^{-1} \\
        \iff
        e^{2 \lambda B}
        & = \left( \lambda \Sigma_0^{-1} + \kappa_g I \right) \left( \lambda \Sigma_1^{-1} + \kappa_g I\right)^{-1}.
    \end{align*}
    Since $\Sigma_0$ and $\Sigma_1$ are symmetric positive definite matrices and $\lambda, \kappa_g > 0$, the principal matrix logarithm of the right-hand side exists and is unique, and we obtain
    \begin{equation*}
        B
        = \frac{1}{2 \lambda} \ln\left( \left( \lambda \Sigma_0^{-1} + \kappa_g I\right) \left( \lambda \Sigma_1^{-1} + \kappa_g I \right)^{-1} \right).
    \end{equation*}
    The matrix $B$ is symmetric since $\Sigma_0 \Sigma_1 = \Sigma_1 \Sigma_0$.
    Hence, 
    \begin{equation} \label{eq:beta_i}
        \beta_i
        = \frac{1}{2\lambda} \ln\left(\frac{\frac{\lambda}{(s_{0})_{i}} +\kappa_g}{\frac{\lambda}{(s_{1})_{i}} + \kappa_g}\right), \qquad i \in \{ 1, \ldots, d \}.
    \end{equation}
    and thus $\beta_i = 0$ if and only if $(s_0)_i = (s_1)_i$ for $i \in \{ 1, \ldots, d \}$.
    Hence, 
    \begin{align*}
        (\Sigma_1 - \Sigma_0)^{\perp}
        & = I - (\Sigma_1 - \Sigma_0) (\Sigma_1 - \Sigma_0)^{\dagger} \\
        & = I - Q \diag(s_1 - s_0) Q^{-1} Q \diag\left(\frac{1}{s_1 - s_0} \1_{s_1 \ne s_0}\right) Q^{-1} \\
        & = I - Q \diag(1_{(s_1)_i \ne (s_0)_i}) Q^{-1}
        = Q \diag(1_{(s_1)_i = (s_0)_i}) Q^{-1}.
    \end{align*}
    Furthermore, we can solve for $\tilde{b}$ in terms of $\mu$ and $\nu$:
    \begin{equation} \label{eq:tilde_b_i}
        \tilde{b}_i
        = \begin{cases}
            \frac{(\tilde{m}_1)_{i} - (\tilde{m}_0)_{i}}{\kappa_g (s_{0})_i + \lambda}, & \text{if } \beta_i = 0, \\
            \beta_i \frac{(\tilde{m}_1)_{i} \sqrt{(\xi_1)_i} - (\tilde{m}_0)_{i} \sqrt{\lambda}}{\sqrt{\lambda} - \sqrt{(\xi_1)_i}} , & \text{else.}
        \end{cases}
    \end{equation} 
    where we used the shorthand $(\xi_t)_i \coloneqq (\kappa_g (s_{0})_i + \lambda) e^{-2 \lambda \beta_i t} - \kappa_g (s_{0})_i$ for $i \in \{ 1, \ldots, d \}$.
    Furthermore, \eqref{eq:beta_i} implies
    \begin{equation} \label{eq:e-2_lambda_beta}
        e^{-2 \lambda \beta_i t}
        = \left(\frac{\frac{\lambda}{(s_{1})_{i}} + \kappa_g}{\frac{\lambda}{(s_{0})_{i}} +\kappa_g}\right)^{t}, \qquad i \in \{ 1, \ldots, d \},
    \end{equation}
    so $(\xi_1)_i = \lambda \frac{(s_0)_i}{(s_1)_i}$ for all $i \in \{ 1, \ldots, d \}$.

    \item 
    Now, we can evaluate the divergence formula from \eqref{eq:divergence_elliptic_formula}:
    \begin{align*}
        D^{\KW}(\mu \mid \nu)
        & = \frac{\kappa_g}{2} \tr(B \Sigma_1) + \frac{1}{2} m_1^{\tT} (B m_1 + 2 b) 
         - \int_{0}^{1} \frac{\kappa_g}{2} \tr(B \Sigma_t) + \frac{1}{2} m_t^{\tT} (B m_t + 2 b) \d{t}.
    \end{align*}
    Splitting the sum into the $\beta_i = 0$ and $\beta_i \ne 0$ contributions and plugging in \eqref{eq:tilde_m_t_i} yields
    \begin{align*}
        \frac{\kappa_g}{2} \tr(B \Sigma_t) + \frac{1}{2} m_t^{\tT}(B m_t + 2 b)
        & = \frac{1}{2} \sum_{i = 1}^{d} \kappa_g \beta_i (s_t)_i + (\tilde{m}_t)_i (\beta_i (\tilde{m}_t)_i + 2 \tilde{b}_i) \\
        & = \sum_{\substack{i = 1 \\ \beta_i = 0}}^{d} \big((\tilde{m}_0)_{i} + t \big((\kappa_g s_{0})_i + \lambda) \tilde{b}_i\big) \tilde{b}_i \\
        & \quad + \frac{1}{2} \sum_{\substack{i = 1 \\ \beta_i \ne 0}}^{d} \beta_i \underbrace{\left( \kappa_g (s_t)_i + (\tilde{m}_t)_i \left( (\tilde{m}_t)_i + 2 \frac{(\tilde{m}_1)_{i} \sqrt{(\xi_1)_i} - (\tilde{m}_0)_{i} \sqrt{\lambda}}{\sqrt{\lambda} - \sqrt{(\xi_1)_i}} \right)\right)}_{\eqqcolon (\star)}.
    \end{align*}
    We have 
    \begin{align*}
        (\star)
        & = \kappa_g \frac{\lambda (s_0)_i}{\xi_t}
        + \left( \left((\tilde{m}_0)_i + \frac{(\tilde{m}_1)_i \sqrt{\xi_1} - (\tilde{m}_0)_i \sqrt{\lambda}}{\sqrt{\lambda} - \sqrt{\xi_1}}\right) \frac{\sqrt{\lambda}}{\sqrt{\xi_t}} - \frac{(\tilde{m}_1)_i \sqrt{\xi_1} - (\tilde{m}_0)_i \sqrt{\lambda}}{\sqrt{\lambda} - \sqrt{\xi_1}}\right) \\
        & \qquad \qquad \qquad \qquad \cdot \left( \left((\tilde{m}_0)_i + \frac{(\tilde{m}_1)_i \sqrt{\xi_1} - (\tilde{m}_0)_i \sqrt{\lambda}}{\sqrt{\lambda} - \sqrt{\xi_1}}\right) \frac{\sqrt{\lambda}}{\sqrt{\xi_t}} + \frac{(\tilde{m}_1)_i \sqrt{\xi_1} - (\tilde{m}_0)_i \sqrt{\lambda}}{\sqrt{\lambda} - \sqrt{\xi_1}}\right)
        \\
        & = \kappa_g \frac{\lambda (s_0)_i}{\xi_t}
        + \left((\tilde{m}_0)_i + \frac{(\tilde{m}_1)_i \sqrt{\xi_1} - (\tilde{m}_0)_i \sqrt{\lambda}}{\sqrt{\lambda} - \sqrt{\xi_1}}\right)^2 \frac{\lambda}{\xi_t}
        - \left(\frac{(\tilde{m}_1)_i \sqrt{\xi_1} - (\tilde{m}_0)_i \sqrt{\lambda}}{\sqrt{\lambda} - \sqrt{\xi_1}}\right)^2.
    \end{align*}
    The $\beta_i = 0$ part of $D^{\KW}(\mu \mid \nu)$ is
    \begin{align*}
        & \sum_{\substack{i = 1 \\ \beta_i = 0}}^{d} \frac{(\tilde{m}_1)_i ((\tilde{m}_1)_i - (\tilde{m}_0)_i)}{\kappa_g (s_0)_i + \lambda}
        - \int_{0}^{1} ((\tilde{m}_0)_i + t ( (\tilde{m}_1)_i - (\tilde{m}_0)_i) ) \d{t} \cdot \frac{(\tilde{m}_1)_i - (\tilde{m}_0)_i}{\kappa_g (s_0)_i + \lambda} \\
        & = \sum_{\substack{i = 1 \\ \beta_i = 0}}^{d} \left( (\tilde{m}_1)_i - \frac{(\tilde{m}_1)_i + (\tilde{m}_0)_i}{2}\right)  \frac{(\tilde{m}_1)_i - (\tilde{m}_0)_i}{\kappa_g (s_0)_i + \lambda} 
        = \frac{1}{2} \sum_{\substack{i = 1 \\ (s_0)_i = (s_1)_i}}^{d} \frac{\big((\tilde{m}_1)_i - (\tilde{m}_0)_i\big)^2}{\kappa_g (s_0)_i + \lambda} \\
        & = \frac{1}{2} (m_1 - m_0)^{\tT} (\Sigma_1 - \Sigma_0)^{\perp} (\kappa_g \Sigma_0 + \lambda I)^{-1} (\Sigma_1 - \Sigma_0)^{\perp} (m_1 - m_0).
    \end{align*}
    For $i$-th summand of the $\beta_i \ne 0$ part of $D^{\KW}(\mu \mid \nu)$, we first integrate the only $t$-dependent term:
    \begin{align*}
        \int_{0}^{1} \frac{1}{(\xi_t)_i} \d{t}
        & = \int_{0}^{1} \frac{1}{(\kappa_g (s_{0})_i + \lambda) e^{-2 \lambda \beta_i t} - \kappa_g (s_{0})_i} \d{t} \\
        & = \frac{1}{\kappa_g (s_0)_i 2 \lambda \beta_i} \ln\left(\frac{\lambda}{(\xi_1)_i}\right) - \frac{1}{\kappa_g (s_0)_i} \\
        & = \frac{1}{2 \lambda \beta_i \kappa_g (s_0)_i} \ln\left(\frac{(s_1)_i}{(s_0)_i}\right) - \frac{1}{\kappa_g (s_0)_i}
    \end{align*}
    Thus,
    \begin{equation*}
        \frac{1}{(\xi_1)_i} - \int_{0}^{1} \frac{1}{(\xi_t)_i} \d{t}
        = \frac{\kappa_g (s_1)_i + \lambda}{\lambda \kappa_g (s_0)_i} - \frac{1}{2 \lambda \beta_i \kappa_g (s_0)_i} \ln\left(\frac{(s_1)_i}{(s_0)_i}\right).
    \end{equation*}
    Hence, the complete contribution of each $\beta_i \ne 0$ term is (note that the affine term cancels)
    \begin{gather*}
        \frac{\lambda}{2} \beta_i \left( \kappa_g (s_0)_i
        + \left((\tilde{m}_0)_i + \frac{(\tilde{m}_1)_i \sqrt{(\xi_1)_i} - (\tilde{m}_0)_i \sqrt{\lambda}}{\sqrt{\lambda} - \sqrt{(\xi_1)_i}}\right)^2 \right) \left( \frac{1}{(\xi_1)_i} - \int_{0}^{1} \frac{1}{(\xi_t)_i} \d{t} \right).
    \end{gather*}
    We have 
    \begin{equation*}
        (\tilde{m}_0)_i + \frac{(\tilde{m}_1)_i \sqrt{(\xi_1)_i} - (\tilde{m}_0)_i \sqrt{\lambda}}{\sqrt{\lambda} - \sqrt{(\xi_1)_i}}
        = \frac{\sqrt{s_0}}{\sqrt{s_1} - \sqrt{s_0}} ((\tilde{m}_1)_i - (\tilde{m}_0)_i)
    \end{equation*}
    and thus the full contribution reads 
    \begin{gather*}
        \sum_{\substack{i = 1 \\ (s_0)_i \ne (s_1)_i}}^{d} \left( \kappa_g (s_0)_i + \frac{(s_0)_i ((\tilde{m}_1)_i - (\tilde{m}_0)_i)^2}{\left(\sqrt{s_1} - \sqrt{s_0}\right)^2} \right) \left( \frac{1}{4\lambda} \ln\left(\frac{\frac{\lambda}{(s_{0})_{i}} +\kappa_g}{\frac{\lambda}{(s_{1})_{i}} + \kappa_g}\right) \frac{\kappa_g (s_1)_i + \lambda}{(s_0)_i \kappa_g} - \frac{1}{4 \kappa_g (s_0)_i} \ln\left(\frac{(s_1)_i}{(s_0)_i}\right) \right) \\
        = \frac{1}{4 \lambda} \sum_{\substack{i = 1 \\ (s_0)_i \ne (s_1)_i}}^{d} \left( 1 + \frac{((\tilde{m}_1)_i - (\tilde{m}_0)_i)^2}{\kappa_g \left(\sqrt{s_1} - \sqrt{s_0}\right)^2}  \right) \left( \kappa_g (s_1)_i \ln\left(\frac{(s_1)_i}{(s_0)_i}\right) + (\kappa_g (s_1)_i + \lambda) \ln\left(\frac{\lambda +\kappa_g (s_{0})_{i}}{\lambda + \kappa_g (s_{1})_{i}}\right) \right).
    \end{gather*} 
    In matrix form, we obtain
    \begin{gather*}
        \frac{1}{4 \lambda} \tr\left(
            (\kappa_g \Sigma_1 + \lambda I) \ln\big( (\kappa_g\Sigma_0 + \lambda I) (\kappa_g\Sigma_1 + \lambda I)^{-1}\big)\right)
            + \frac{\kappa_g}{4 \lambda} \tr\left(\Sigma_1 \ln\big(\Sigma_1 \Sigma_0^{-1}\big)\right) \\
            + \frac{1}{4 \lambda} (m_1 - m_0)^{\tT} P
        \left(
            \Sigma_1 \ln\big(\Sigma_1 \Sigma_0^{-1}\big)
            + \frac{1}{\kappa_g} (\kappa_g \Sigma_1 + \lambda I) \ln\big( (\lambda I + \kappa_g \Sigma_0) (\lambda I + \kappa_g \Sigma_1)^{-1}\big)
        \right) \\
        \cdot P \left((\sqrt{\Sigma_1} - \sqrt{\Sigma_0})^{\dagger}\right)^{2} (m_1 - m_0),
    \end{gather*}
    where $P \coloneqq (\Sigma_1 - \Sigma_0) (\Sigma_1 - \Sigma_0)^{\dagger}$.\qedhere
\end{enumerate}
\end{proof}

We now show that one can remove the assumption of simultaneous commutativity in \cref{thm:KWKL} for Gaussian distributions.

\begin{theorem}[KWKL for Gaussians with arbitrary covariance matrices]
    Let $\mu \sim \NN(m_0, \Sigma_0)$ and $\nu \sim \NN(m_1, \Sigma_1)$.
    With $\Delta m \coloneqq m_1 - m_0$ 
    we have
    \begin{equation*}
        D^{\KW}(\mu \mid \nu)
        = \frac{1}{2} \tr(B \Sigma_1) - \frac{1}{4} \log\left(\det\left( \left(\Sigma_1 + \lambda I\right) \left( \Sigma_0 + \lambda I\right)^{-1}\right)\right)
        + \Delta m^{\top} W^{-\top} K W^{-1} \Delta m,
    \end{equation*}
    where $B$ is defined in \eqref{eq:B}, and $\Sigma_t^{-1}
        = e^{- \lambda t B} \left( \Sigma_0^{-1} + \lambda^{-1} I\right) e^{- \lambda t B} - \lambda^{-1} I$, and $W \coloneqq \int_{0}^{1} (\Sigma_t + \lambda I) \Psi(t, 0) \d t$ is assumed to be invertible, $K \coloneqq \int_{0}^{1} t \Psi(t, 0)^{\top} (\Sigma_t + \lambda I) \Psi(t, 0) \d t$, and $\Psi$ is the fundamental matrix defined by $\partial_t \Psi(t, s) = B (\Sigma_t + \lambda I) \Psi(t, s)$ and $\Psi(s, s) = I$.
\end{theorem}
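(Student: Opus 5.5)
We follow the same three-step template as in the proof of \cref{thm:KWKL}, but without diagonalizing. For Gaussians $\kappa_g = 1$, and by \cref{example:affine_vector_fields_elliptic} and \cref{lemma:closure_affine_vector_fields} the $\KW$-dual geodesic generated by $f(x) = \frac12 x^{\tT} B x + b^{\tT} x$ satisfies
\begin{equation*}
    \dot m_t = (\Sigma_t + \lambda I)(B m_t + b), \qquad \dot\Sigma_t = (\Sigma_t+\lambda I) B \Sigma_t + \Sigma_t B (\Sigma_t + \lambda I).
\end{equation*}
The covariance equation decouples from the mean. The first step is to linearize it via $X_t \coloneqq \Sigma_t^{-1} + \lambda^{-1} I$. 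Using $\frac{\d}{\d t}\Sigma_t^{-1} = -\Sigma_t^{-1}\dot\Sigma_t\Sigma_t^{-1}$ and $\Sigma_t^{-1}(\Sigma_t + \lambda I) = \lambda X_t$, one gets the Lyapunov-type ODE $\dot X_t = -\lambda (X_t B + B X_t)$. This ODE is solved by the congruence $X_t = e^{-\lambda t B} X_0 e^{-\lambda t B}$, which is exactly the stated formula for $\Sigma_t^{-1}$.

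The second step is to determine $B$ from the endpoint condition $e^{-\lambda B} X_0 e^{-\lambda B} = X_1$. This is a Riccati-type equation $Y X_0 Y = X_1$ for the symmetric positive definite matrix $Y = e^{-\lambda B}$. Its unique SPD solution is the matrix geometric mean, $Y = X_0^{-1/2}(X_0^{1/2} X_1 X_0^{1/2})^{1/2} X_0^{-1/2}$, exactly as in the proof of \cref{thm:WKL}, cf.\ \citep[Thm.~5]{kuvcera1973review}. Hence $B = -\lambda^{-1}\log Y$, which is presumably \eqref{eq:B}.

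With $\Sigma_t$ known, the mean equation is linear, $\dot m_t = (\Sigma_t + \lambda I)B\, m_t + (\Sigma_t + \lambda I) b$. This is where the fundamental matrix $\Psi$ enters, although the statement defines it with $B(\Sigma_t+\lambda I)$; I would first check which ordering is correct, and likely work with the adjoint variable $y_t \coloneqq B m_t + b = \nabla f(m_t)$. This variable satisfies $\dot y_t = B(\Sigma_t + \lambda I) y_t$, so $y_t = \Psi(t,0) y_0$ and $\Delta m = \int_0^1 (\Sigma_t+\lambda I) y_t \d t = W y_0$. Invertibility of $W$ then gives $y_0 = W^{-1}\Delta m$, and $b$ is recovered as $b = y_0 - B m_0$.

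The third step evaluates the divergence. I would use the dissipation form from \eqref{eq:divergence_short_formula} rather than \eqref{eq:divergence_elliptic_formula}, i.e.
\begin{equation*}
    D^{\KW} = \int_0^1 t\, g(\dot\gamma,\dot\gamma)\d t, \qquad g(\dot\gamma,\dot\gamma) = \E_{\gamma(t)}\big[\nabla f^{\tT}(\Sigma_t+\lambda I)\nabla f\big].
\end{equation*}
Writing $\nabla f(x) = B(x - m_t) + y_t$, the cross term vanishes in expectation, and the action splits into a covariance part $\int_0^1 t \tr(B(\Sigma_t+\lambda I)B\Sigma_t)\d t$ and a mean part $\int_0^1 t\, y_t^{\tT}(\Sigma_t+\lambda I)y_t \d t$. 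The mean part equals $y_0^{\tT} K y_0 = \Delta m^{\tT} W^{-\tT} K W^{-1}\Delta m$ directly.

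For the covariance part, note that $\tr(B(\Sigma_t+\lambda I)B\Sigma_t) = \frac12 \frac{\d}{\d t}\tr(B\Sigma_t)$ by the covariance ODE. Integrating by parts gives $\frac12\tr(B\Sigma_1) - \frac12\int_0^1 \tr(B\Sigma_t)\d t$. The remaining integral is the main obstacle. I would attack it through the identity $\frac{\d}{\d t}\log\det(\Sigma_t + \lambda I) = \tr\big((\Sigma_t + \lambda I)^{-1}\dot\Sigma_t\big) = 2\tr(B\Sigma_t)$, which follows from cyclicity of the trace. This yields $\int_0^1\tr(B\Sigma_t)\d t = \frac12\log\det\big((\Sigma_1+\lambda I)(\Sigma_0+\lambda I)^{-1}\big)$, giving the stated middle term. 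The delicate points are verifying this trace identity carefully in the noncommuting case and confirming that $\Sigma_t$ stays positive definite on $[0,1]$. For the latter it suffices that $X_t \succ \lambda^{-1} I$ along the congruence path, which I would check from its geodesic-type interpolation between $X_0$ and $X_1$, both of which exceed $\lambda^{-1} I$.
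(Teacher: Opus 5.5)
Your proposal is correct and follows essentially the paper's proof. The steps match: the linearization via $\Sigma_t^{-1}+\lambda^{-1}I$, the geometric-mean formula for $B$, the variable $Bm_t+b$ with fundamental matrix $\Psi$, and the identity $\frac{\d}{\d t}\log\det(\Sigma_t+\lambda I)=2\tr(B\Sigma_t)$. The only difference is that you evaluate the action directly rather than via \eqref{eq:divergence_elliptic_formula}; this is equivalent by one integration by parts and makes explicit why the mean contribution is additive, which the paper only asserts.

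On your positivity aside, which the paper does not address: $t\mapsto e^{-\lambda tB}X_0e^{-\lambda tB}$ is in general not the affine-invariant geodesic from $X_0$ to $X_1$, so that route does not directly give the bound. Instead, note that $X_1\succ\lambda^{-1}I$ means $e^{2\lambda B}\prec\lambda X_0$. Then L\"owner--Heinz together with $\lambda X_0\succ I$ gives $e^{2\lambda tB}\preceq(\lambda X_0)^t\prec\lambda X_0$ for $t\in[0,1)$, which is exactly $X_t\succ\lambda^{-1}I$.
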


\begin{proof}
    Again, we consider the quadratic potential $f(x) = \frac{1}{2} x^{\top} B x + b^{\tT} x + c$, where $B$ is symmetric.
    The mean and covariance ODE is
    \begin{equation} \label{eq:mean_cov_KW_GF_Gauss}
        \begin{cases}
            \dot{m}_t
            = (\Sigma_t + \lambda I) (B m_t + b), \\
            \dot{\Sigma}_t
            = (\Sigma_t + \lambda I) B \Sigma_t + \Sigma_t B (\Sigma_t + \lambda I).
        \end{cases}
    \end{equation}
    We can solve the covariance equation without diagonalization.
    In fact, using the substitution $M_{t, \lambda} \coloneqq \Sigma_t^{-1} + \frac{1}{\lambda} I$, we have $\dot{M}_{t, \lambda} = -2 \lambda \Sym(M_{t, \lambda} B)$.
    Hence, $M_{t, \lambda} = e^{-\lambda t B} M_{0, \lambda} e^{-\lambda t B}$ and so
    \begin{equation*}
        \Sigma_t^{-1}
        = e^{- \lambda t B} \left( \Sigma_0^{-1} + \lambda^{-1} I\right) e^{- \lambda t B} - \lambda^{-1} I.
    \end{equation*}
    Determining $B \in \Sym(\R; d)$ from the endpoints $\Sigma_0$ and $\Sigma_1$ yields
    \begin{equation} \label{eq:B}
        B = - \frac{1}{\lambda} \log\left( M_{0, \lambda}^{-\frac{1}{2}} \left( M_{0, \lambda}^{\frac{1}{2}} M_{1, \lambda} M_{0, \lambda}^{\frac{1}{2}} \right)^{\frac{1}{2}} M_{0, \lambda}^{-\frac{1}{2}} \right).
    \end{equation}

    Now, we first consider the centered case where $m_0 = m_1 = 0$.
    By \eqref{eq:divergence_elliptic_formula},
    \begin{equation*}
        D^{\KW}(\mu \mid \nu)
        = \frac{1}{2} \tr(B \Sigma_1) - \frac{1}{2} \int_{0}^{1} \tr(B \Sigma_t) \d t.
    \end{equation*}
    By well-known identities from matrix calculus, we have
    \begin{equation*}
        \frac{\d}{\d t} \log(\det(\Sigma_t + \lambda I))
        = \tr\left( \left( \Sigma_t + \lambda I\right)^{-1} \dot{\Sigma_t}\right)
        \overset{\eqref{eq:mean_cov_KW_GF_Gauss}}{=}
        \tr\left( B \Sigma_t + \left( \Sigma_t + \lambda I\right)^{-1} \Sigma_t B (\Sigma_t + \lambda I)\right)
        = 2 \tr(B \Sigma_t).
    \end{equation*}
    Hence,
    \begin{equation*}
        D^{\KW}(\mu \mid \nu)
        = \frac{1}{2} \tr(B \Sigma_1) - \frac{1}{4} \log\left(\det\left( \left(\Sigma_1 + \lambda I\right) \left( \Sigma_0 + \lambda I\right)^{-1}\right)\right).
    \end{equation*}

    Now, we turn to the general case.
    Changing coordinates, we define $r_t \coloneqq B m_t + b$.
    The mean equation in \eqref{eq:mean_cov_KW_GF_Gauss} becomes $\dot{r}_t = B (\Sigma_t + \lambda I) r_t$.
    Letting $\Psi$ be the corresponding fundamental matrix, we have $r_t = \Psi(t, 0) r_0$.
    Thus, $\Delta m = \int_{0}^{1} (\Sigma_t + \lambda I) r_t \d t = \int_{0}^{1} (\Sigma_t + \lambda I) \Psi(t, 0) \d t r_0 = W r_0$.
    Thus $r_0 = W^{-1} \Delta m$ and $b = r_0 - B m_0$.
    Thus, the missing additive contribution of the mean from \eqref{eq:divergence_elliptic_formula} is
    \begin{equation*}
        \int_{0}^{1} t \cdot r_t^{\top} (\Sigma_t + \lambda I) r_t \d t
        = \Delta m^{\top} W^{-\top} \int_{0}^{1} t \Psi(t, 0)^{\top} (\Sigma_t + \lambda I) \Psi(t, 0) \d t \cdot W^{-1} \Delta m.\qedhere
    \end{equation*}
\end{proof}

%% file: appendix_Elem_computations.tex
\section{Elementary calculations with elliptic distributions} \label{sec:Calculations_Elliptic}

In this section, we perform some elementary computations relating to elliptic distributions that are used in the main text.

\begin{lemma}[Covariance of elliptic distributions] \label{lemma:cov_elliptic_distributions}
    Let $X \sim \mu$ be an $E(m, \Sigma, g)$-distributed random variable.
    Then, $\E[X] = m$ and $\V[X] = \frac{1}{d} \frac{\pi^{\frac{d}{2}}}{\Gamma\left(\frac{d}{2}\right)} \int_{0}^{\infty} r^{\frac{d}{2}} g(r) \d{r} \cdot \Sigma$.
\end{lemma}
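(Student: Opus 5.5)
The plan is to reduce everything to the standardized case via the affine substitution $x = m + \Sigma^{\frac{1}{2}} y$. With density $\rho(x) = \det(\Sigma)^{-\frac{1}{2}} g\big((x-m)^{\tT}\Sigma^{-1}(x-m)\big)$, this substitution has Jacobian $\det(\Sigma)^{\frac{1}{2}}$. The determinant factors cancel, and the law of $Y \coloneqq \Sigma^{-\frac{1}{2}}(X - m)$ has density $y \mapsto g(\|y\|^2)$ on $\R^d$. This is consistent with \cref{remark:affine_pushforward_elliptic}, since $Y \sim E(0, I_d, g)$. It therefore suffices to show
\begin{equation*}
  \E[Y] = 0, \qquad \V[Y] = \frac{1}{d} \frac{\pi^{\frac{d}{2}}}{\Gamma\left(\frac{d}{2}\right)} \int_{0}^{\infty} r^{\frac{d}{2}} g(r) \d{r} \cdot I_d.
\end{equation*}
Then $\E[X] = m + \Sigma^{\frac{1}{2}}\E[Y] = m$ and $\V[X] = \Sigma^{\frac{1}{2}} \V[Y] \Sigma^{\frac{1}{2}}$, which gives the claim.

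For the standardized variable, I will first use the integrability hypotheses in the definition of elliptic distributions. Together with polar coordinates, they guarantee that $\|y\|$ and $\|y\|^2$ are integrable against $g(\|y\|^2)$, so all the moments below exist. The density is invariant under $y \mapsto -y$, so $\E[Y] = 0$. The density is also invariant under $y \mapsto Oy$ for every $O \in O(d)$, hence $\V[Y] = O\,\V[Y]\,O^{\tT}$ for all orthogonal $O$. A symmetric matrix commuting with all orthogonal matrices is a multiple of the identity; more directly, taking $O$ to be coordinate reflections kills the off-diagonal entries and taking permutations equalizes the diagonal ones. Thus $\V[Y] = c I_d$, and taking traces gives
\begin{equation*}
  c = \frac{1}{d}\int_{\R^d} \|y\|^2 g(\|y\|^2) \d{y}.
\end{equation*}
This is exactly $\kappa_g$ as defined in the main text.

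The main computational step is to convert this integral into the one-dimensional form. In polar coordinates, $\d y = s^{d-1} \d s \, \d\sigma$, and the sphere has area $|\S^{d-1}| = 2\pi^{\frac{d}{2}}/\Gamma\left(\frac{d}{2}\right)$. This yields $\frac{2\pi^{d/2}}{\Gamma(d/2)}\int_0^\infty s^{d+1} g(s^2)\d s$. Substituting $r = s^2$, so that $\d s = \frac{1}{2} r^{-\frac{1}{2}} \d r$, turns this into
\begin{equation*}
  \frac{\pi^{\frac{d}{2}}}{\Gamma\left(\frac{d}{2}\right)} \int_0^\infty r^{\frac{d}{2}} g(r) \d r.
\end{equation*}
The same computation applied to $g$ alone reproduces the normalization condition, which serves as a consistency check. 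Dividing by $d$ gives the constant in the statement.

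The only delicate point I expect is justifying finiteness and Fubini in the symmetry arguments, which is handled by the finite-expectation and finite-variance assumptions. Everything else is routine.
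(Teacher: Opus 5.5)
Your proposal is correct and follows the paper's proof essentially step by step: you whiten to $Y = \Sigma^{-\frac{1}{2}}(X-m)$ with density $g(\|y\|^2)$, use the $O(d)$-invariance to get $\V[Y] = \kappa_g I_d$, and identify the constant by taking traces. Your explicit polar-coordinate computation, together with the check that the integrability hypotheses make all moments finite, fills in a step the paper leaves implicit.
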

\begin{proof}
    Let $Y \coloneqq \Sigma^{-\frac{1}{2}}(X - m)$.
    Its law is $(\Sigma^{-\frac{1}{2}}(\cdot - m))_\# \mu$, whose density is $g \circ \| \cdot \|^2$.
    We have 
    \begin{equation*}
        \E[X]
        = \Sigma^{\frac{1}{2}} \underbrace{\E[Y]}_{= 0} + m \int_{\R^d} g(\| y \|_2^2) \d{y}
        = m \frac{\pi^{\frac{d}{2}}}{\Gamma\left(\frac{d}{2}\right)} \int_{0}^{\infty} r^{\frac{d}{2} - 1} g(r) \d{r}
        = m,
    \end{equation*}
    where the first integral vanishes because the integrand is odd,
    as well as
    \begin{equation*}
        \V[X]
        = \int_{\R^d} (x - m) (x - m)^{\tT} p_{m, \Sigma, g}(x) \d{x}
        = \Sigma^{\frac{1}{2}} \left( \int_{\R^d} y y^{\tT} g(\| y \|_2^2) \d{y}\right) \Sigma^{\frac{1}{2}}.
    \end{equation*}
    Let $\Sigma_Y \coloneqq \int_{\R^d} y y^{\tT} g(\| y \|_2^2) \d{y}$.
    For any orthogonal matrix $O \in O(d)$, we have $O \Sigma_Y O^{\tT} = \Sigma_Y$.
    Hence, there exists a $\kappa_g \in \R$ with $\Sigma_Y = \kappa_g \id_d$.
    We have $\kappa_g d = \tr(\Sigma_Y) = \int_{\R^d} \| y \|^2 g(\| y \|_2^2) \d{y}$, so $\V[X] = \kappa_g \Sigma$ with
    \begin{equation*}
        \kappa_g 
        = \frac{1}{d} \int_{\R^d} \| y \|^2 g(\| y \|^2) \d{y}
        = \frac{1}{d} \frac{\pi^{\frac{d}{2}}}{\Gamma\left(\frac{d}{2}\right)} \int_{0}^{\infty} r^{\frac{d}{2}} g(r) \d{r}.\qedhere
    \end{equation*}
\end{proof}

\begin{lemma}[Potential energy of elliptic distribution] \label{lemma:int_quadratic_Gaussian}
    If $\mu \sim E(m, \Sigma, g)$ and $f(x) = \frac{1}{2} x^{\tT} B x + b^{\tT} x + c$, then
    \begin{equation*}
        \int_{\R^d} f \d{\mu}
        = \frac{\kappa_g}{2}\tr(B \Sigma) + \frac{1}{2} m^{\tT} B m + b^{\tT} m + c.
    \end{equation*}
\end{lemma}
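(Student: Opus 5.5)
The plan is to reduce everything to moments of the standardized variable. As in the proof of \cref{lemma:cov_elliptic_distributions}, I will write $X \sim \mu$ as $X = \Sigma^{\frac{1}{2}} Y + m$. Here $Y$ has density $g(\|\cdot\|_2^2)$, so $\E[Y] = 0$ and $\E[Y Y^{\tT}] = \kappa_g I_d$. These are exactly the two facts established in \cref{lemma:cov_elliptic_distributions}: normalization, oddness of the integrand, and the $O(d)$-invariance argument. The finite-variance assumption in the definition of elliptic distributions guarantees that $\int_{\R^d} |f| \d\mu < \infty$, so linearity of the integral may be used freely.

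The steps are as follows. First, linearity gives
\[
\int_{\R^d} f \d\mu = \frac{1}{2}\E[X^{\tT} B X] + b^{\tT}\E[X] + c .
\]
The linear term equals $b^{\tT} m$ because $\E[X] = m$, and the constant integrates to $c$ by normalization. For the quadratic term I use the trace trick,
\[
\E[X^{\tT} B X] = \tr\big(B\,\E[X X^{\tT}]\big), \qquad \E[X X^{\tT}] = \V[X] + m m^{\tT} = \kappa_g \Sigma + m m^{\tT}.
\]
This yields $\tr(B(\kappa_g \Sigma + m m^{\tT})) = \kappa_g \tr(B\Sigma) + m^{\tT} B m$. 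Collecting the three contributions gives the claimed formula.

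I do not expect a genuine obstacle here. The only point needing care is that $\kappa_g$ is the constant from \cref{lemma:cov_elliptic_distributions}, so that $\V[X] = \kappa_g \Sigma$ and not $\Sigma$ itself. Once that lemma is invoked, the remainder is a routine calculation.
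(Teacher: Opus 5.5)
Your proposal is correct and follows essentially the same route as the paper: whiten via $X = \Sigma^{\frac{1}{2}} Y + m$, use $\E[Y] = 0$, normalization, and $\E[YY^{\tT}] = \kappa_g I_d$ from \cref{lemma:cov_elliptic_distributions}, then apply the trace identity. The only difference is cosmetic: you pass through $\E[XX^{\tT}] = \kappa_g\Sigma + mm^{\tT}$ rather than expanding the quadratic form in $y$. Your added integrability remark, that finite variance makes $\int |f| \d\mu < \infty$, is also correct.
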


\begin{proof}
    Using the \enquote{whitening} substitution $y \coloneqq \Sigma^{-\frac{1}{2}}(x - m)$ we have
    \begin{align*}
        \int_{\R^d} f \d{\mu}
        & = \int_{\R^d} \left( \frac{1}{2} x^{\tT} B x + b^{\tT} x + c \right) p_{m, \Sigma, g}(x) \d{x} \\
        & = \frac{1}{2} \int_{\R^d} \left(m^{\tT} + y^{\tT} \Sigma^{\frac{1}{2}}\right) B \left(\Sigma^{\frac{1}{2}} y + m\right) g(\| y \|_2^2) \d{y}
        + b^{\tT} m + c \\
        & = \frac{1}{2} \tr\left( \Sigma^{\frac{1}{2}} B \Sigma^{\frac{1}{2}} \int_{\R^d}  y y^{\tT}  g(\| y \|_2^2) \d{y}\right)
        + \frac{1}{2} m^{\tT} B m
        + b^{\tT} m + c \\
        & = \frac{\kappa_g}{2}\tr(B \Sigma) + \frac{1}{2} m^{\tT} B m + b^{\tT} m + c,
    \end{align*}
    using that $\int_{\R^d} y g(\| y \|_2^2) \d{y} = 0$, $\int_{\R^d} g(\| y \|_2^2) \d{y} = 1$ and the linearity and cyclic permutation property of the trace together with $y^{\tT} A y = \tr(A y y^{\tT})$ and \cref{lemma:cov_elliptic_distributions}.
\end{proof}

\subsection{Calculating the vector fields for elliptic distributions} \label{sec:vector_fields_calculations}
We now verify the calculations from \cref{example:affine_vector_fields_elliptic}

\begin{lemma}[Kalman-Wasserstein vector field for elliptic distribution]
    The Kalman-Wasserstein vector field for an elliptic distribution is \eqref{eq:velocity_field_KW}.
    \begin{equation} 
        v^{\KW}(p_{m, \Sigma, g}, \nabla f)
        = x \mapsto (\kappa_g \Sigma + \lambda \id_d) (B x + b).
    \end{equation}
\end{lemma}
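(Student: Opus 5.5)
The approach is to compute the divergence form of the Kalman--Wasserstein Onsager operator directly and identify the velocity field. By the definition of $\m_\lambda$, with $M=\R^d$ carrying the Euclidean metric, we have
\[
\phi^{\KW}_\rho([f]) = -\nabla\cdot\big(\m_\lambda(\rho)\nabla f\big) = -\nabla\cdot\big(\rho\,(C(\rho)+\lambda I_d)\nabla f\big).
\]
Comparing with \eqref{eq:Onsager_operator_in_divergence_form} gives
\[
v^{\KW}(\rho,\nabla f) = (C(\rho)+\lambda I_d)\nabla f.
\]
The covariance matrix $C(\rho)$ is a constant matrix, independent of $x$, so it can be pulled outside the spatial gradient; this is what makes the field affine.

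The proof then has two steps.

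\begin{enumerate}
\item \textbf{Covariance.} For $M=\R^d$ the Fréchet mean is the usual mean and is unique. The Riemannian covariance therefore reduces to the ordinary covariance $\int (x-m)(x-m)^{\tT}\rho(x)\,\d x$. For $\rho = p_{m,\Sigma,g}$, \cref{lemma:cov_elliptic_distributions} gives $C(\rho)=\kappa_g\Sigma$.

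\item \textbf{Gradient.} For the quadratic potential $f(x)=\tfrac12 x^{\tT}Bx+b^{\tT}x$ with $B$ symmetric, we have $\nabla f(x)=Bx+b$.
\end{enumerate}

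Substituting both into the formula for $v^{\KW}$ yields $x\mapsto(\kappa_g\Sigma+\lambda\,\id_d)(Bx+b)$, which is \eqref{eq:velocity_field_KW}.

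The only subtle point is the first step: confirming that the mobility $f\big(C(f)+\lambda\,\id\big)$ uses the covariance of the density itself. Because \cref{lemma:cov_elliptic_distributions} shows the variance is $\kappa_g\Sigma$ and not $\Sigma$, the scaling parameter $\Sigma$ must be multiplied by $\kappa_g$ in the final expression.
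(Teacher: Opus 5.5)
Your proposal is correct and follows essentially the paper's proof: you read off $v^{\KW}=\m_\lambda(\rho)\rho^{-1}\nabla f=(C(\rho)+\lambda I_d)\nabla f$ by comparing the divergence form with the nonlinear-mobility Onsager operator. You then substitute $C(p_{m,\Sigma,g})=\kappa_g\Sigma$ from \cref{lemma:cov_elliptic_distributions} and $\nabla f(x)=Bx+b$. Your added remarks, that the Riemannian covariance on $\R^d$ reduces to the ordinary covariance and that the quadratic potential has this gradient, only make explicit what the paper leaves implicit.
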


\begin{proof}
    Comparing \eqref{eq:Onsager_operator_in_divergence_form} with \eqref{eq:W2_nonlinear_mobility_Onsager_operator} yields the vector field
    \begin{equation*}
        v^{\KW}(p_{m, \Sigma, g}, \nabla f)
        = \frac{m(p_{m, \Sigma, g})}{p_{m, \Sigma, g}} \nabla f
        = \left(\V[p_{m, \Sigma, g}] + \lambda I\right) \nabla f
        = (\kappa_g \Sigma + \lambda I_d) \nabla f.\qedhere
    \end{equation*}
\end{proof}

%% file: appendix_FR_calculations.tex
\subsection{Fisher-Rao calculation} \label{subsec:FR_calculations}

We show that 
\begin{align}
    D^{\FR}(\mu \mid \nu) = \KL(\nu \mid\mu).
\end{align}
for $\mu, \nu \in \P_+^{\infty}(\R^n)$ such that
\begin{equation} \label{eq:dominating_function}
    \int_{\R^n} \left|\ln\left(\frac{\\d\mu}{\d \nu}\right)\right|^k \d (\mu + \nu) < \infty, \qquad k = 2.
\end{equation}

Note that \eqref{eq:dominating_function} holds for Gaussian distributions and for Student-t distributions $\mu$, $\nu$ with $v > 2$ degrees of freedom.

\begin{proof}
The $\FR$ geodesic is given by 
\begin{align*}
    \gamma(t)=\frac{\left(\frac{\d{\nu}}{\d{\mu}}\right)^t}{Z(t)} \mu=\rho_t \cdot \mu, \textnormal{ where  }Z(t)=\int_{\mathbb{R}^n} \left(\frac{\d{\nu}}{\d{\mu}}\right)^t \d{\mu}.
\end{align*}
Note that if $\mu=f_{\mu}\L$ and $\nu=f_{\nu}\L$, then 
\begin{align*}
    Z(t)=\int_{\mathbb{R}^n} \left(\frac{\d{\nu}}{\d{\mu}}\right)^t\d{\mu}=\int_{\mathbb{R}^n} \left(\frac{f_{\nu}}{f_{\mu}}\right)^tf_{\mu}\d \L=\int_{\mathbb{R}^n} \left(\frac{f_{\nu}}{f_{\mu}}\right)^tf_{\mu}\d \L=\int_{\mathbb{R}^n} f_{\nu}^tf_{\mu}^{(1-t)}\d \L.
\end{align*}
Note that $f_{\nu}^tf_{\mu}^{(1-t)}$ is differentiable with respect to $t$ and 
\begin{align*}
    \frac{\d}{\d{t}} \left(f_{\nu}^tf_{\mu}^{(1-t)}\right)=\left(f_{\nu}^tf_{\mu}^{(1-t)}\right)\ln \! \left(\frac{f_{\nu}}{f_{\mu}}\right).
\end{align*}
Note that \eqref{eq:dominating_function} implies the condition for $k = 1$ since $\mu + \nu$ has finite mass.
The AM-GM inequality together with \eqref{eq:dominating_function} for $k = 1$ justify differentiating under the integral sign, yielding
\begin{align*}
    \frac{\d}{\d{t}}Z(t)=\int_{\mathbb{R}^n} \left(f_{\nu}^tf_{\mu}^{(1-t)}\right)\ln \left(\frac{f_{\nu}}{f_{\mu}}\right)\d \L=\int_{\mathbb{R}^n} \left(\frac{f_{\nu}}{f_{\mu}}\right)^t\ln \left(\frac{f_{\nu}}{f_{\mu}}\right)\d{\mu}=\int_{\mathbb{R}^n} \left(\frac{\d{\nu}}{\d{\mu}}\right)^t\ln \left(\frac{\d{\nu}}{\d{\mu}}\right)\d{\mu}.
\end{align*}
This gives us
\begin{align*}
    \frac{1}{Z(t)}\frac{\d}{\d{t}}Z(t)
    &=\frac{1}{\int_{\mathbb{R}^n} \left(\frac{\d{\nu}}{\d{\mu}}\right)^t\d{\mu}} \int_{\mathbb{R}^n} \left(\frac{\d{\nu}}{\d{\mu}}\right)^t \ln \left(\frac{\d{\nu}}{\d{\mu}}\right)\d{\mu} =\int_{\mathbb{R}^n} \ln \left(\frac{\d{\nu}}{\d{\mu}}\right)\d{\gamma}(t).
\end{align*}
Differentiating $\gamma$ with respect to time and using the above expression for $\frac{\dot{Z}(t)}{Z(t)}$, we get that 
\begin{align*}
 \dot{\gamma}(t)=\left(\frac{\left(\frac{\d{\nu}}{\d{\mu}}\right)^t \ln \left(\frac{\d{\nu}}{\d{\mu}}\right)}{Z(t)}-\frac{\left(\frac{\d{\nu}}{\d{\mu}}\right)^t \dot{Z}(t)}{Z(t)^2} \right)\mu&=\left(\ln \left(\frac{\d{\nu}}{\d{\mu}}\right)-\frac{\dot{Z}(t)}{Z(t)} \right)\gamma(t)\\
 &=\left(\ln \left(\frac{\d{\nu}}{\d{\mu}}\right)-\int_{\mathbb{R}^n} \ln \left(\frac{\d{\nu}}{\d{\mu}}\right)\d{\gamma}(t) \right)\gamma(t)\\
 &=\left(\ln \left(\frac{\d{\nu}}{\d{\mu}}\right) - \E_{\gamma(t)}\left[{\ln \left(\frac{\d{\nu}}{\d{\mu}}\right)}\right] \right) \gamma(t).
\end{align*}
Plugging this into the expression for $D^{\FR}$ yields
\begin{align}
    D^{\FR}(\mu \mid \nu) 
    &= \int_0^1 t \left(\int_{\mathbb{R}^n} \left(\ln \left(\frac{\d{\nu}}{\d{\mu}}\right) - \E_{\gamma(t)}\left[{\ln \left(\frac{\d{\nu}}{\d{\mu}}\right)}\right] \right)^2 \d{\gamma}(t)\right) \d{t} \nonumber \\
    &= \int_0^1 t \cdot \E_{\gamma(t)}\left[{\left(\ln \left(\frac{\d{\nu}}{\d{\mu}}\right) - \E_{\gamma(t)}\left[{\ln \left(\frac{\d{\nu}}{\d{\mu}}\right)} \right]\right)^2}\right] \d{t}. \nonumber 
\end{align}
The AM-GM inequality together with \eqref{eq:dominating_function} for $k = 2$ justify differentiating under the integral sign, yielding
the standard identity
\begin{align*}
\frac{\d}{\d{t}}\left(\E_{\gamma(t)}\left[\ln\left( \frac{\d{\nu}}{\d\mu}\right)\right]\right)=\E_{\gamma(t)}\left[{\left(\ln \left(\frac{\d{\nu}}{\d{\mu}}\right) - \E_{\gamma(t)}\left[{\ln \left(\frac{\d{\nu}}{\d{\mu}}\right)}\right] \right)^2}\right].
\end{align*}
This, together with the application of integration by parts gives us
\begin{align}\label{eq:KL_energy_formula_specialize_to_KL_intermediate}
    D^{\FR}(\mu \mid \nu) 
    &= \int_0^1 t \cdot \frac{\d}{\d{t}}\left(\E_{\gamma(t)}\left[{\ln \frac{\d{\nu}}{\d\mu}}\right]\right) \d{t} \nonumber \\
    &= \E_{\gamma(1)}\left[{\ln \frac{\d{\nu}}{\d\mu}}\right]-\int_0^1 \left(\E_{\gamma(t)}\left[{\ln \frac{\d{\nu}}{\d\mu}}\right]\right) \d{t}.
\end{align}
Finally, using the fact that $\gamma(0)=\mu$ and $\gamma(1)=\nu$ and $\frac{\dot{Z}(t)}{Z(t)}=\int_{\mathbb{R}^n} \ln \left(\frac{\d{\nu}}{\d{\mu}}\right)\d{\gamma}(t)$, we get that
\begin{align*}
    \int_0^1 \left(\E_{\gamma(t)}\left[{\ln \frac{\d{\nu}}{\d\mu}}\right]\right) \d{t}
    &= \int_0^1 \frac{\dot{Z}(t)}{Z(t)} \d{t}
    = \int_0^1 \frac{\d}{\d{t}}\ln (Z(t)) \d{t}
    =\ln(Z(1))-\ln (Z(0))
    =0,
\end{align*}
which finally gives us 
\begin{align*}
    D^{\FR}(\mu \mid \nu) 
    &= \E_{\nu}\left[{\ln \frac{\d{\nu}}{\d\mu}}\right]=\int_{\mathbb{R}^n}\ln\left( \frac{\d{\nu}}{\d\mu}\right) \d{\nu}
    = \KL(\nu \mid \mu).\qedhere
\end{align*}
\end{proof}

\subsection{The Stein metric}
In this subsection, we introduce the Stein metric from SVGD and give a closed-form expression for the resulting divergence $D^{\Stein}$ between two centered elliptic distributions.

\begin{example}[Stein metric]
    Consider a symmetric positive definite kernel $K \colon \R^d \times \R^d \to \R$, for example, a heat kernel.
    The \emph{Stein isomorphism} \cite{DNS2023,LW2016,L2017,NR2023} at a density $\rho \in L^1(\R)$ is given by
    \begin{equation*}
        \phi_{\rho}^{\Stein}([f]) \coloneqq
        - \div\left(\rho \int_{\R^d} K(\cdot, y) \grad(f)(y) \rho(y) \d{y}\right).
    \end{equation*}
    The associated inner product on the cotangent space is
    \begin{align*}
        {\llangle} [f], [g] \rrangle_{\rho}^{\Stein} \coloneqq \int_{\R^d} \int_{\R^d} K(x, y)  \nabla f(x)^{\tT} \nabla g(y) \rho(x) \rho(y) \d x \d y. 
    \end{align*}
    By considering density-dependent kernels, one can also construct a \enquote{normalized Stein metric} \cite{XKS2022}.
\end{example}

From now on, we focus on the kernel $K(x, y) \coloneqq x^{\tT} A y + a$ for the Stein metric, where $A \in \Sym_+(\R; d)$ and $a \ge 0$.
We choose this kernel because its simple structure facilitates closure in the family of elliptical distributions for other (accelerated) gradient flows \cite{LGBP2024,SL2026,SLS2026}.

\begin{lemma}[Stein vector field with generalized bilinear kernel and elliptic distribution] \label{lemma:Stein_vector_field_elliptic}
    Let $\rho_{m, \Sigma, g} \sim E(m, \Sigma, g)$ and $\nabla f(x) = B x + b$.
    Then, for $x \in \R^d$ we have
    \begin{equation} \label{eq:velocity_field_Stein_gen_bilinear}
        v^{\Stein}(\rho, \nabla f)(x)
        = \int_{\R^d} (x^{\tT} A y + a) (B y + b) \rho_{m, \Sigma, g}(y) \d{y}
        = \left( B (\kappa_g \Sigma + m m^{\tT}) + b m^{\tT} \right)A x
        + a( B m + b).
    \end{equation}
\end{lemma}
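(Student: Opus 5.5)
The identity reduces to a direct computation once the Stein Onsager operator is written in divergence form. Comparing the Stein isomorphism with \eqref{eq:Onsager_operator_in_divergence_form}, the velocity field is
\begin{equation*}
    v^{\Stein}(\rho, \nabla f)(x) = \int_{\R^d} K(x, y) \nabla f(y) \rho(y) \d y .
\end{equation*}
Substituting $K(x,y) = x^{\tT} A y + a$ and $\nabla f(y) = B y + b$ immediately gives the first equality in \eqref{eq:velocity_field_Stein_gen_bilinear}. What remains is to evaluate this integral in closed form using the moments of $\rho_{m,\Sigma,g}$.

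Since $x^{\tT} A y$ is a scalar, I will rewrite it as $y^{\tT} A x$. The integrand then becomes
\begin{equation*}
    (B y + b)\, y^{\tT} A x + a (B y + b).
\end{equation*}
This is a linear combination of the matrix-valued functions $y y^{\tT}$ and $y^{\tT}$, the vector $y$, and the constant $1$, with $x$ appearing only outside the integral. By linearity, the integral becomes
\begin{equation*}
    B \Big(\int y y^{\tT} \rho\Big) A x + b \Big(\int y^{\tT} \rho\Big) A x + a\Big(B \int y \rho + b \int \rho\Big).
\end{equation*}

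Next I will invoke \cref{lemma:cov_elliptic_distributions}. It gives $\int \rho = 1$ and $\int y\rho(y)\d y = m$. It also gives the covariance $\kappa_g\Sigma$, so the second moment is
\begin{equation*}
    \int y y^{\tT}\rho(y)\d y = \kappa_g \Sigma + m m^{\tT}.
\end{equation*}
Substituting these moments produces
\begin{equation*}
    \big(B(\kappa_g\Sigma + m m^{\tT}) + b m^{\tT}\big) A x + a(Bm + b),
\end{equation*}
which is the claimed expression.

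The only point needing care is justifying the interchange of the $x$-independent matrices with the integral, together with the finiteness of the second moment. Both follow from the finite-variance assumption in the definition of elliptic distributions. Note also that the formula uses the second moment, not the covariance; this is why the term $m m^{\tT}$ appears.
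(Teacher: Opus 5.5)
Your proof is correct and matches the paper's argument. Both write the integral as $\big(B\,\E[XX^{\tT}] + b\,\E[X]^{\tT}\big)Ax + a(Bm+b)$ and then use \cref{lemma:cov_elliptic_distributions} to get $\E[XX^{\tT}] = \kappa_g\Sigma + mm^{\tT}$; the rewriting $x^{\tT}Ay = y^{\tT}Ax$ uses the symmetry of $A \in \Sym_+(\R; d)$, which the paper relies on just as implicitly.
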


\begin{proof}
    For an $E(m, \Sigma, g)$-distributed random variable $X$, the integral can be expressed as
    \begin{equation*}
        \E[(x^T A X)(B X + b)]
        = \left( B \E[X X^{\tT}] + b \E[X]^{\tT} \right)A x
    \end{equation*}
    By \cref{lemma:cov_elliptic_distributions}, $\E[X] = m$ and
    \begin{equation*}
        \E[X X^{\tT}]
        = \V[X] + m m^{\tT}
        = \kappa_g \Sigma + m m^{\tT},
    \end{equation*}
    so that
    \begin{equation*}
        \int_{\R^d} x^{\tT} A y  (B y + b) p_{m, \Sigma, g}(y) \d{y}
        = \left( B (\kappa_g \Sigma + m m^{\tT}) + b m^{\tT} \right)A x.
    \end{equation*}
    The second summand is immediate.
\end{proof}

\begin{remark}
    Recall that the KL between two Gaussians is 
    \begin{gather} \label{eq:KL_Gaussians}
        \KL(\NN(m_0, \Sigma_0) \mid \NN(m_1, \Sigma_1))
        = \frac{1}{2} \bigg[ \Delta m^{\tT} \Sigma_1^{-1} \Delta m - \tr\big(\Sigma_1^{-1} \Delta \Sigma\big) + \ln\left(\frac{\det(\Sigma_1)}{\det(\Sigma_0)}\right)\bigg],
    \end{gather}
    where $\Delta m \coloneqq m_1 - m_0$ and $\Delta \Sigma \coloneqq \Sigma_1 - \Sigma_0$.
\end{remark}

The Stein divergence with bilinear kernel between Gaussian distributions corresponds to a \enquote{pre-conditioned} (reverse) KL divergence (compare with \eqref{eq:KL_Gaussians}), so for this simple kernel, we do not get \enquote{state-space-awareness}.

\begin{theorem}[Stein divergence with bilinear kernel] \label{example:Stein_divergence}
    Suppose $\mu \sim \NN(0, \Sigma_0)$ and $\nu \sim \NN(0, \Sigma_1)$ are centered Gaussians, $K(x, y) = x^{\tT} A y + a$, and $\Sigma_0, \Sigma_1$ and $A$ all commute.
    Then with $R = \Sigma_1 \Sigma_0^{-1}$ we have
    \begin{equation*}
        D^{\Stein}(\mu \mid \nu)
        = \frac{1}{4} \tr\left(A^{-1}(R - I_d - \log(R))\right).
    \end{equation*}
    In particular, if $A = I_d$, then $D^{\Stein}(\mu \mid \nu) = \frac{1}{2} \KL(\nu \mid\mu)$.
\end{theorem}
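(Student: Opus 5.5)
We follow the same template as for \cref{thm:WKL} and \cref{thm:KWKL}. Take a purely quadratic potential $f(x) = \frac{1}{2} x^{\tT} B x$ with $B$ symmetric and $b = 0$, and choose $B$ to commute with $\Sigma_0$, $\Sigma_1$ and $A$. By \cref{lemma:Stein_vector_field_elliptic}, with $m = 0$, $b = 0$ and $\kappa_g = 1$, the Stein velocity field is the linear field
\begin{equation*}
v_t(x) = B \Sigma_t A x.
\end{equation*}
Since $a(Bm + b) = 0$, the kernel constant $a$ plays no role here. \cref{lemma:closure_affine_vector_fields} then shows that the Stein-dual geodesic stays Gaussian. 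Its mean satisfies $\dot m_t = S_t m_t$ with $m_0 = 0$, so $m_t \equiv 0$. Its covariance satisfies
\begin{equation*}
\dot\Sigma_t = B\Sigma_t A \Sigma_t + \Sigma_t A \Sigma_t B.
\end{equation*}

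Next we diagonalize everything in a common orthonormal eigenbasis of $\Sigma_0, \Sigma_1, A$, in which $B$ is also taken diagonal. Uniqueness of the ODE solution keeps $\Sigma_t$ diagonal in this basis, exactly as argued in the proof of \cref{thm:KWKL}. Componentwise, with eigenvalues $s_t, \alpha, \beta$, the ODE becomes the Riccati equation $\dot s_t = 2\alpha\beta s_t^2$. This has the explicit solution
\begin{equation*}
\frac{1}{s_t} = \frac{1}{s_0} - 2\alpha\beta t.
\end{equation*}
Imposing $s_1$ at $t=1$ gives
\begin{equation*}
\beta = \frac{1}{2\alpha}\Big(\frac{1}{s_0} - \frac{1}{s_1}\Big),
\end{equation*}
so $B$ is uniquely determined within this commuting ansatz. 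Since $1/s_t$ interpolates affinely between the two positive numbers $1/s_0$ and $1/s_1$, it stays positive on $[0,1]$. Hence the solution exists on the whole interval, which is the well-posedness point that needs checking.

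Then we evaluate \eqref{eq:divergence_elliptic_formula} with $m_t \equiv 0$ and $b = 0$:
\begin{equation*}
D^{\Stein}(\mu \mid \nu) = \tfrac{1}{2}\tr(B\Sigma_1) - \tfrac{1}{2}\int_0^1 \tr(B\Sigma_t)\, \d t .
\end{equation*}
Per eigen-direction, write $r = s_1/s_0$ for the corresponding eigenvalue of $R$. For $\beta \neq 0$ the integral is
\begin{equation*}
\int_0^1 \frac{\d t}{1/s_0 - 2\alpha\beta t} = \frac{1}{2\alpha\beta}\ln r .
\end{equation*}
Therefore the contribution of that direction is
\begin{equation*}
\tfrac{1}{2}\beta s_1 - \tfrac{1}{4\alpha}\ln r = \tfrac{1}{4\alpha}\big(r - 1 - \ln r\big).
\end{equation*}
The case $\beta = 0$ corresponds to $r = 1$ and contributes $0$, which is consistent with the same expression. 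Summing over eigen-directions gives $\frac{1}{4}\tr\big(A^{-1}(R - I_d - \log R)\big)$.

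For $A = I_d$ we compare with \eqref{eq:KL_Gaussians} with the roles swapped. For centered Gaussians,
\begin{equation*}
\KL(\nu\mid\mu) = \tfrac{1}{2}\big[\tr(\Sigma_0^{-1}\Sigma_1) - d - \ln\det(\Sigma_1\Sigma_0^{-1})\big] = \tfrac{1}{2}\tr(R - I - \log R).
\end{equation*}
This is twice our expression, as claimed.

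The main obstacle is the reduction to the commuting ansatz. Uniqueness of the dual geodesic is assumed by the paper's standing conjecture, so we only need to exhibit one geodesic of the quadratic form. The real checks are that this candidate is admissible, namely that it exists on $[0,1]$ and that $B$ commutes with $\Sigma_t$ for all $t$. Both follow from the explicit scalar Riccati solution above.
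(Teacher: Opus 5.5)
Your proposal is correct and follows essentially the same route as the paper: quadratic potential with $b=0$ so that $m_t\equiv 0$, simultaneous diagonalization reducing the covariance flow to the scalar Riccati equation $\dot s_t = 2\alpha\beta s_t^2$, solving $\beta = \frac{1}{2\alpha}\bigl(s_0^{-1}-s_1^{-1}\bigr)$ from the endpoints, and evaluating \eqref{eq:divergence_elliptic_formula} direction by direction. Your explicit treatment of the $\beta=0$ directions and the positivity of $1/s_t$ on $[0,1]$ correspond to the paper's a posteriori remark that the solution exists on $[0,1]$.
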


\begin{proof}
   Setting $b = 0$ in \eqref{eq:mean-covariance_ODE} and plugging in the Stein vector field \eqref{eq:velocity_field_Stein_gen_bilinear} yields
\begin{equation*}
    \begin{cases}
        \dot{m}_t
        = B (\kappa_g \Sigma_t + m_t m_t^{\tT}) A m_t, \\
        \dot{\Sigma}_t
        = 2\Sym\big(\Sigma_t B (\kappa_g \Sigma_t + m_t m_t^{\tT}) A\big).
    \end{cases}
\end{equation*}
First, $m_0 = m_1 = 0$ implies that $\dot{m}_t = m_t =  0$, so we only have to focus on the covariance equation.

Since $A$, $\Sigma_0$, $\Sigma_1$ are simultaneously diagonalizable, we can work in this basis to reduce this ODE system to
\begin{equation*}
    \dot{\sigma}_i(t)
    = 2 \beta_i \kappa_g \sigma_i(t)^2 a_i.
\end{equation*}
where for $i \in \{ 1, \ldots, d \}$ and $t \ge 0$, $a_i > 0$, $\sigma_i(t) > 0$, and $\beta_i$ are the eigenvalues of $A$, $\Sigma_t$ and $B$, respectively.
The solution is given by
\begin{equation*}
    \sigma_i(t)
    = \frac{1}{\sigma_i(0)^{-1} - 2 \kappa_g a_i \beta_i t} , \qquad i \in \{ 1, \ldots, d \}.
\end{equation*}
Solving for $\beta_i$ in terms of the boundary conditions yields
\begin{equation*}
    \beta_i
    = \frac{1}{2 \kappa_g a_i} \left( \sigma_{i}(0)^{-1} - \sigma_{i}(1)^{-1}\right), \qquad i \in \{ 1, \ldots, d \},
\end{equation*}
which can be written in matrix form as
\begin{equation*}
    B
    = \frac{1}{2 \kappa_g} A^{-1} \left( \Sigma_0^{-1} - \Sigma_1^{-1}\right)
\end{equation*}
This also shows a posteriori that the solution for $\sigma_i$ exists on $[0, 1]$, so everything is well-defined.
By \eqref{eq:divergence_elliptic_formula},
\begin{align*}
    D^{\Stein}(\mu \mid \nu)
    & = \frac{\kappa_g}{2} \left( \tr(B \Sigma_1) - \int_{0}^{1} \tr(B \Sigma_t) \d{t}\right)
    = \frac{\kappa_g}{2} \sum_{i = 1}^{d}  \beta_i \left(\sigma_{i}(1) - \int_{0}^{1} \sigma_i(t) \d{t} \right) \\
    & = \sum_{i = 1}^{d} \frac{1}{4 a_i} \left( \frac{\sigma_i(1)}{\sigma_i(0)} - 1 - \ln\left( \frac{\sigma_i(1)}{\sigma_i(0)}\right)\right).\qedhere
\end{align*}
\end{proof}

We leave the exploration of the Stein divergence for different kernels for future work.

%% file: appendix_oc.tex
\section{Linear Quadratic Optimal Control with Discounted Cost} \label{appedix:discounted LQR}
This appendix provides a concise, self-contained statement and proof of the Bellman optimality equation along with its solution for the discounted infinite-horizon linear--quadratic regulator problem.

Let the (measurable) state space be $\mathcal X\subseteq\mathbb R^n$ and the (measurable) action space be $\mathcal U\subseteq\mathbb R^m$.  We consider controlled Markov dynamics
\begin{equation}\label{eq:dynamics}
    x_{k+1}=f(x_k,u_k,w_k),
\end{equation}
where $w_k$ are i.i.d. random disturbances (on a probability space $(\Omega,\mathcal F,\mathbb P)$), and the initial state $x_0$ satisfies $\mathbb{E}[x_0]=\mu_0$ and $\mathbb{E}[(x_0-\mu_0)(x_0-\mu_0)^\top]=\Sigma_0$.
A memory-less (static) policy $\pi=(\pi_0,\pi_1,\dots)$ is a sequence of measurable maps $\pi_k:\mathcal X\to\mathcal U$ and the discounted cost under policy $\pi$ is
\begin{equation}\label{eq:cost}
   J(\pi) := \mathbb E\left[\sum_{k=0}^\infty \gamma^k \ell(x_k,u_k)\right],
\end{equation}
where $\ell:\mathcal X\times\mathcal U\to[0,\infty)$ is the stage cost, $u_k=\pi_k(x_k)$ and $\gamma\in(0,1)$ is the discount factor.
For a given policy $\pi$, define the value function $V^{\pi}$ as
\begin{equation}\label{eq:value-policy}
   V^{\pi}(x) := \mathbb E\left[\sum_{k=0}^\infty \gamma^k \ell(x_k,u_k)\;\Big| x_0=x\;\right]
\end{equation}
and the optimal value function
\begin{equation}\label{eq:value-opt}
    V^*(x) := \inf_{\pi} V^{\pi}(x),\qquad x\in\mathcal X.
\end{equation}
Note that 
\begin{equation*}
    J(\pi)=\mathbb E\left[\sum_{k=0}^\infty \gamma^k \ell(x_k,u_k)\right]=\mathbb E\left[\mathbb E\left[\sum_{k=0}^\infty \gamma^k \ell(x_k,u_k)\Big|x_0\right]\right]=\mathbb E\left[V^{\pi}(x_0)\right].
\end{equation*}
We assume the standard discounted LQR conditions: $(A,B)$ is stabilizable and $(A,Q^{1/2})$ is detectable, ensuring existence and uniqueness of optimal linear feedback policies for the cost defined below \cite{anderson2007optimal}.
The following result is classical in dynamic programming and optimal control; see, for example, \cite{B2011}.\footnote{See also the expository blog post by Stephen Tu for an intuitive derivation:
\url{https://stephentu.github.io/blog/control-theory/optimal-control/2017/12/09/discounted-infinite-horizon-lqr.html}.}.
We include a self-contained proof here, both to fix notation and because the result in the precise form and setting used in this paper is not readily available in the literature.
\begin{theorem}\label{thm:bellman}
Let $\mathcal X=\mathbb R^n$, $\mathcal U=\mathbb R^m$ and consider the problem of minimizing $J(\pi)$ given in \eqref{eq:cost} with quadratic stage cost
\begin{align*}
    \ell(x,u) = \frac{1}{2}\left(x^\top Q x + u^\top R u\right),\qquad Q\succeq0,\ R\succ0,
\end{align*}
discount factor $\gamma\in(0,1)$ and linear Markovian dynamics
\begin{equation*}
    x_{k+1}=A x_k + B u_k + w_k
\end{equation*}
where $A\in\mathbb R^{n\times n}$ and $B\in\mathbb R^{n\times m}$.
Assume that the initial state $x_0$ and the exogenous disturbances $w_k$ satisfy the following statistical properties:
\begin{enumerate}
    \item $\mathbb E[x_0]=\mu_0$ and $\mathbb E[(x_0-\mu_0)(x_0-\mu_0)^{\top}]=\Sigma_0$ for $\Sigma_0\succeq 0$ and
    \item $w_k$ are i.i.d. with $\mathbb E[w_k]=0$ and $\mathbb E[w_k w_k^\top]=\Sigma_w$ for $\Sigma_w \succeq 0$.
\end{enumerate}
Consider the value function $V^{\pi}$ under a given policy $\pi$ as defined in \eqref{eq:value-policy} and the optimal value function $V^*$ as defined in \eqref{eq:value-opt}.
Assume that the pair $(A,B)$ is stabilizable and $(A,Q^{1/2})$ is detectable.
Then the optimal value function $V^*$ satisfies the Bellman equation
\begin{equation}\label{eq:bellman}
    V^*(x) = \inf_{u\in\mathcal U} \Big\{\ell(x,u) + \gamma\,\mathbb E\big[ V^*(x_1) \mid x_0=x,\ u_0=u\big]\Big\}, \quad \quad x\in \mathcal{X}.
\end{equation}
Furthermore, the quadratic optimal value function $V^*(x)=\frac{1}{2}\left(x^\top P x + \frac{\gamma}{1-\gamma} \tr(P\Sigma_w)\right)$ satisfies the Bellman equation \eqref{eq:bellman} where $P$ is the unique positive semi-definite solution to the algebraic Riccati equation
\begin{align}
    A^\top_{\gamma} P A_{\gamma} -P + Q - A_{\gamma}^{\top}PB(B^{\top}PB+R_{\gamma})^{-1}B^{\top}PA_{\gamma}=0, \label{eq:Riccati}
\end{align}
where $A_{\gamma}=\sqrt{\gamma}A$ and $R_{\gamma}=\frac{1}{\gamma}R$.
The optimal cost is given by
\begin{align*}
    J^*=J(\pi^*)=\inf_{\pi} J(\pi)= E\left[V^*(x_0)\right],
\end{align*}
and the the optimal policy $\pi^*$ (time-invariant) is given by $\pi^*=(\pi_F,\pi_F,\cdots)$ with $\pi_F: \mathbb{R}^n \ni x \mapsto u=Fx \in \mathbb{R}^m$ where $F=-\big(R + \gamma B^\top P B\big)^{-1} \gamma B^\top P A$.
\end{theorem}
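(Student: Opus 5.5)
The plan is to reduce the problem to the undiscounted LQR through the scaling $A_\gamma=\sqrt{\gamma}A$, $R_\gamma=R/\gamma$, and then verify the quadratic ansatz directly.

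\emph{Well-posedness of the Riccati equation.} First I would check that \eqref{eq:Riccati} has a unique positive semidefinite stabilizing solution $P$.
Stabilizability of $(A,B)$ implies stabilizability of $(A_\gamma,B)$. Indeed, if $\rho(A+BK)<1$, then $\rho(\sqrt\gamma(A+BK))<1$.
Detectability of $(A,Q^{1/2})$ transfers to $(A_\gamma,Q^{1/2})$ by the PBH test, since eigenvectors are unchanged and the magnitudes of unstable eigenvalues only shrink.
The standard DARE theory (Anderson--Moore) then gives a unique $P\succeq0$ solving \eqref{eq:Riccati}, with $A_\gamma+BF_\gamma$ Schur stable, where $F_\gamma=-(B^\top PB+R_\gamma)^{-1}B^\top PA_\gamma$.

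\emph{Bellman equation for the quadratic ansatz.} Next I would plug $V(x)=\frac12(x^\top Px+c)$ into the right-hand side of \eqref{eq:bellman}.
Because $w$ is independent of $(x,u)$ and has zero mean, $\mathbb E[V(Ax+Bu+w)]=\frac12\big((Ax+Bu)^\top P(Ax+Bu)+\tr(P\Sigma_w)+c\big)$.
The bracket in \eqref{eq:bellman} is therefore a strictly convex quadratic in $u$, since $R+\gamma B^\top PB\succ0$. Its minimizer is $u=Fx$ with $F=-(R+\gamma B^\top PB)^{-1}\gamma B^\top PA$.
Substituting this minimizer, the $x$-quadratic terms match exactly because of \eqref{eq:Riccati}; multiply the equation through by $\gamma$ to convert $A_\gamma$ and $R_\gamma$ back.
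The constant terms give $c=\gamma(\tr(P\Sigma_w)+c)$, hence $c=\frac{\gamma}{1-\gamma}\tr(P\Sigma_w)$.

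\emph{Verification that $V=V^*$ (the main obstacle).} The main difficulty is showing that this $V$ is actually the optimal value, not merely some solution of the Bellman equation. I would argue by finite-horizon telescoping.
For any policy $\pi$ with finite cost, iterating the inequality $V(x_k)\le\ell(x_k,u_k)+\gamma\mathbb E[V(x_{k+1})\mid x_k]$ gives
\[
V(x)\le\mathbb E\Big[\sum_{k<N}\gamma^k\ell(x_k,u_k)\Big]+\gamma^N\mathbb E[V(x_N)].
\]
I then need $\gamma^N\mathbb E[V(x_N)]\to0$. I would restrict attention to policies with $J(\pi)<\infty$. For these, $\gamma^k\mathbb E[x_k^\top Qx_k]$ is summable, and detectability upgrades this to $\gamma^k\mathbb E\|x_k\|^2\to0$ by the same PBH-type argument as above, using $\gamma$-discounted observability of the undetectable-free modes. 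Since $\gamma^N c\to0$ trivially, the remainder term vanishes, and so $V\le V^\pi$ for every such policy.

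For $\pi_F$, equality holds at every step of the telescoping. Moreover $\sqrt\gamma(A+BF)$ is Schur stable, so $\gamma^N\mathbb E\|x_N\|^2\to0$ because the second moments grow at most like $\rho(A+BF)^{2N}$ plus a bounded noise contribution. Hence $V^{\pi_F}=V=V^*$, and taking expectations over $x_0$ gives $J^*=\mathbb E[V^*(x_0)]$.

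Finally, the claim that $V^*$ satisfies \eqref{eq:bellman} is the standard dynamic-programming principle. Here it follows directly, because $V^*=V$ and $V$ has already been shown to satisfy \eqref{eq:bellman}.
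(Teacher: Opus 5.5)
Your proof is correct, but it runs in the opposite direction from the paper's.

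\textbf{The paper's route.} It first proves the Bellman equation for $V^*$ abstractly, by a dynamic-programming sandwich. The lower bound comes from $V^{\pi^+}\ge V^*$, and the upper bound from gluing $\varepsilon$-optimal continuation policies $\pi^{\varepsilon,x_1}$. Only then does it substitute the quadratic form $\frac12(x^\top Px+c)$ for $V^*$, match coefficients to get \eqref{eq:Riccati} and $c=\frac{\gamma}{1-\gamma}\tr(P\Sigma_w)$, and appeal to uniqueness of the positive semidefinite Riccati solution.

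\textbf{Your route.} You construct the candidate from DARE theory, after checking that stabilizability and detectability transfer to $(A_\gamma,B)$ and $(A_\gamma,Q^{1/2})$. You verify that it solves the Bellman equation. You then prove a verification theorem by telescoping together with the transversality condition $\gamma^N\mathbb E[V(x_N)]\to0$.

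\textbf{What each buys.} The paper's first step is more general: it does not use the LQ structure at all. Your route, however, is what actually establishes the LQ conclusions. The paper never proves that $V^*$ is quadratic or that $V^{\pi_F}=V^*$, and a solution of the Bellman equation is not automatically the value function. Your transversality argument is precisely the missing ingredient. You also avoid the paper's gluing step. That glued policy depends on $x_1$ at all later times, so it is not memoryless in the sense of the paper's policy class, and it implicitly needs a measurable selection.

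\textbf{Two steps to tighten.}

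First, for an arbitrary finite-cost policy the mechanism is output injection rather than PBH. Choose $L$ with $A_\gamma+LQ^{1/2}$ Schur. Then $z_k=\gamma^{k/2}x_k$ satisfies $z_{k+1}=(A_\gamma+LQ^{1/2})z_k-LQ^{1/2}z_k+\sqrt\gamma\,\gamma^{k/2}Bu_k+\gamma^{(k+1)/2}w_k$. The forcing is square-summable in $L^2(\Omega)$ only because $R\succ0$ and $V^\pi(x)<\infty$ also give $\sum_k\gamma^k\mathbb E\|u_k\|^2<\infty$; summability of the state cost alone does not suffice. With that, $\mathbb E\|z_k\|^2\to0$. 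The same control bound, by induction from the deterministic $x_0=x$, gives $\mathbb E\|x_k\|^2<\infty$ for each $k$, which the telescoping also needs.

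Second, for $\pi_F$ the noise contribution to $\mathbb E\|x_N\|^2$ is not bounded in general. Only $\sqrt\gamma(A+BF)=A_\gamma+BF_\gamma$ is guaranteed Schur, and $\rho(A+BF)\ge1$ can occur. The contribution is at most $C\,N\max\{1,(\rho(A+BF)+\epsilon)^{2N}\}$. The factor $\gamma^N$ still kills it once $\epsilon$ is small enough that $\gamma(\rho(A+BF)+\epsilon)^2<1$.
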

\begin{proof}
    Let us first note that the stabilizability of $(A,B)$ implies that there exists a matrix $F\in \mathbb{R}^{m\times n}$ such that the matrix $(A+BF)$ is Schur. This means that there exists a linear time-invariant policy $\pi=(\pi_F,\pi_F,\cdots)$ with $\pi_F: \mathbb{R}^n \ni x_k \mapsto u_k=Fx_k \in \mathbb{R}^m$ for all $k$, such that $V^{\pi}(x)<\infty$ for any $x\in \mathbb{R}^n$.
    Thus, the optimization problem is well-defined.
    
    By using the tower property of conditional expectation when conditioning on $x_1$, we get that for any policy $\pi=(\pi_0,\pi_1,\pi_2,\dots)$,
    \begin{align}
        V^{\pi}(x) = \mathbb E\left[\sum_{k=0}^\infty \gamma^k \ell(x_k,u_k) \;\Big|\; x_0=x \right]  &=\ell(x,\pi_0(x))+\gamma \mathbb E\left[\sum_{k=0}^\infty \gamma^k \ell(x_{k+1},u_{k+1}) \;\Big|\; x_0=x \right] \nonumber\\
        &=\ell(x,\pi_0(x))+\gamma \mathbb E\left[\mathbb E\left[\sum_{k=0}^\infty \gamma^k \ell(x_{k+1},u_{k+1}) \Big|  x_1\right] \;\Big|\; x_0=x \right] \nonumber\\
        &=\ell(x,\pi_0(x)) + \gamma\,\mathbb E\big[ V^{\pi^+}(x_1) \mid x_0=x\big], \label{eq:policy-dp}
    \end{align}
    where $\pi^+=(\pi_1,\pi_2,\pi_3,\cdots)$.
    We now prove the Bellman equation \eqref{eq:bellman} by showing 
    \begin{equation}
        \inf_{u\in \mathcal{U}}\{\ell(x,u)+\gamma\,\mathbb E[V^*(x_1)\mid x_0=x,u_0=u]\} \geq V^*(x) \geq \inf_{u\in \mathcal{U}}\{\ell(x,u)+\gamma\,\mathbb E[V^*(x_1)\mid x_0=x,u_0=u]\}. \label{eq:V_star_sandwich}
    \end{equation}
    Using \eqref{eq:policy-dp} and the fact that $V^*\le V^{\pi}$ pointwise for any $\pi$, we get that
\begin{align*}
    V^{\pi}(x) &= \ell(x,\pi_0(x)) + \gamma\,\mathbb E\big[V^{\pi^+}(x_1)\mid x_0=x\big]\ge \ell(x,\pi_0(x)) + \gamma\,\mathbb E\big[V^*(x_1)\mid x_0=x\big].
\end{align*}
Taking the infimum over all policies (equivalently over the first-stage action $u=\pi_0(x)$ on the RHS) yields the lower bound
\begin{align}
    V^*(x) \geq \inf_{u\in \mathcal{U}}\{\ell(x,u)+\gamma\,\mathbb E[V^*(x_1)\mid x_0=x,u_0=u]\}. \label{eq:V_star_lb}
\end{align}
We now prove the upper bound of $V^*$ in \eqref{eq:V_star_sandwich}.
By the definition of $V^*(x)$, we get that for any $x\in \mathcal{X}$ and any $\varepsilon>0$, there exists a policy $\pi^{\varepsilon,x}$ such that
\begin{equation*}
    V^{\pi^{\varepsilon,x}}(x) \le V^*(x) + \varepsilon. \label{eq:eps_opt_policy}
\end{equation*}
We can now construct a new causal policy $\pi^{\varepsilon} =(\pi^{\varepsilon}_0,\pi^{\varepsilon}_1,\pi^{\varepsilon}_2,\cdots)$, where $\pi^{\varepsilon}_0:\mathcal{X} \rightarrow \mathcal{U}$ is an arbitrary map and $(\pi^{\varepsilon}_1,\pi^{\varepsilon}_2,\cdots)=\pi^{\varepsilon,x_1}$ which depends on the realization $x_1$. 
Using \eqref{eq:policy-dp}, we get that
\begin{align*}
    V^*(x)\leq V^{\pi^{\varepsilon}}(x)
    & = \ell(x,\pi^{\varepsilon}_0(x)) + \gamma\,\mathbb E\big[V^{\left(\pi^{\varepsilon}\right)^+}(x_1)\mid x_0=x\big] \\
    &= \ell(x,\pi^{\varepsilon}_0(x)) + \gamma\,\mathbb E\big[V^{\pi^{\varepsilon,x_1}}(x_1)\mid x_0=x\big]\\
    &\leq \ell(x,\pi^{\varepsilon}_0(x)) + \gamma\,\mathbb E\big[V^*(x_1)\mid x_0=x\big] + \gamma \varepsilon.
\end{align*}
Since the above inequality holds for any map $\pi^{\varepsilon}_0$ and arbitrary $\varepsilon>0$, we can optimize over maps $\pi^{\varepsilon}_0$ (effectively over $u\in \mathcal{U}$) and take the limit as $\varepsilon$ goes to $0$ to obtain
\begin{align}
    V^*(x)\leq \inf_{u\in \mathcal{U}}\left(\ell(x,u) + \gamma\,\mathbb E\big[V^*(x_1)\mid x_0=x, u_0=u\big]\right).\label{eq:V_star_ub}
\end{align}
This completes the proof of \eqref{eq:V_star_sandwich}, which proves \eqref{eq:bellman}.

We now plug in the quadratic form of $V^*=\frac{1}{2}\left(x^{\top}Px + c\right)$, along with the quadratic form of the stage cost $\ell$ as well as the expression for $x_1$ obtained from the system dynamics \eqref{eq:dynamics} into the Bellman equation \eqref{eq:bellman} to get
\begin{align}
    x^{\top}Px + c = \inf_{u \in \mathcal{U}} \left(\begin{bmatrix}
        x^{\top} & u^{\top}
    \end{bmatrix}\begin{bmatrix}
        \gamma A^{\top}PA + Q & \gamma A^{\top}PB \\
        \gamma B^{\top}PA & \gamma B^{\top}PB +R
    \end{bmatrix}\begin{bmatrix}
        x \\u
    \end{bmatrix}\right) + \gamma \tr(P\Sigma_w)+\gamma c \label{eq:toward_riccati}
\end{align}
The optimization problem over $\mathcal{U}$ can be solved to obtain the optimal $u^*=\overbrace{-\big(R + \gamma B^\top P B\big)^{-1} \gamma B^\top P A}^F x$ which gives us the optimal policy $\pi^*$ such that $V^{\pi^*}=V^*$.
Plugging this back into equation \eqref{eq:toward_riccati} gives the Riccati equation \eqref{eq:Riccati} and $c=\frac{\gamma}{1-\gamma}\tr(P\Sigma_w)$.
Under stabilizability and detectability assumptions, the Riccati equation has a unique positive semi-definite solution \cite{anderson2007optimal}.
Finally, note that $J(\pi)=\mathbb{E}[V^{\pi}(x_0)]$.
Since $V^{\pi}(x)\geq V^{*}(x)$ for any $x$ and for any $\pi$, we get that $\inf_{\pi}J(\pi)=\inf_{\pi}\mathbb{E}[V^{\pi}(x_0)]\geq\mathbb{E}[V^*(x_0)]$.
On the other hand, since there exists a policy $\pi^*$ such that $V^{\pi^*}=V^*$, we get that $\mathbb{E}[V^*(x_0)]=\mathbb{E}[V^{\pi^*}(x_0)]\geq\inf_{\pi}\mathbb{E}[V^{\pi}(x_0)]= \inf_{\pi}J(\pi)$. This shows that $\inf_{\pi}J(\pi)=\mathbb{E}[V^*(x_0)]$.

\end{proof}
\subsection{Proof of \cref{corr:LQR_all}}\label{proof:LQR_all}
\begin{proof}
  First note that since $B$ has full column rank, $R_{\circ}\succ 0$ for $\circ \in \{ \FR, \O, \KW \}$.
  The first part of the statement is a straightforward application of Theorem~\ref{thm:bellman}.
  Now assume $\Sigma_w=\rho I$ and $\lambda=1$.
  With this choice, we get that,
  \begin{align*}
      R_{\KW}=\frac{1}{\rho+1}B^\top B &\rightarrow B^\top B=R_{\O}   &&\textnormal{ as } \rho \rightarrow 0\quad \textnormal{ and }\\
      \|R_{\KW}-R_{\FR}\|=\frac{1}{\rho\left(\rho+1\right)}\|B^\top B \| &\rightarrow 0     &&\textnormal{ as } \rho \rightarrow \infty.
  \end{align*}
  Using the perturbation analysis of DARE (see \cite{S1998}), we get the limiting situations 1. and 2.
  Finally, if the spectral radius of $A$ is less than $\frac{1}{\sqrt{\gamma}}$, then the spectral radius of $A_{\gamma}=\sqrt{\gamma} A$ is less than 1.
  This implies that the Lyapunov equation
  \begin{align}\label{eq:Lyapunov}
      A^\top_{\gamma} P A_{\gamma} -P + Q =0
  \end{align}
  has a positive definite solution.
  Since $R_{\FR}$ grows unbounded as $\rho \rightarrow 0$, the term $ (B^{\top}P_{\FR}B+R_{\gamma})^{-1}$ approaches $0$ and the solution of the Riccati equation \eqref{eq:Riccati} converges to the solution of the Lyapunov equation \eqref{eq:Lyapunov}, and thus remains bounded.
  Therefore, the optimal gain $F_{KL}$ given by \eqref{eq:optimal_policy} converges to $0$ as $\rho \rightarrow 0$.
  This completes the proof.
\end{proof}
\subsection{Counter example showing the lack of decomposition into stage-wise costs}\label{subsec:counter_example}
Consider the scalar controlled dynamical system
\begin{equation}
x_{t+1} = x_t + u_t + v_t, \qquad x_0 = 0,
\end{equation}
where $v_1,v_2 \stackrel{\text{iid}}{\sim} \mathcal N(0,1)$.
For a control sequence $u=(u_1,u_2)$, the resulting trajectory $(x_1,x_2)$ satisfies
\begin{align}
x_1 &= u_1 + v_1, \\
x_2 &= x_1 + u_2 + v_2.
\end{align}
Hence the joint law of $(x_1,x_2)$ under control $u$ is Gaussian with mean and covariance
\begin{equation}
p_u(x_1,x_2)
=
\mathcal N\!\left(
\begin{bmatrix}
u_1\\
u_1+u_2
\end{bmatrix},
\begin{bmatrix}
1 & 1\\
1 & 2
\end{bmatrix}
\right).
\end{equation}
Equivalently, the density can be written as
\begin{equation}
p_u(x_1,x_2)
=
\frac{1}{2\pi}
\exp\!\left(
-\frac12\Big[(x_1-u_1)^2 + (x_2-x_1-u_2)^2\Big]
\right).
\end{equation}
For the zero-control reference trajectory $u=(0,0)$, we obtain
\begin{equation}
q(x_1,x_2)
=
\mathcal N\!\left(
\begin{bmatrix}
0\\
0
\end{bmatrix},
\begin{bmatrix}
1 & 1\\
1 & 2
\end{bmatrix}
\right),
\end{equation}
or equivalently
\begin{equation}
q(x_1,x_2)
=
\frac{1}{2\pi}
\exp\!\left(
-\frac12\Big[x_1^2 + (x_2-x_1)^2\Big]
\right).
\end{equation}

The corresponding marginal distributions are
\begin{align}
p_u(x_1) &= \mathcal N(u_1,1), &
q(x_1) &= \mathcal N(0,1),
\end{align}
that is,
\begin{align}
p_u(x_1) &= \frac{1}{\sqrt{2\pi}}
\exp\!\left(-\frac{(x_1-u_1)^2}{2}\right), &
q(x_1) &= \frac{1}{\sqrt{2\pi}}
\exp\!\left(-\frac{x_1^2}{2}\right).
\end{align}
Moreover, the one-step conditional laws are
\begin{align}
p_u(x_2 \mid x_1) &= \mathcal N(x_1+u_2,1), &
q(x_2 \mid x_1) &= \mathcal N(x_1,1),
\end{align}
or explicitly
\begin{align}
p_u(x_2 \mid x_1)
&=
\frac{1}{\sqrt{2\pi}}
\exp\!\left(-\frac{(x_2-x_1-u_2)^2}{2}\right), \\
q(x_2 \mid x_1)
&=
\frac{1}{\sqrt{2\pi}}
\exp\!\left(-\frac{(x_2-x_1)^2}{2}\right).
\end{align}
Note that
\begin{align*}
    D^{\WKL}\left(p_u(x_1,x_2)\,\|\,q(x_1,x_2)\right)
    &=\frac{1}{2}\left(2u_1^2+u_2^2+2u_1u_2\right)
    \\
    & \neq \frac{1}{2}\left(u_1^2+u_2^2\right)=D^{\WKL}\left((p_u(x_1)\,\|\,q(x_1)\right)+D^{\WKL}\left((p_u(x_2\mid x_1)\,\|\,q(x_2\mid x_1)\right).
\end{align*}
One can show this similarly for $D^{\KW}$.
In contrast we have that 
\begin{align*}
    D^{\KL}\left(p_u(x_1,x_2)\,\|\,q(x_1,x_2)\right)
    =D^{\KL}\left((p_u(x_1)\,\|\,q(x_1)\right)+D^{\KL}\left((p_u(x_2\mid x_1)\,\|\,q(x_2\mid x_1)\right).
\end{align*}

\subsection{Matrices introduced in \cref{eq: T-step_pred}} \label{subsec:stacked_matrices}
\begin{equation}\label{eq:stacked_matrices}
{\mathcal{A}}=
\begin{bmatrix}
A \\
A^2 \\
\vdots \\
A^{T-1}
\end{bmatrix}, \,
\mathcal{B}_u=
\begin{bmatrix}
B & 0 & \cdots &0\\
AB & B & \ddots & \vdots\\
\vdots & \ddots & \ddots &0\\
A^{T-2}B & \cdots & AB & B
\end{bmatrix}, \,
\mathcal{B}_w=
\begin{bmatrix}
I & 0 & \cdots &0\\
A & I & \ddots & \vdots\\
\vdots & \ddots & \ddots &0\\
A^{T-2} & \cdots & A & I
\end{bmatrix}.
\end{equation}
%

\subsection{Control penalty matrices under path-integral regularization} \label{subsec:control_penalty}

\begin{equation} \label{eq:calR_circ}
\begin{aligned}
    \mathcal{R}_{\FR} &= \mathcal{B}_u^{\tT} \left(\mathcal{B}_w(I \otimes \Sigma_w)\mathcal{B}_w^\top\right)^{-1} \mathcal{B}_u, \quad
    \mathcal{R}_{\O} =  \mathcal{B}_u^{\tT}  \mathcal{B}_u, \quad
    \mathcal{R}_{\KW} = \mathcal{B}_u^{\tT} \left(\mathcal{B}_w(I \otimes \Sigma_w)\mathcal{B}_w^\top+\lambda I\right)^{-1} \mathcal{B}_u.
\end{aligned}
\end{equation}

\subsection{Optimal control with path-integral formulation} \label{subsec:remark52proof}
\begin{theorem}[Optimal control under path-integral regularization] \label{theom:QP_all}
Consider the LTI system \eqref{eq: LTI dynamics} with fixed initial state $x_0\in \mathbb{R}^n$, and i.i.d. process noise $w_t \sim \mathcal{N}(0,\Sigma_w)$. Let $\lambda > 0$ and $\mathcal{R}_{\circ}$ be as defined in \eqref{eq:calR_circ} for $\circ \in \{ \FR, \O, \KW \}$. 
The optimal control sequence $u_{\circ}$ minimizing $V^\circ$ is
\begin{align} \label{eq:optimal_control}
    u_{\circ} = - (\mathcal{R}_\circ +  \mathcal{B}_u^\top \mathcal{Q} \mathcal{B}_u)^{-1} \mathcal{B}_u^\top \mathcal{Q} \mathcal{A}x_0.
\end{align}
In particular, $u_{\O}$ does not depend on $\Sigma_w$.
If $\Sigma_w = \rho I$ for some $\rho>0$ and $\lambda=1$, then
\begin{enumerate}
    \item
    $\lim_{\rho \searrow 0} u_{\KW} = u_{\O}$.   
    \item
    $\lim_{\rho \to \infty}\|u_{\KW}-u_{\FR}\| = 0$.
    \item
    $ \lim_{\rho \searrow 0} u_{\FR} = 0$ while $\lim_{\rho \searrow 0}u_{\KW}=u_{\O}$ is not zero in general.
\end{enumerate}
\end{theorem}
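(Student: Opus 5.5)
Since the stacked trajectory $x$ is an affine function of $u$, the objective is an explicit quadratic in $u$, so the first step is to minimize it directly and then read off the limits as perturbations of a linear system. From $x=\mathcal{A}x_0+\mathcal{B}_u u+\mathcal{B}_w w$ with $\E[w]=0$ and $\operatorname{Cov}(w)=I\otimes\Sigma_w$, the expected state cost is
\begin{equation*}
\tfrac12(\mathcal{A}x_0+\mathcal{B}_u u)^{\tT}\mathcal{Q}(\mathcal{A}x_0+\mathcal{B}_u u)+\tfrac12\tr\big(\mathcal{Q}\mathcal{B}_w(I\otimes\Sigma_w)\mathcal{B}_w^{\tT}\big).
\end{equation*}
The trace term does not depend on $u$. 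The divergence term equals $\tfrac12 u^{\tT}\mathcal{R}_\circ u$ with $\mathcal{R}_\circ$ as in \eqref{eq:calR_circ}. To confirm this, note that $p(x)$ and $p(x_{uc})$ are Gaussians with the common covariance $\Sigma_x\coloneqq\mathcal{B}_w(I\otimes\Sigma_w)\mathcal{B}_w^{\tT}$ and mean difference $\mathcal{B}_u u$. For $\FR$ we use \eqref{eq:KL_Gaussians}, for $\O$ the equal-covariance case of \cref{thm:WKL} (where $R=I$ and $Q+2\log(R)^\perp$ reduces to $2I$, giving $\tfrac12\|\Delta m\|^2$), and for $\KW$ the equal-covariance corollary \eqref{eq:KWKL_same-var}.

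Hence $V^\circ(u)=\tfrac12 u^{\tT}(\mathcal{R}_\circ+\mathcal{B}_u^{\tT}\mathcal{Q}\mathcal{B}_u)u+u^{\tT}\mathcal{B}_u^{\tT}\mathcal{Q}\mathcal{A}x_0+\text{const}$. We need the Hessian $H_\circ\coloneqq\mathcal{R}_\circ+\mathcal{B}_u^{\tT}\mathcal{Q}\mathcal{B}_u$ to be positive definite. This holds because $\mathcal{Q}\succeq0$ and $\mathcal{R}_\circ\succ0$. The latter follows from three facts: $\mathcal{B}_u$ is block lower triangular with diagonal blocks $B$, so it has full column rank when $B$ does (an assumption we import, as in \cref{corr:LQR_all}); $\Sigma_x\succ0$, since $\mathcal{B}_w$ is unit lower triangular and hence invertible; and $\Sigma_x+\lambda I\succ0$. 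Setting the gradient to zero then gives \eqref{eq:optimal_control}. The independence of $u_\O$ from $\Sigma_w$ is immediate.

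For the limits, set $\Sigma_w=\rho I$ and $\lambda=1$, and write $G\coloneqq\mathcal{B}_w\mathcal{B}_w^{\tT}\succ0$. Then
\begin{equation*}
\mathcal{R}_\FR=\rho^{-1}\mathcal{B}_u^{\tT}G^{-1}\mathcal{B}_u,\qquad \mathcal{R}_\KW=\mathcal{B}_u^{\tT}(\rho G+I)^{-1}\mathcal{B}_u .
\end{equation*}
Item 1 follows because $\mathcal{R}_\KW\to\mathcal{B}_u^{\tT}\mathcal{B}_u=\mathcal{R}_\O$ as $\rho\searrow0$, and matrix inversion is continuous at the invertible limit $H_\O$.

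Item 2 is the delicate part, since both penalties vanish as $\rho\to\infty$. Using $(\rho G+I)^{-1}=\rho^{-1}G^{-1}-\rho^{-2}G^{-1}(G+\rho^{-1}I)^{-1}$, we get $\|\mathcal{R}_\KW-\mathcal{R}_\FR\|=O(\rho^{-2})$. Both Hessians then converge to $\mathcal{B}_u^{\tT}\mathcal{Q}\mathcal{B}_u$. If this matrix is invertible, the difference of inverses tends to $0$ directly. Otherwise, write $H_\KW^{-1}-H_\FR^{-1}=H_\KW^{-1}(\mathcal{R}_\FR-\mathcal{R}_\KW)H_\FR^{-1}$. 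Both $\mathcal{R}_\FR$ and $\mathcal{R}_\KW$ are bounded below by $c\rho^{-1}\mathcal{B}_u^{\tT}\mathcal{B}_u$, so $\|H_\circ^{-1}\|=O(\rho)$, and the product is $O(\rho)\,O(\rho^{-2})\,O(\rho)=O(1)$. That is not yet enough, and this is the main obstacle. I would try to sharpen the estimate on the direction $v=\mathcal{B}_u^{\tT}\mathcal{Q}\mathcal{A}x_0$. Since $v$ lies in the range of $\mathcal{B}_u^{\tT}\mathcal{Q}$, it is orthogonal to $\ker(\mathcal{B}_u^{\tT}\mathcal{Q}\mathcal{B}_u)$. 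Decomposing along this kernel and its complement, $H_\FR^{-1}v$ stays bounded. The error on the kernel component is then $O(\rho)\,O(\rho^{-2})\,O(1)\to0$. If this fails, I would fall back on assuming $\mathcal{B}_u^{\tT}\mathcal{Q}\mathcal{B}_u\succ0$, which holds for instance when $\mathcal{Q}\succ0$.

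Item 3: $\mathcal{R}_\FR\succeq c\rho^{-1}I$ with $c>0$, so $\|H_\FR^{-1}\|\le\rho/c\to0$ and therefore $u_\FR\to0$. The limit $u_\O=-(\mathcal{B}_u^{\tT}\mathcal{B}_u+\mathcal{B}_u^{\tT}\mathcal{Q}\mathcal{B}_u)^{-1}\mathcal{B}_u^{\tT}\mathcal{Q}\mathcal{A}x_0$ is nonzero whenever $\mathcal{B}_u^{\tT}\mathcal{Q}\mathcal{A}x_0\neq0$. This is the generic case; the double integrator with $x_0\neq0$ is an explicit example.
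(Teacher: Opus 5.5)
Your proposal is correct and follows the paper's proof. Like the paper, you expand $V^\circ$ into a strictly convex quadratic in $u$, solve the first-order condition, and read off the limits from $\mathcal{R}_{\KW}\to\mathcal{R}_{\O}$ as $\rho\searrow 0$, from $\|\mathcal{R}_{\KW}-\mathcal{R}_{\FR}\|\to 0$ as $\rho\to\infty$, and from the blow-up of $\mathcal{R}_{\FR}$ as $\rho\searrow 0$. Your added care fills details the paper skips: full column rank of $B$ is indeed needed for $\mathcal{R}_\circ+\mathcal{B}_u^{\tT}\mathcal{Q}\mathcal{B}_u\succ 0$, and the paper's one-line argument for item 2 tacitly assumes $\mathcal{B}_u^{\tT}\mathcal{Q}\mathcal{B}_u$ is invertible. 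Your claim that $H_{\FR}^{-1}v$ stays bounded is true (a Schur complement with respect to $\ker(\mathcal{B}_u^{\tT}\mathcal{Q}\mathcal{B}_u)$ shows it), so your $O(\rho)\,O(\rho^{-2})\,O(1)$ estimate controls the whole error $\|u_{\KW}-u_{\FR}\|$ and the fallback is unnecessary. Only your explicit example needs checking, since it requires $\mathcal{B}_u^{\tT}\mathcal{Q}\mathcal{A}x_0\neq 0$; this fails, e.g.\ for a one-step horizon with $x_0=(1,0)^{\tT}$.
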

\begin{proof}
     Substituting the stacked dynamics 
         \begin{equation}\label{eq: T-step_pred}
    x =
    \mathcal{A} x_0
    +
    \mathcal{B}_u u
    +
    \mathcal{B}_w w,
    \end{equation}
into the objective yields
    \begin{align*}
        V^{\circ}(u) &= \mathbb{E}_{w_t \sim \mathcal{N}(0,\Sigma_w)}\Bigg[\frac{1}{2} (\mathcal{A}x_0+\mathcal{B}_u u  + \mathcal{B}_w w)^\top \mathcal{Q} (\mathcal{A}x_0 +\mathcal{B}_u u + \mathcal{B}_w w)\Bigg]  + \frac{1}{2}u^\top \mathcal{R}_\circ u\\
        &=\frac{1}{2} (\mathcal{A}x_0+\mathcal{B}_u u)^\top \mathcal{Q} (\mathcal{A}x_0 +\mathcal{B}_u u ) + \frac{1}{2}\tr(\mathcal{Q}\mathcal{B}_w (I \otimes \Sigma_w)\mathcal{B}_w^\top) + \frac{1}{2}u^\top \mathcal{R}_\circ u \\
        &= \frac{1}{2} u^\top(\mathcal{R}_{\circ} + \mathcal{B}_u^\top \mathcal{Q} \mathcal{B}_u)u + u^\top \mathcal{B}_u^\top \mathcal{Q}\mathcal{A}x_0 + \frac{1}{2}\tr(\mathcal{Q}\mathcal{B}_w (I \otimes \Sigma_w)\mathcal{B}_w^\top).
    \end{align*}
    Since $\mathcal{R}_{\circ} + \mathcal{B}_u^\top \mathcal{Q} \mathcal{B}_u$ is positive definite, $V^\circ$ is strictly convex and possesses the unique solution $u_\circ$ obtained by solving the first-order necessary conditions of optimality
    \begin{align*}
        (\mathcal{R}_{\circ} + \mathcal{B}_u^\top \mathcal{Q} \mathcal{B}_u)u_\circ+\mathcal{B}_u^\top \mathcal{Q}\mathcal{A}x_0=0
    \end{align*}
    which gives the optimal solution \eqref{eq:optimal_control}.
    Now assume $\Sigma_w=\rho I$ and $\lambda=1$.
    With this choice, we get that,
      \begin{align*}
          \mathcal{R}_{\KW}
          =\mathcal{B}_u^\top  (\rho \mathcal B_w \mathcal{B}_w^\top + I)^{-1} \mathcal{B}_u &\rightarrow \mathcal{B}_u^\top \mathcal{B}_u=\mathcal{R}_{\O}   &&\textnormal{ as } \rho \rightarrow 0\quad \textnormal{ and }\\
          \|\mathcal{R}_{\KW}-\mathcal{R}_{\FR}\|
          =\| \mathcal{B}_u^\top \left( (\rho \mathcal B_w \mathcal{B}_w^\top + I)^{-1} - (\rho \mathcal B_w \mathcal{B}_w^\top)^{-1} \right) \mathcal{B}_u \|  &\rightarrow 0     &&\textnormal{ as } \rho \rightarrow \infty.
      \end{align*}
      This proves the first two items.
      Finally, $\mathcal{R}_{\FR}$ grows unbounded as $\rho$ approaches $0$ which implies that $u_{\FR}$ converges to $0$.
\end{proof}


\subsection{Results on the cart-pole system}
\label{app:cartpole}


\subsubsection{Cart--pole model, linearization and discretization}
\label{app:cartpole_modeling}
\begin{figure}[h!]
    \centering
    \begin{tikzpicture}[line cap=round, line join=round]

\def\cartW{2.2}
\def\cartH{1.0}
\def\poleLen{3.3}
\def\thetanum{45} 

\draw[thick] (-4,-0.4) -- (4,-0.4);

\coordinate (cart center) at (0,0.5*\cartH);
\draw[thick, fill=white] 
  (-0.5*\cartW,0) rectangle (0.5*\cartW,\cartH);

\foreach \x in {-0.8,0.8}
  \draw[thick] (\x,-0.2) circle (0.2);

\coordinate (pivot) at (0, \cartH);
\usetikzlibrary{calc}
\coordinate (pole end) at ($(pivot) + (\thetanum:\poleLen)$);
\draw[thick] (pivot) -- (pole end);
\node at (1,2.5) {$l$};
\fill (pivot) circle (2pt);

\filldraw[thick, fill=white] (pole end) circle (5pt);

\draw[->] ($(pivot)+(0,-0.2)$) arc (-90:70-\thetanum:0.3);
\node at ($(pivot)+(0.5,0.3)$) {$\theta$};

\draw[->] (-2.5,-1.2) -- (0,-1.2) node[right] {$x$} ;
\draw[-,dashed] (0,-2) -- (0,0.2) ;
\draw[-,dashed] (0,0.6) -- (0,3) ;
\draw[-,dashdotted] (-2.5,-2) -- (-2.5,2)  ;
\node at (0,0.4) {$m_c$};
\node at (-3.2,-0.8) {Origin};
\node at ($(pole end)+(0.6,0)$) {$m_p$};

\draw[->, thick] (0.5*\cartW,0.5*\cartH) -- ++(1,0) node[right] {$u$};

\end{tikzpicture}

    \caption{Cart-pole system}
    \label{fig:cart-pole}
\end{figure}
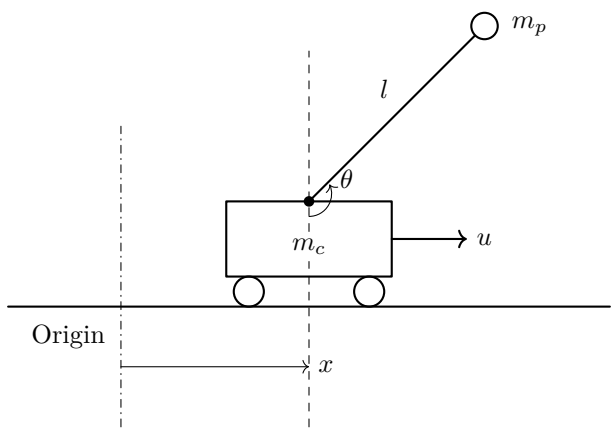
The cart-pole system is depicted in Figure~\ref{fig:cart-pole}.
The equations of motion are given by
\begin{align*}
    (m_c+m_p)\ddot{x}+m_pl\ddot{\theta}\cos(\theta) - m_pl\dot{\theta}^2\sin(\theta)&=u,\\
    m_pl\ddot{x}\cos(\theta) + m_pl^2\ddot{\theta}+m_pgl\sin(\theta)&=0
\end{align*}
where \(x\) is the cart horizontal position, \(\theta\) is the pendulum angle, \(m_c\) is the cart mass, \(m_p\) is the pendulum mass, \(l\) is the pendulum length (distance from the pivot to the pendulum center of mass), \(g\) is the coefficient of gravity, and \(u\) is the horizontal force applied to the cart.
Introducing the generalized coordinate $q=\begin{bmatrix}
    x & \dot{x} & \theta & \dot{\theta}
\end{bmatrix}$, 
the dynamics can be written compactly as 
\begin{equation*}
    \frac{dq}{dt} = f(q,u).
\end{equation*}
Consider a forced equilibrium $(q^*,u^*)$ such that $f(q^*,u^*)=0$.
Using the linear approximation of $f$ around this equilibrium, the non-linear system can be linearized to obtain an approximate linear time-invariant system in deviation variables $\Tilde{q}=q-q^*$ as 
\begin{align*}
    \frac{d\Tilde{q}}{dt}&=\frac{dq}{dt}=f(q,u)\approx\underbrace{f(q^*,u^*)}_{0}+\underbrace{\frac{\partial f}{\partial q}(q^*,u^*)}_{A_c}\underbrace{(q-q^*)}_{\Tilde{q}}+\underbrace{\frac{\partial f}{\partial u}(q^*,u^*)}_{B_c}\underbrace{(u-u^*)}_{\Tilde{u}}=A_c\Tilde{q}+B_c\Tilde{u}.
\end{align*}
For the upright equilibrium $q^*=\begin{bmatrix}
    0 & 0 & \pi & 0
\end{bmatrix}$ and $u^*=0$, we get
\[
A_c = \begin{bmatrix}
0 & 1 & 0 & 0\\
0 & 0 & \dfrac{m_p g}{m_c} & 0\\
0 & 0 & 0 & 1\\
0 & 0 & \dfrac{(m_c+m_p)g}{l m_c} & 0
\end{bmatrix}, \qquad
B_c = \begin{bmatrix} 0 \\ \dfrac{1}{m_c} \\ 0 \\ \dfrac{1}{l m_c} \end{bmatrix}.
\]
Finally, we discretize the linearized continuous-time system with sampling time $T_s$ using a zero-order-hold scheme to obtain the discrete-time system
\begin{equation}
\Tilde{q}_{k+1}=\underbrace{e^{A_c T_s}}_A\Tilde{q}_k + \underbrace{\int_0^{T_s} e^{A_c\tau}B_c\,d\tau}_B\Tilde{u}_k=A\Tilde{q}_k+B\Tilde{u}_.
\label{eq:discrete_exact}
\end{equation}
This discrete-time linear system $(A,B)$ is then used as input to the LQR design.

\subsubsection{Simulation results with the Cart-pole system}
\begin{figure}[!ht]
    \centering
    \input{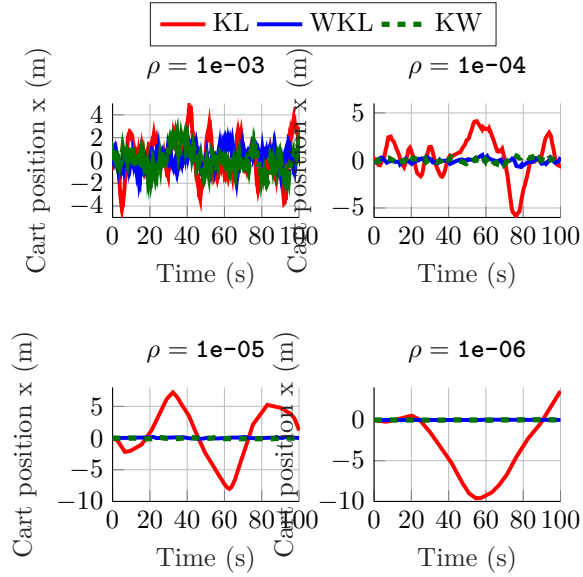}
    \caption{Closed-loop cart positions for the nonlinear cart–pole under KL-, WKL-, and KW-regularized controllers for $\rho \in \{10^{-3},10^{-4},10^{-5},10^{-6}\}$.
    At low noise, KL produces near-zero feedback and large oscillations, whereas WKL and KW reduce oscillations.}
    \label{fig:traj_all_cartpole}
\end{figure}

Figure~\ref{fig:traj_all_cartpole} shows the cart position trajectories for the nonlinear cart-pole system. As with the double integrator, KL-regularization produces large oscillations, while WKL and KW yield increasingly stable trajectories as $\rho$ decreases, resulting in superior closed-loop performance.

\subsection{Double integrator with anisotropic noise}\label{app:anisotropic_noise}

We now consider a planar double-integrator system obtained by stacking two independent copies of the one-dimensional model in Section~\ref{sec:double_int}. The state is ordered as
$
x_t = [q_{1,t},\, p_{1,t},\, q_{2,t},\, p_{2,t}]^\top
$
and the control as
$
u_t = [u_{1,t},\, u_{2,t}]^\top,
$
so that the two spatial dimensions are decoupled. We set
\[
A = I_2 \otimes \begin{bmatrix} 1 & 1 \\ 0 & 1 \end{bmatrix}, \qquad B = I_2 \otimes \begin{bmatrix} 0 \\ 1 \end{bmatrix}.
\]
As in the one-dimensional case, we use $Q = I_4$ and $\gamma = 0.9$.

To isolate the effect of anisotropic process noise, we consider
\[
w_t \sim \mathcal{N}(0,\Sigma_w^{(0)}),
\qquad
\Sigma_w^{(0)} = \mathrm{diag}(10^{-5},10^{-5},10^1,10^1),
\]
so that the first spatial dimension has very small noise, while the second has very large noise. 
We compute the optimal discounted LQR gains for the KL-, WKL-, and KWKL-regularized controllers, using $\lambda = 1$ in the KWKL case.

The resulting optimal feedback gains are
\begin{align*}
F_{\FR} &=
\begin{bmatrix}
0 & 0 & 0 & 0 \\
0 & 0 & -0.5879 & -1.5875
\end{bmatrix},\\[1ex]
F_{\O} &=
\begin{bmatrix}
-0.3882 & -1.1817 & 0 & 0 \\
0 & 0 & -0.3882 & -1.1817
\end{bmatrix},\\[1ex]
F_{\KW} &=
\begin{bmatrix}
-0.3882 & -1.1817 & 0 & 0 \\
0 & 0 & -0.5879 & -1.5875
\end{bmatrix}.
\end{align*}
This example highlights the directional behavior of the proposed regularizers. Since the two coordinates are decoupled, the optimal feedback gains remain block diagonal and the off-diagonal blocks are numerically zero. The three regularizers differ only in how they treat the two noise levels: KL suppresses control in the low-noise dimension while retaining nontrivial feedback in the high-noise dimension, WKL ignores the anisotropy and produces identical gains across both dimensions, and KWKL adapts direction-wise, matching WKL in the low-noise dimension and KL in the high-noise dimension. Thus, KWKL preserves the useful noise-dependent geometry where it is informative, while avoiding the low-noise degeneracy exhibited by KL.

\subsection{Optimal cost for the double integrator system from Section~\ref{sec:double_int}}\label{app:optimal_costs}
Figure~\ref{fig:opt_costs_vary_lambda} shows the optimal costs with different regularizers, supporting the observations of Section~\ref{sec:double_int}
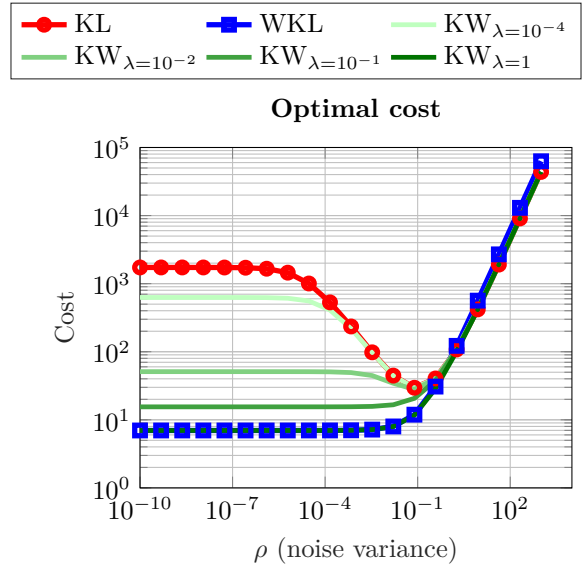
\begin{figure}[t]
    \centering
    \input{figures_oc/optimal_cost_vary_lambda}
    \caption{Optimal costs $J^{\circ}$ as a function of noise $\rho$ for $\lambda \in \{10^{-4},10^{-2},10^{-1},1\}$}
    \label{fig:opt_costs_vary_lambda}
\end{figure}
\subsection{Effects of varying $\lambda$ for the double integrator system from Section~\ref{sec:double_int}}
Figure~\ref{fig:traj_all_vary_lambda} and Figure~\ref{fig:opt_policies_vary_lambda} supplement the Figure~\ref{fig:traj_all} and Figure~\ref{fig:opt_policies}, respectively, and plot additional trajectories with different values of $\lambda$.
\begin{figure}[]
    \centering
    \input{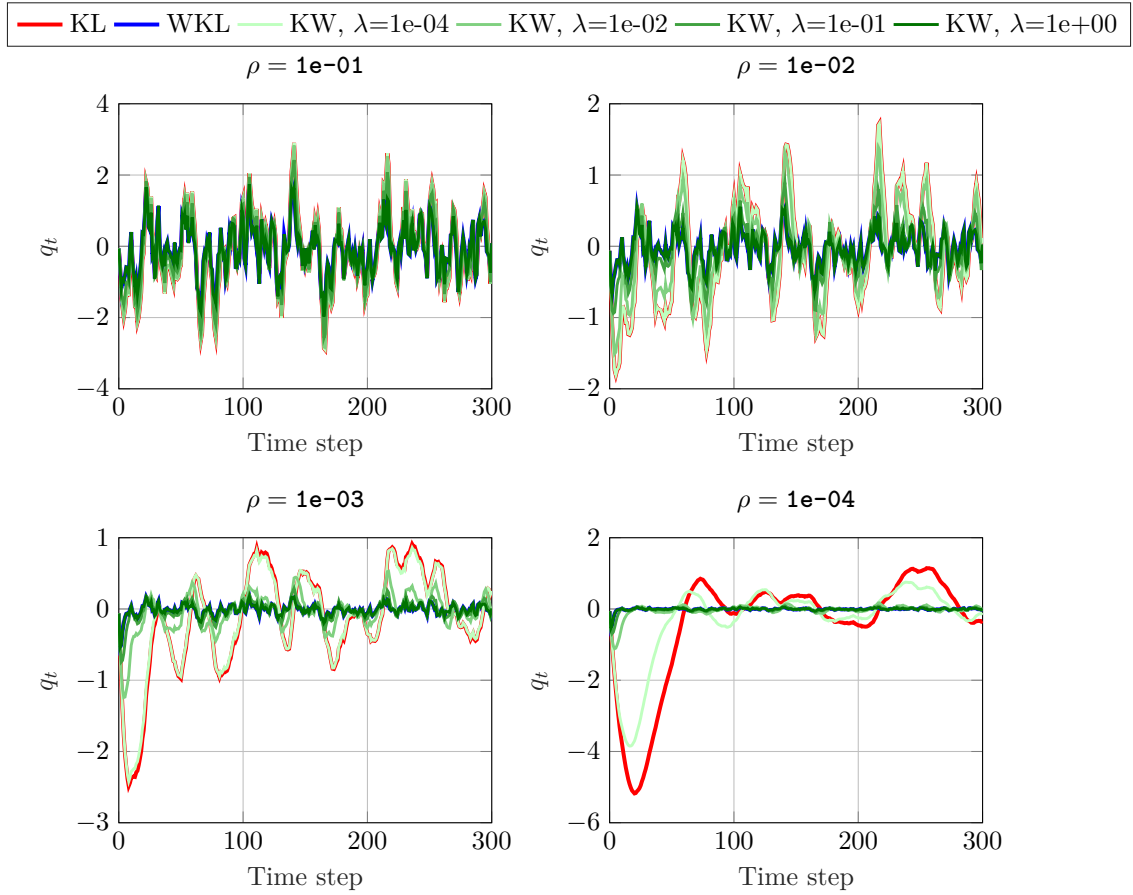}
    \caption{Closed-loop trajectories of the double integrator under KL-, WKL-, and KW-regularized controllers for noise levels $\rho \in \{10^{-1},10^{-2},10^{-3},10^{-4}\}$.
    At low noise, KL produces near-zero feedback and large oscillations, whereas WKL and KW yield more stable trajectories. 
    For KW, varying the parameter $\lambda$ interpolates smoothly between KL-like and WKL-like behavior.}
    \label{fig:traj_all_vary_lambda}
\end{figure}

\begin{figure}[ht]
    \centering
    \input{figures_oc/optimal_policies_vs_rho_new_vary_lambda}
    \caption{Comparison of optimal policies for $\{ \FR, \O, \KW \}$-regularized LQR.
    Each linear policy is represented by a $1 \times 2$ gain matrix $F$; both entries are shown. 
    KL gains shrink to zero as noise disappears, WKL gains remain constant since they do not depend on $\rho$, and KW gains smoothly interpolate between the two.}
    \label{fig:opt_policies_vary_lambda}
\end{figure}
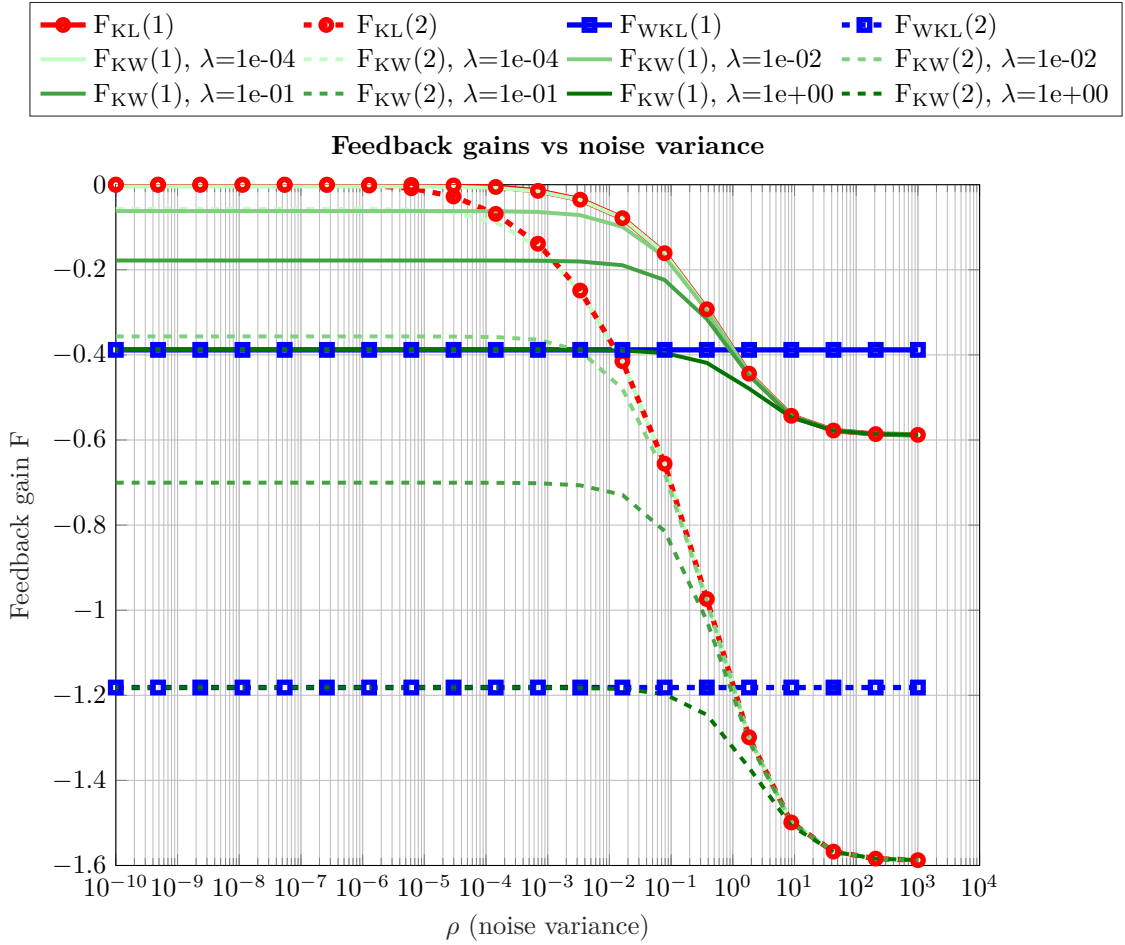

\clearpage
\subsection{Comparison of divergence balls}
Figure~\ref{fig:divergence_balls} compares sublevel sets of KL, WKL, and KWKL
around univariate Gaussian reference distributions. Figure~\ref{fig:divergence_balls2}
shows how the KWKL level sets vary with the regularization parameter $\lambda$.
Note that while the KL divergence balls are always convex, the WKL divergence balls and the KWKL divergence balls may become non-convex.
\begin{figure}[t!]
	\centering
     \includegraphics[scale=0.5]{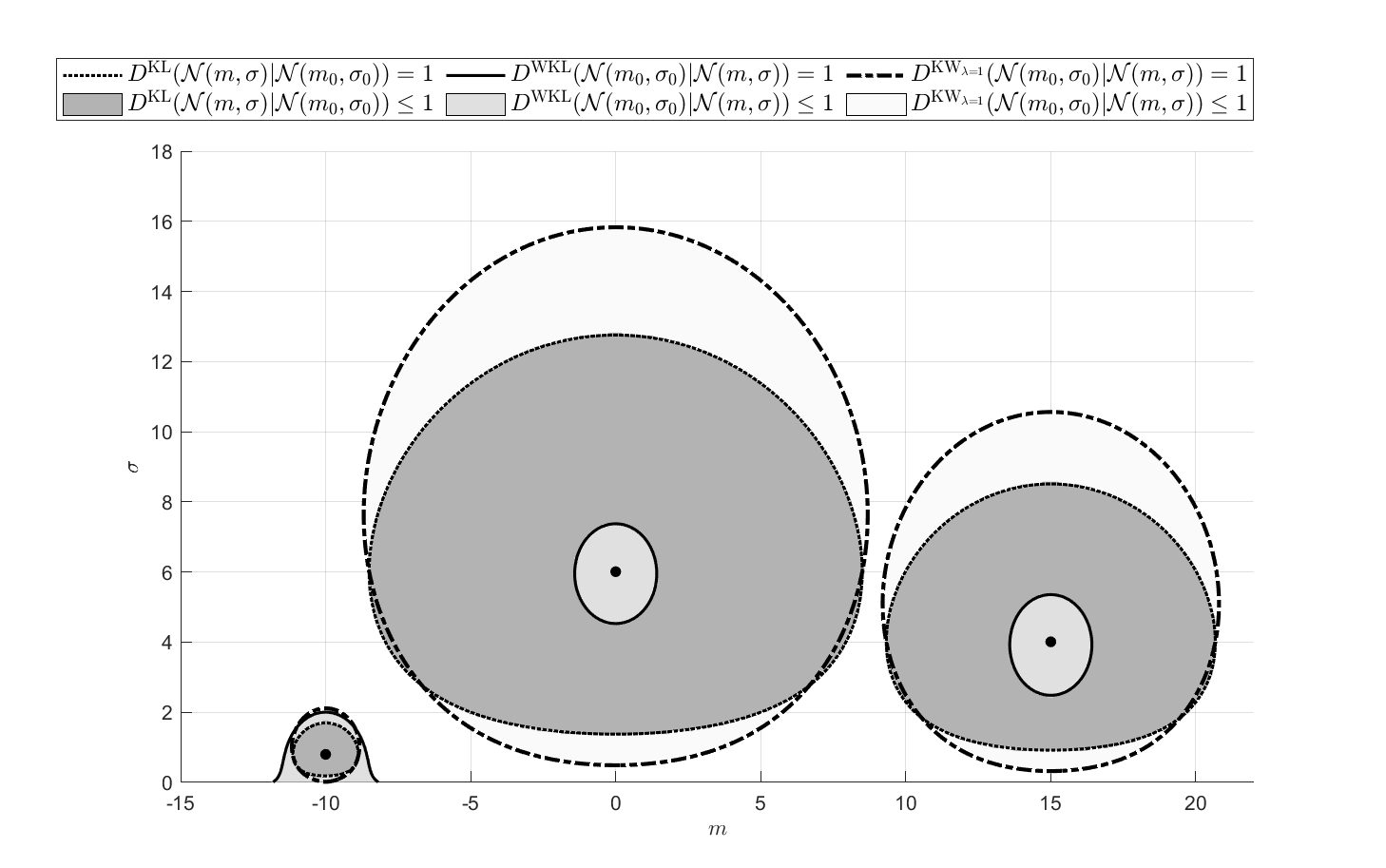}
	\caption{Comparison of sub-level sets for KL-divergence, WKL-divergence, and KWKL-divergence centered at various univariate Gaussian reference distributions $\mathcal{N}(m_0,\sigma_0)$, indicated by solid markers. Each shaded region represents the set of distributions $\mathcal{N}(m,\sigma)$ whose divergence from the reference is less than or equal to 1.}
	\label{fig:divergence_balls}
\end{figure}

\begin{figure}[t!]
	\centering
     \includegraphics[scale=0.5]{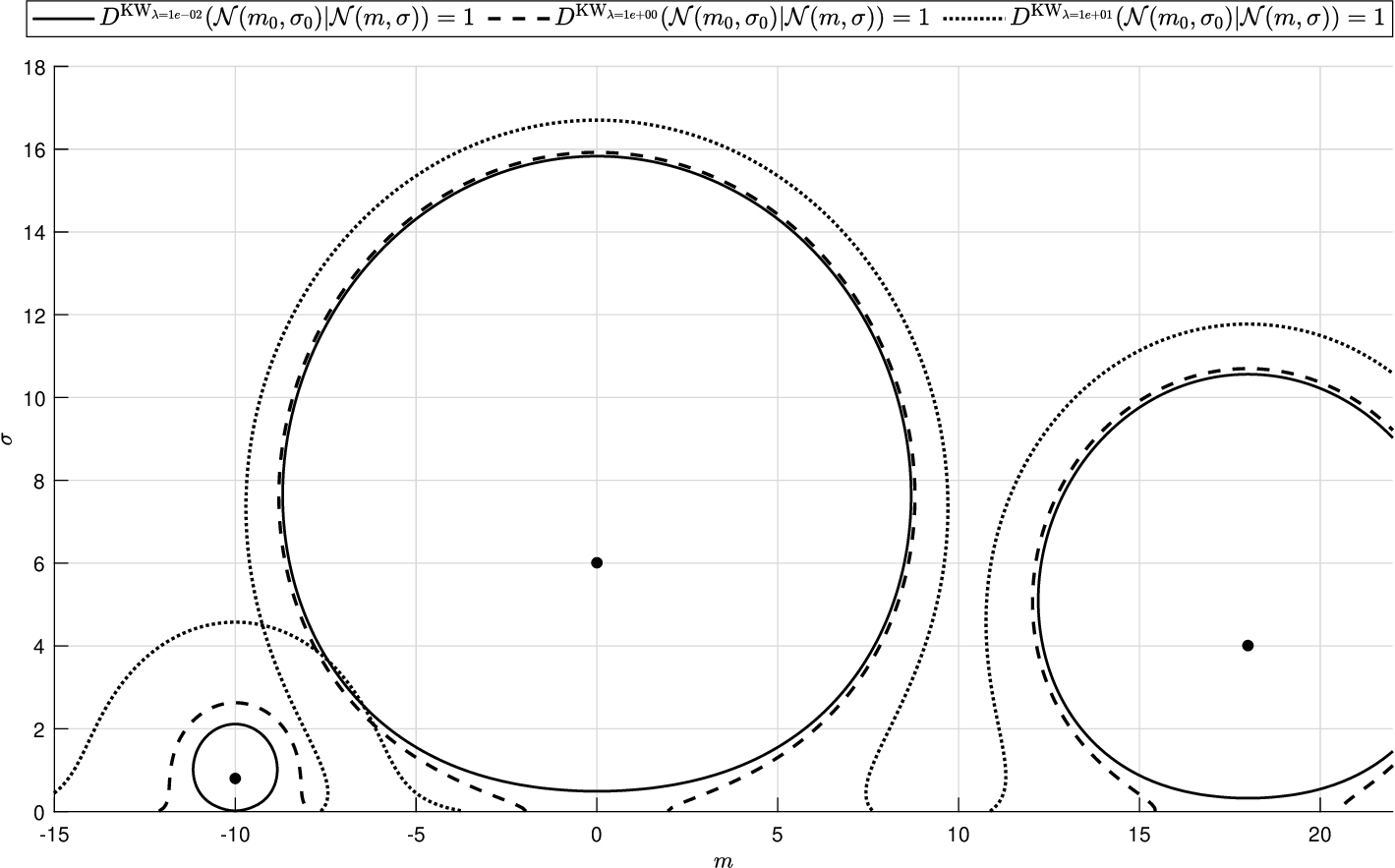}
	\caption{Level sets for KWKL-divergence centered at various univariate Gaussian reference distributions $\mathcal{N}(m_0,\sigma_0)$, indicated by solid markers for different values of $\lambda$.}
	\label{fig:divergence_balls2}
\end{figure}

%% file: figures_oc/optimal_cost_vary_lambda.tex
%
%
\definecolor{mycolor1}{rgb}{0.50000,0.81667,0.50000}%
\definecolor{mycolor2}{rgb}{0.25000,0.63333,0.25000}%
\begin{tikzpicture}

\begin{axis}[%
width=0.5*4.521in,
height=0.5*3.566in,
at={(0.758in,0.481in)},
scale only axis,
xmode=log,
xmin=1e-10,
xmax=10000,
xminorticks=true,
xlabel style={font=\color{white!15!black}},
xlabel={$\rho\text{ (noise variance)}$},
ymode=log,
ymin=1,
ymax=100000,
yminorticks=true,
ylabel style={font=\color{white!15!black}},
ylabel={Cost},
axis background/.style={fill=white},
title style={font=\bfseries},
title={Optimal cost},
xmajorgrids,
xminorgrids,
ymajorgrids,
yminorgrids,
legend style={at={(-0.3,1.2)}, anchor=south west, legend columns=3, legend cell align=left, align=left, draw=white!15!black}
]
\addplot [color=red, line width=2.0pt, mark=o, mark options={solid, red}]
  table[row sep=crcr]{%
1e-10	1729.99384437971\\
4.83293023857175e-10	1729.97025101473\\
2.33572146909012e-09	1729.85624153536\\
1.12883789168469e-08	1729.30560592445\\
5.45559478116851e-08	1726.6528960896\\
2.63665089873036e-07	1714.02667910194\\
1.27427498570313e-06	1657.15376998159\\
6.15848211066027e-06	1449.99548324467\\
2.97635144163132e-05	1004.88590250012\\
0.000143844988828766	530.627808568614\\
0.000695192796177561	234.980593220712\\
0.00335981828628378	98.0244121523251\\
0.0162377673918872	44.6015293486594\\
0.0784759970351462	29.5718168816795\\
0.379269019073225	40.485294235519\\
1.83298071083244	107.868918744345\\
8.85866790410083	418.637156100847\\
42.813323987194	1904.87518797824\\
206.913808111479	9081.66689440559\\
1000	43765.0892060234\\
};
\addlegendentry{$\KL$}

\addplot [color=blue, line width=2.0pt, mark=square, mark options={solid, blue}]
  table[row sep=crcr]{%
1e-10	6.97198896459554\\
4.83293023857175e-10	6.97198898864638\\
2.33572146909012e-09	6.97198910488237\\
1.12883789168469e-08	6.97198966664283\\
5.45559478116851e-08	6.97199238159195\\
2.63665089873036e-07	6.97200550275161\\
1.27427498570313e-06	6.97206891640093\\
6.15848211066027e-06	6.97237539014423\\
2.97635144163132e-05	6.9738565563656\\
0.000143844988828766	6.98101492938517\\
0.000695192796177561	7.01561084681044\\
0.00335981828628378	7.18281050226617\\
0.0162377673918872	7.99087477299692\\
0.0784759970351462	11.896193021721\\
0.379269019073225	30.7703236772258\\
1.83298071083244	121.987680448969\\
8.85866790410083	562.834802273714\\
42.813323987194	2693.41818792785\\
206.913808111479	12990.3790582543\\
1000	62754.8726138451\\
};
\addlegendentry{$\WKL$}

\addplot [color=white!75!green, line width=1.8pt]
  table[row sep=crcr]{%
1e-10	626.721153573482\\
4.83293023857175e-10	626.720082110074\\
2.33572146909012e-09	626.714903875242\\
1.12883789168469e-08	626.689879533511\\
5.45559478116851e-08	626.568978467208\\
2.63665089873036e-07	625.985600693314\\
1.27427498570313e-06	623.187676004083\\
6.15848211066027e-06	610.147031204325\\
2.97635144163132e-05	556.459611211735\\
0.000143844988828766	409.114667736519\\
0.000695192796177561	218.049681155925\\
0.00335981828628378	96.379737051604\\
0.0162377673918872	44.4513297128521\\
0.0784759970351462	29.5542083092285\\
0.379269019073225	40.4817491990966\\
1.83298071083244	107.867898254037\\
8.85866790410083	418.636878577983\\
42.813323987194	1904.87512438373\\
206.913808111479	9081.66688091107\\
1000	43765.089203216\\
};
\addlegendentry{$\KW_{\lambda=10^{-4}}$}

\addplot [color=mycolor1, line width=1.8pt]
  table[row sep=crcr]{%
1e-10	50.9749230977639\\
4.83293023857175e-10	50.9749221727294\\
2.33572146909012e-09	50.9749177021022\\
1.12883789168469e-08	50.9748960958925\\
5.45559478116851e-08	50.9747916750516\\
2.63665089873036e-07	50.9742870272124\\
1.27427498570313e-06	50.9718483516629\\
6.15848211066027e-06	50.9600682916449\\
2.97635144163132e-05	50.9032732757282\\
0.000143844988828766	50.6319519841046\\
0.000695192796177561	49.3904184009944\\
0.00335981828628378	44.6558468693419\\
0.0162377673918872	34.4114054790075\\
0.0784759970351462	27.9726999518491\\
0.379269019073225	40.1381639930901\\
1.83298071083244	107.767340813813\\
8.85866790410083	418.609432869893\\
42.813323987194	1904.86882997324\\
206.913808111479	9081.66554501836\\
1000	43765.0889252844\\
};
\addlegendentry{$\KW_{\lambda=10^{-2}}$}

\addplot [color=mycolor2, line width=1.8pt]
  table[row sep=crcr]{%
1e-10	15.506197825555\\
4.83293023857175e-10	15.5061978522933\\
2.33572146909012e-09	15.5061979815177\\
1.12883789168469e-08	15.5061986060501\\
5.45559478116851e-08	15.5062016243715\\
2.63665089873036e-07	15.5062162117048\\
1.27427498570313e-06	15.5062867111911\\
6.15848211066027e-06	15.5066274284597\\
2.97635144163132e-05	15.5082740485156\\
0.000143844988828766	15.5162310505476\\
0.000695192796177561	15.5546634193833\\
0.00335981828628378	15.7398654098236\\
0.0162377673918872	16.6228531917016\\
0.0784759970351462	20.6594839730621\\
0.379269019073225	37.5599445844872\\
1.83298071083244	106.893991440196\\
8.85866790410083	418.362537409963\\
42.813323987194	1904.81173905493\\
206.913808111479	9081.65340637833\\
1000	43765.0863988856\\
};
\addlegendentry{$\KW_{\lambda=10^{-1}}$}

\addplot [color=black!55!green, line width=1.8pt]
  table[row sep=crcr]{%
1e-10	6.97198896443285\\
4.83293023857175e-10	6.97198898786011\\
2.33572146909012e-09	6.97198910108238\\
1.12883789168469e-08	6.97198964827776\\
5.45559478116851e-08	6.97199229283479\\
2.63665089873036e-07	6.97200507379372\\
1.27427498570313e-06	6.97206684326026\\
6.15848211066027e-06	6.9723653704005\\
2.97635144163132e-05	6.97380812231117\\
0.000143844988828766	6.98078063303553\\
0.000695192796177561	7.01447342115921\\
0.00335981828628378	7.17719488471471\\
0.0162377673918872	7.96100186857842\\
0.0784759970351462	11.6917102512923\\
0.379269019073225	28.6957960061117\\
1.83298071083244	100.893404726675\\
8.85866790410083	416.127452938664\\
42.813323987194	1904.2535198505\\
206.913808111479	9081.53259585901\\
1000	43765.0611598644\\
};
\addlegendentry{$\KW_{\lambda=1}$}

\end{axis}
\end{tikzpicture}%

%% file: figures_oc/optimal_policies_vs_rho_new_vary_lambda.tex
%
%
\definecolor{mycolor1}{rgb}{0.50000,0.81667,0.50000}%
\definecolor{mycolor2}{rgb}{0.25000,0.63333,0.25000}%
\begin{tikzpicture}

\begin{axis}[%
width=4.521in,
height=3.566in,
at={(0.758in,0.481in)},
scale only axis,
xmode=log,
xmin=1e-10,
xmax=10000,
xminorticks=true,
xlabel style={font=\color{white!15!black}},
xlabel={$\rho\text{ (noise variance)}$},
ymin=-1.6,
ymax=0,
ylabel style={font=\color{white!15!black}},
ylabel={Feedback gain F},
axis background/.style={fill=white},
title style={font=\bfseries},
title={Feedback gains vs noise variance},
xmajorgrids,
xminorgrids,
ymajorgrids,
legend style={at={(-0.1,1.1)}, anchor=south west, legend columns=4, legend cell align=left, align=left, draw=white!15!black}
]
\addplot [color=red, line width=2.0pt, mark=o, mark options={solid, red}]
  table[row sep=crcr]{%
1e-10	-8.0999815563995e-09\\
4.83293023857175e-10	-3.91463041501871e-08\\
2.33572146909012e-09	-1.89183378126937e-07\\
1.12883789168469e-08	-9.14123811628664e-07\\
5.45559478116851e-08	-4.41355835263295e-06\\
2.63665089873036e-07	-2.12304412989078e-05\\
1.27427498570313e-06	-0.000100410304675811\\
6.15848211066027e-06	-0.000445301071187887\\
2.97635144163132e-05	-0.00169394217705902\\
0.000143844988828766	-0.00532614416603124\\
0.000695192796177561	-0.0144282026193295\\
0.00335981828628378	-0.0351602498693721\\
0.0162377673918872	-0.0786383367813075\\
0.0784759970351462	-0.16100192838307\\
0.379269019073225	-0.29270370339896\\
1.83298071083244	-0.444145209610048\\
8.85866790410083	-0.543111956094318\\
42.813323987194	-0.577747466656653\\
206.913808111479	-0.586131461287192\\
1000	-0.587930259487862\\
};
\addlegendentry{$\text{F}_{\text{KL}}\text{(1)}$}

\addplot [color=red, dashed, line width=2.0pt, mark=o, mark options={solid, red}]
  table[row sep=crcr]{%
1e-10	-1.62899404038067e-07\\
4.83293023857175e-10	-7.87270416180562e-07\\
2.33572146909012e-09	-3.80456518515801e-06\\
1.12883789168469e-08	-1.83811802822263e-05\\
5.45559478116851e-08	-8.86948513497674e-05\\
2.63665089873036e-07	-0.000425433201153238\\
1.27427498570313e-06	-0.0019859516003225\\
6.15848211066027e-06	-0.00836480313284458\\
2.97635144163132e-05	-0.0276706333017854\\
0.000143844988828766	-0.0686321209239194\\
0.000695192796177561	-0.138763661576226\\
0.00335981828628378	-0.248761693205869\\
0.0162377673918872	-0.41480269291227\\
0.0784759970351462	-0.655854502247144\\
0.379269019073225	-0.973777343173139\\
1.83298071083244	-1.29857242861877\\
8.85866790410083	-1.49884742408568\\
42.813323987194	-1.56743420653651\\
206.913808111479	-1.58393759439203\\
1000	-1.58747363645657\\
};
\addlegendentry{$\text{F}_{\text{KL}}\text{(2)}$}

\addplot [color=blue, line width=2.0pt, mark=square, mark options={solid, blue}]
  table[row sep=crcr]{%
1e-10	-0.388181584816695\\
4.83293023857175e-10	-0.388181584816695\\
2.33572146909012e-09	-0.388181584816695\\
1.12883789168469e-08	-0.388181584816695\\
5.45559478116851e-08	-0.388181584816695\\
2.63665089873036e-07	-0.388181584816695\\
1.27427498570313e-06	-0.388181584816695\\
6.15848211066027e-06	-0.388181584816695\\
2.97635144163132e-05	-0.388181584816695\\
0.000143844988828766	-0.388181584816695\\
0.000695192796177561	-0.388181584816695\\
0.00335981828628378	-0.388181584816695\\
0.0162377673918872	-0.388181584816695\\
0.0784759970351462	-0.388181584816695\\
0.379269019073225	-0.388181584816695\\
1.83298071083244	-0.388181584816695\\
8.85866790410083	-0.388181584816695\\
42.813323987194	-0.388181584816695\\
206.913808111479	-0.388181584816695\\
1000	-0.388181584816695\\
};
\addlegendentry{$\text{F}_{\text{WKL}}\text{(1)}$}

\addplot [color=blue, dashed, line width=2.0pt, mark=square, mark options={solid, blue}]
  table[row sep=crcr]{%
1e-10	-1.18173454473589\\
4.83293023857175e-10	-1.18173454473589\\
2.33572146909012e-09	-1.18173454473589\\
1.12883789168469e-08	-1.18173454473589\\
5.45559478116851e-08	-1.18173454473589\\
2.63665089873036e-07	-1.18173454473589\\
1.27427498570313e-06	-1.18173454473589\\
6.15848211066027e-06	-1.18173454473589\\
2.97635144163132e-05	-1.18173454473589\\
0.000143844988828766	-1.18173454473589\\
0.000695192796177561	-1.18173454473589\\
0.00335981828628378	-1.18173454473589\\
0.0162377673918872	-1.18173454473589\\
0.0784759970351462	-1.18173454473589\\
0.379269019073225	-1.18173454473589\\
1.83298071083244	-1.18173454473589\\
8.85866790410083	-1.18173454473589\\
42.813323987194	-1.18173454473589\\
206.913808111479	-1.18173454473589\\
1000	-1.18173454473589\\
};
\addlegendentry{$\text{F}_{\text{WKL}}\text{(2)}$}

\addplot [color=white!75!green, line width=1.5pt]
  table[row sep=crcr]{%
1e-10	-0.00415146835612312\\
4.83293023857175e-10	-0.00415147944412556\\
2.33572146909012e-09	-0.00415153303143064\\
1.12883789168469e-08	-0.00415179200959172\\
5.45559478116851e-08	-0.0041530435034685\\
2.63665089873036e-07	-0.00415908886443144\\
1.27427498570313e-06	-0.0041882354681759\\
6.15848211066027e-06	-0.00432749995090915\\
2.97635144163132e-05	-0.00496718200088611\\
0.000143844988828766	-0.00754204147883449\\
0.000695192796177561	-0.0156247918145227\\
0.00335981828628378	-0.0357181147133359\\
0.0162377673918872	-0.0788725121649359\\
0.0784759970351462	-0.161088499083387\\
0.379269019073225	-0.292729140854631\\
1.83298071083244	-0.444149916953829\\
8.85866790410083	-0.543112391885515\\
42.813323987194	-0.577747490626762\\
206.913808111479	-0.586131462376401\\
1000	-0.587930259535092\\
};
\addlegendentry{$\text{F}_{\text{KW}}\text{(1), }\lambda\text{=1e-04}$}

\addplot [color=white!75!green, dashed, line width=1.5pt]
  table[row sep=crcr]{%
1e-10	-0.0568845900645721\\
4.83293023857175e-10	-0.0568847061495963\\
2.33572146909012e-09	-0.0568852671764105\\
1.12883789168469e-08	-0.0568879784861655\\
5.45559478116851e-08	-0.056901079869094\\
2.63665089873036e-07	-0.0569643469020808\\
1.27427498570313e-06	-0.0572689276145376\\
6.15848211066027e-06	-0.0587141101549904\\
2.97635144163132e-05	-0.0651514233242752\\
0.000143844988828766	-0.0884504326648558\\
0.000695192796177561	-0.146402065714318\\
0.00335981828628378	-0.251282082937285\\
0.0162377673918872	-0.415585131827818\\
0.0784759970351462	-0.656083122877822\\
0.379269019073225	-0.973834601977196\\
1.83298071083244	-1.29858213276899\\
8.85866790410083	-1.49884829140314\\
42.813323987194	-1.56743425377328\\
206.913808111479	-1.58393759653368\\
1000	-1.5874736365494\\
};
\addlegendentry{$\text{F}_{\text{KW}}\text{(2), }\lambda\text{=1e-04}$}

\addplot [color=mycolor1, line width=1.5pt]
  table[row sep=crcr]{%
1e-10	-0.0619307724704808\\
4.83293023857175e-10	-0.0619307736594976\\
2.33572146909012e-09	-0.0619307794059321\\
1.12883789168469e-08	-0.0619308071780394\\
5.45559478116851e-08	-0.0619309413984996\\
2.63665089873036e-07	-0.0619315900719879\\
1.27427498570313e-06	-0.0619347249575151\\
6.15848211066027e-06	-0.0619498731143717\\
2.97635144163132e-05	-0.0620230241810845\\
0.000143844988828766	-0.0623751916837869\\
0.000695192796177561	-0.0640463225240832\\
0.00335981828628378	-0.0715001414213891\\
0.0162377673918872	-0.0988139150217818\\
0.0784759970351462	-0.169306371401277\\
0.379269019073225	-0.295218263625979\\
1.83298071083244	-0.444614320975676\\
8.85866790410083	-0.543155493462579\\
42.813323987194	-0.577749863133638\\
206.913808111479	-0.586131570202922\\
1000	-0.587930264210809\\
};
\addlegendentry{$\text{F}_{\text{KW}}\text{(1), }\lambda\text{=1e-02}$}

\addplot [color=mycolor1, dashed, line width=1.5pt]
  table[row sep=crcr]{%
1e-10	-0.356568897307592\\
4.83293023857175e-10	-0.356568901643211\\
2.33572146909012e-09	-0.356568922596946\\
1.12883789168469e-08	-0.356569023864847\\
5.45559478116851e-08	-0.35656951328459\\
2.63665089873036e-07	-0.356571878593628\\
1.27427498570313e-06	-0.356583309443263\\
6.15848211066027e-06	-0.356638541708722\\
2.97635144163132e-05	-0.356905190142099\\
0.000143844988828766	-0.358187274998336\\
0.000695192796177561	-0.364234999598052\\
0.00335981828628378	-0.390534661208938\\
0.0162377673918872	-0.479519183666675\\
0.0784759970351462	-0.677623053507923\\
0.379269019073225	-0.979431902446603\\
1.83298071083244	-1.29953940352124\\
8.85866790410083	-1.49893407217612\\
42.813323987194	-1.56743892916074\\
206.913808111479	-1.58393780854713\\
1000	-1.58747364573865\\
};
\addlegendentry{$\text{F}_{\text{KW}}\text{(2), }\lambda\text{=1e-02}$}

\addplot [color=mycolor2, line width=1.5pt]
  table[row sep=crcr]{%
1e-10	-0.178098810151988\\
4.83293023857175e-10	-0.178098810432206\\
2.33572146909012e-09	-0.178098811786484\\
1.12883789168469e-08	-0.178098818331613\\
5.45559478116851e-08	-0.178098849963755\\
2.63665089873036e-07	-0.178099002839553\\
1.27427498570313e-06	-0.178099741674423\\
6.15848211066027e-06	-0.178103312337021\\
2.97635144163132e-05	-0.178120567353894\\
0.000143844988828766	-0.178203918886643\\
0.000695192796177561	-0.178605802412114\\
0.00335981828628378	-0.180526298864542\\
0.0162377673918872	-0.189338023678084\\
0.0784759970351462	-0.223926585154018\\
0.379269019073225	-0.315531439950558\\
1.83298071083244	-0.448694059204056\\
8.85866790410083	-0.543543570814892\\
42.813323987194	-0.577771383000069\\
206.913808111479	-0.58613254997475\\
1000	-0.587930306713077\\
};
\addlegendentry{$\text{F}_{\text{KW}}\text{(1), }\lambda\text{=1e-01}$}

\addplot [color=mycolor2, dashed, line width=1.5pt]
  table[row sep=crcr]{%
1e-10	-0.700332372297859\\
4.83293023857175e-10	-0.700332373016361\\
2.33572146909012e-09	-0.700332376488835\\
1.12883789168469e-08	-0.700332393271058\\
5.45559478116851e-08	-0.700332474378347\\
2.63665089873036e-07	-0.700332866363808\\
1.27427498570313e-06	-0.700334760792627\\
6.15848211066027e-06	-0.700343916211278\\
2.97635144163132e-05	-0.700388158487746\\
0.000143844988828766	-0.700601856425442\\
0.000695192796177561	-0.701631807829231\\
0.00335981828628378	-0.706544509866643\\
0.0162377673918872	-0.728897217687042\\
0.0784759970351462	-0.814007371895716\\
0.379269019073225	-1.0247115725295\\
1.83298071083244	-1.30794094704818\\
8.85866790410083	-1.49970637381331\\
42.813323987194	-1.56748133721002\\
206.913808111479	-1.58393973501912\\
1000	-1.58747372926903\\
};
\addlegendentry{$\text{F}_{\text{KW}}\text{(2), }\lambda\text{=1e-01}$}

\addplot [color=black!55!green, line width=1.5pt]
  table[row sep=crcr]{%
1e-10	-0.388181584826399\\
4.83293023857175e-10	-0.388181584863593\\
2.33572146909012e-09	-0.388181585043348\\
1.12883789168469e-08	-0.388181585912094\\
5.45559478116851e-08	-0.388181590110684\\
2.63665089873036e-07	-0.388181610402174\\
1.27427498570313e-06	-0.388181708469459\\
6.15848211066027e-06	-0.388182182420259\\
2.97635144163132e-05	-0.38818447295521\\
0.000143844988828766	-0.38819554210537\\
0.000695192796177561	-0.38824901879661\\
0.00335981828628378	-0.388507007815358\\
0.0162377673918872	-0.389743210761539\\
0.0784759970351462	-0.395480603344219\\
0.379269019073225	-0.418700470275489\\
1.83298071083244	-0.479179975653283\\
8.85866790410083	-0.547085251727049\\
42.813323987194	-0.577981892995957\\
206.913808111479	-0.586142301410748\\
1000	-0.587930731316119\\
};
\addlegendentry{$\text{F}_{\text{KW}}\text{(1), }\lambda\text{=1e+00}$}

\addplot [color=black!55!green, dashed, line width=1.5pt]
  table[row sep=crcr]{%
1e-10	-1.18173454475642\\
4.83293023857175e-10	-1.18173454483513\\
2.33572146909012e-09	-1.18173454521553\\
1.12883789168469e-08	-1.18173454705397\\
5.45559478116851e-08	-1.18173455593905\\
2.63665089873036e-07	-1.18173459887997\\
1.27427498570313e-06	-1.18173480641029\\
6.15848211066027e-06	-1.18173580938644\\
2.97635144163132e-05	-1.18174065662009\\
0.000143844988828766	-1.18176408109146\\
0.000695192796177561	-1.18187724625174\\
0.00335981828628378	-1.18242314764071\\
0.0162377673918872	-1.18503792114181\\
0.0784759970351462	-1.19715169223015\\
0.379269019073225	-1.24582776498641\\
1.83298071083244	-1.37028884569247\\
8.85866790410083	-1.50675042853273\\
42.813323987194	-1.56789616498359\\
206.913808111479	-1.58395890870954\\
1000	-1.58747456374797\\
};
\addlegendentry{$\text{F}_{\text{KW}}\text{(2), }\lambda\text{=1e+00}$}

\end{axis}
\end{tikzpicture}%

%% file: arXiv_v2.bbl
\begin{thebibliography}{65}
\providecommand{\natexlab}[1]{#1}
\providecommand{\url}[1]{\texttt{#1}}
\expandafter\ifx\csname urlstyle\endcsname\relax
  \providecommand{\doi}[1]{doi: #1}\else
  \providecommand{\doi}{doi: \begingroup \urlstyle{rm}\Url}\fi

\bibitem[Abuqrais \& Pigoli(2026)Abuqrais and Pigoli]{AP2024}
Abuqrais, M. and Pigoli, D.
\newblock A {R}iemannian covariance for manifold-valued data.
\newblock \emph{Journal of Mathematical Imaging and Vision}, 68\penalty0 (2):\penalty0 7, 2026.
\newblock \doi{10.1007/s10851-026-01286-w}.

\bibitem[Ambrosio et~al.(2008)Ambrosio, Gigli, and Savar{\'e}]{AGS2008}
Ambrosio, L., Gigli, N., and Savar{\'e}, G.
\newblock \emph{Gradient flows: in metric spaces and in the space of probability measures}.
\newblock Springer Science \& Business Media, 2 edition, 2008.
\newblock \doi{10.1007/978-3-7643-8722-8}.

\bibitem[Anderson \& Moore(2007)Anderson and Moore]{anderson2007optimal}
Anderson, B.~D. and Moore, J.~B.
\newblock \emph{Optimal control: linear quadratic methods}.
\newblock Courier Corporation, 2007.

\bibitem[Ay(2024)]{A2024}
Ay, N.
\newblock Information geometry of the {O}tto metric.
\newblock \emph{Information Geometry}, pp.\  1--24, 2024.
\newblock \doi{10.1007/s41884-024-00149-w}.

\bibitem[Ay \& Amari(2015)Ay and Amari]{AA2015}
Ay, N. and Amari, S.-i.
\newblock A novel approach to canonical divergences within information geometry.
\newblock \emph{Entropy}, 17\penalty0 (12):\penalty0 8111--8129, 2015.
\newblock \doi{10.3390/e17127866}.

\bibitem[Ay \& Schwachh{\"o}fer(2026)Ay and Schwachh{\"o}fer]{AS2025}
Ay, N. and Schwachh{\"o}fer, L.~J.
\newblock Torsion of $\alpha$-connections on the density manifold.
\newblock In Nielsen, F. and Barbaresco, F. (eds.), \emph{Geometric Science of Information}, volume 16035 of \emph{Lecture Notes in Computer Science}, pp.\  417--427, Cham, 2026. Springer.
\newblock \doi{10.1007/978-3-032-03924-8_43}.

\bibitem[Ay et~al.(2017)Ay, Jost, L{\^e}, and Schwachh{\"o}fer]{AJVS2017}
Ay, N., Jost, J., L{\^e}, H.~V., and Schwachh{\"o}fer, L.
\newblock \emph{Information geometry}, volume~64 of \emph{Ergebnisse der Mathematik und ihrer Grenzgebiete. 3. Folge / A Series of Modern Surveys in Mathematics}.
\newblock Springer, Cham, 2017.
\newblock ISBN 978-3-319-56477-7.
\newblock \doi{10.1007/978-3-319-56478-4}.

\bibitem[Bauer et~al.(2016)Bauer, Bruveris, and Michor]{BBM2016}
Bauer, M., Bruveris, M., and Michor, P.~W.
\newblock Uniqueness of the {F}isher--{R}ao metric on the space of smooth densities.
\newblock \emph{Bulletin of the London Mathematical Society}, 48\penalty0 (3):\penalty0 499--506, 2016.
\newblock \doi{10.1112/blms/bdw020}.

\bibitem[Benamou \& Brenier(2000)Benamou and Brenier]{BB2000}
Benamou, J.-D. and Brenier, Y.
\newblock A computational fluid mechanics solution to the {M}onge-{K}antorovich mass transfer problem.
\newblock \emph{Numerische Mathematik}, 84\penalty0 (3):\penalty0 375--393, 2000.
\newblock \doi{10.1007/s002110050002}.

\bibitem[Bertsekas(2011)]{B2011}
Bertsekas, D.
\newblock \emph{Dynamic programming and optimal control}, volume~II.
\newblock Belmont, MA: Athena Scientific, 3rd edition, 2011.

\bibitem[Borgwardt et~al.(2006)Borgwardt, Gretton, Rasch, Kriegel, Schölkopf, and Smola]{BGRKSS06}
Borgwardt, K.~M., Gretton, A., Rasch, M.~J., Kriegel, H.-P., Schölkopf, B., and Smola, A.~J.
\newblock Integrating structured biological data by {K}ernel {M}aximum {M}ean {D}iscrepancy.
\newblock \emph{Bioinformatics}, 22\penalty0 (14):\penalty0 e49--e57, 07 2006.
\newblock ISSN 1367-4803.
\newblock \doi{10.1093/bioinformatics/btl242}.

\bibitem[Boufad{\`e}ne \& Vialard(2025)Boufad{\`e}ne and Vialard]{BV2025}
Boufad{\`e}ne, S. and Vialard, F.-X.
\newblock On the global convergence of {W}asserstein gradient flow of the {C}oulomb discrepancy.
\newblock \emph{SIAM Journal on Mathematical Analysis}, 57\penalty0 (4):\penalty0 4556--4587, 2025.
\newblock \doi{10.1137/24M1647631}.

\bibitem[Carrillo et~al.(2010)Carrillo, Lisini, Savar{\'e}, and Slep{\v{c}}ev]{CLSS2010}
Carrillo, J.~A., Lisini, S., Savar{\'e}, G., and Slep{\v{c}}ev, D.
\newblock Nonlinear mobility continuity equations and generalized displacement convexity.
\newblock \emph{Journal of Functional Analysis}, 258\penalty0 (4):\penalty0 1273--1309, 2010.
\newblock \doi{10.1016/j.jfa.2009.10.016}.

\bibitem[Chizat \& Bach(2020)Chizat and Bach]{CB2020}
Chizat, L. and Bach, F.
\newblock Implicit bias of gradient descent for wide two-layer neural networks trained with the logistic loss.
\newblock In \emph{Conference on Learning Theory}, pp.\  1305--1338. PMLR, 2020.
\newblock \doi{10.48550/arXiv.2002.04486}.

\bibitem[Chizat et~al.(2026)Chizat, Colombo, Colombo, and Fern{\'a}ndez-Real]{CCCF2026}
Chizat, L., Colombo, M., Colombo, R., and Fern{\'a}ndez-Real, X.
\newblock Quantitative convergence of {W}asserstein gradient flows of kernel mean discrepancies.
\newblock arXiv preprint arXiv:2603.01977, 2026.

\bibitem[Datar \& Ay(2026{\natexlab{a}})Datar and Ay]{DA2025}
Datar, A. and Ay, N.
\newblock Wasserstein {KL}-divergence for {G}aussian distributions.
\newblock In Nielsen, F. and Barbaresco, F. (eds.), \emph{Geometric Science of Information}, pp.\  91--101, Cham, 2026{\natexlab{a}}. Springer Nature Switzerland.
\newblock \doi{10.1007/978-3-032-03921-7_10}.

\bibitem[Datar \& Ay(2026{\natexlab{b}})Datar and Ay]{DA2026}
Datar, A. and Ay, N.
\newblock Wasserstein {KL}-divergence for {G}aussian distributions, 2026{\natexlab{b}}.
\newblock URL \url{https://arxiv.org/abs/2503.24022}.

\bibitem[Dolbeault et~al.(2009)Dolbeault, Nazaret, and Savar{\'e}]{DNS2009}
Dolbeault, J., Nazaret, B., and Savar{\'e}, G.
\newblock A new class of transport distances between measures.
\newblock \emph{Calculus of Variations and Partial Differential Equations}, 34\penalty0 (2):\penalty0 193--231, 2009.
\newblock \doi{10.1007/s00526-008-0182-5}.

\bibitem[Duncan et~al.(2023)Duncan, Nüsken, and Szpruch]{DNS2023}
Duncan, A., Nüsken, N., and Szpruch, L.
\newblock On the geometry of {S}tein variational gradient descent.
\newblock \emph{Journal of Machine Learning Research}, 24\penalty0 (56):\penalty0 1--39, 2023.
\newblock URL \url{http://jmlr.org/papers/v24/20-602.html}.

\bibitem[Duong et~al.(2025)Duong, Rux, Stein, and Steidl]{DRSS2025}
Duong, R., Rux, N., Stein, V., and Steidl, G.
\newblock Wasserstein gradient flows of maximum mean discrepancy functionals with distance kernels under {S}obolev regularization.
\newblock \emph{Philosophical Transactions. Series A, Mathematical, Physical, and Engineering Sciences}, 383\penalty0 (2298):\penalty0 20240243, 2025.
\newblock \doi{10.1098/rsta.2024.0243}.

\bibitem[Duong et~al.(2026)Duong, Stein, Beinert, Hertrich, and Steidl]{DSBHS2024}
Duong, R., Stein, V., Beinert, R., Hertrich, J., and Steidl, G.
\newblock Wasserstein gradient flows of {MMD} functionals with distance kernel and {C}auchy problems on quantile functions.
\newblock \emph{ESAIM: Control, Optimisation and Calculus of Variations}, 32:\penalty0 10, 2026.
\newblock \doi{10.1051/cocv/2025097}.

\bibitem[Dvijotham \& Todorov(2010)Dvijotham and Todorov]{dvijotham2010inverse}
Dvijotham, K. and Todorov, E.
\newblock Inverse optimal control with linearly-solvable {MDP}s.
\newblock In \emph{Proceedings of the 27th International Conference on Machine Learning}, pp.\  335--342, 2010.
\newblock \doi{10.5555/3104322.3104366}.

\bibitem[Felice \& Ay(2021)Felice and Ay]{FA2021}
Felice, D. and Ay, N.
\newblock Towards a canonical divergence within information geometry.
\newblock \emph{Information geometry}, 4\penalty0 (1):\penalty0 65--130, 2021.
\newblock \doi{10.1007/s41884-021-00047-5}.

\bibitem[Frahm et~al.(2003)Frahm, Junker, and Szimayer]{FJS2003}
Frahm, G., Junker, M., and Szimayer, A.
\newblock Elliptical copulas: applicability and limitations.
\newblock \emph{Statistics \& Probability Letters}, 63\penalty0 (3):\penalty0 275--286, 2003.
\newblock \doi{10.1016/S0167-7152(03)00092-0}.

\bibitem[Friedrich(1991)]{F1991}
Friedrich, T.
\newblock Die {F}isher-{I}nformation und symplektische {S}trukturen.
\newblock \emph{Mathematische Nachrichten}, 153\penalty0 (1):\penalty0 273--296, 1991.
\newblock \doi{10.1002/mana.19911530125}.

\bibitem[Garbuno-Inigo et~al.(2020)Garbuno-Inigo, Hoffmann, Li, and Stuart]{GHLS2020}
Garbuno-Inigo, A., Hoffmann, F., Li, W., and Stuart, A.~M.
\newblock Interacting {L}angevin diffusions: {G}radient structure and ensemble {K}alman sampler.
\newblock \emph{SIAM Journal on Applied Dynamical Systems}, 19\penalty0 (1):\penalty0 412--441, 2020.
\newblock \doi{10.1137/19M1251655}.

\bibitem[Glaser et~al.(2021)Glaser, Arbel, and Gretton]{GAG2021}
Glaser, P., Arbel, M., and Gretton, A.
\newblock {KALE} flow: A relaxed {KL} gradient flow for probabilities with disjoint support.
\newblock In \emph{Advances in Neural Information Processing Systems}, volume~34, pp.\  8018--8031, Virtual event, 6--14 Dec 2021.
\newblock \doi{10.48550/arXiv.2106.08929}.

\bibitem[Gretton et~al.(2012)Gretton, Borgwardt, Rasch, Sch{{\"o}}lkopf, and Smola]{GBRSS13}
Gretton, A., Borgwardt, K.~M., Rasch, M.~J., Sch{{\"o}}lkopf, B., and Smola, A.
\newblock A kernel two-sample test.
\newblock \emph{Journal of Machine Learning Research}, 13\penalty0 (25):\penalty0 723--773, 2012.
\newblock URL \url{http://jmlr.org/papers/v13/gretton12a.html}.

\bibitem[Guan et~al.(2014)Guan, Raginsky, and Willett]{guan2014online}
Guan, P., Raginsky, M., and Willett, R.~M.
\newblock Online {M}arkov decision processes with {K}ullback--{L}eibler control cost.
\newblock \emph{IEEE Transactions on Automatic Control}, 59\penalty0 (6):\penalty0 1423--1438, 2014.
\newblock \doi{10.1109/TAC.2014.2301558}.

\bibitem[Hardion \& Lavenant(2025)Hardion and Lavenant]{HL2026}
Hardion, M. and Lavenant, H.
\newblock Gradient flows of potential energies in the geometry of {S}inkhorn divergences.
\newblock arXiv preprint arXiv:2511.14278, 2025.

\bibitem[Hashizume et~al.(2024)Hashizume, Oishi, and Kashima]{HOK2024}
Hashizume, Y., Oishi, K., and Kashima, K.
\newblock Tsallis entropy regularization for linearly solvable {MDP} and linear quadratic regulator.
\newblock \emph{arXiv preprint arXiv:2403.01805}, 2024.
\newblock \doi{10.48550/arXiv.2403.01805}.

\bibitem[Kappen(2005)]{K2005}
Kappen, H.~J.
\newblock Path integrals and symmetry breaking for optimal control theory.
\newblock \emph{Journal of Statistical Mechanics: Theory and Experiment}, 2005\penalty0 (11):\penalty0 P11011, 2005.
\newblock \doi{10.1088/1742-5468/2005/11/P11011}.

\bibitem[Ku{\v{c}}era(1973)]{kuvcera1973review}
Ku{\v{c}}era, V.
\newblock A review of the matrix {R}iccati equation.
\newblock \emph{Kybernetika}, 9\penalty0 (1):\penalty0 42--61, 1973.

\bibitem[Kullback \& Leibler(1951)Kullback and Leibler]{KL1951}
Kullback, S. and Leibler, R.~A.
\newblock On information and sufficiency.
\newblock \emph{The Annals of Mathematical Statistics}, 22\penalty0 (1):\penalty0 79--86, 1951.
\newblock \doi{10.1214/aoms/1177729694}.

\bibitem[Lafferty(1988)]{L1988}
Lafferty, J.~D.
\newblock The density manifold and configuration space quantization.
\newblock \emph{Transactions of the American Mathematical Society}, 305\penalty0 (2):\penalty0 699--741, 1988.
\newblock \doi{10.1090/S0002-9947-1988-0924776-9}.

\bibitem[Li(2026)]{L2025}
Li, W.
\newblock Geometric calculations on density manifolds from reciprocal relations in hydrodynamics.
\newblock \emph{Journal of Geometry and Physics}, pp.\  105805, 2026.
\newblock \doi{10.1016/j.geomphys.2026.105805}.

\bibitem[Liero \& Mielke(2013)Liero and Mielke]{LM2013}
Liero, M. and Mielke, A.
\newblock Gradient structures and geodesic convexity for reaction--diffusion systems.
\newblock \emph{Philosophical Transactions of the Royal Society A: Mathematical, Physical and Engineering Sciences}, 371\penalty0 (2005):\penalty0 20120346, 2013.
\newblock \doi{10.1098/rsta.2012.0346}.

\bibitem[Liu(2017)]{L2017}
Liu, Q.
\newblock Stein variational gradient descent as gradient flow.
\newblock \emph{Advances in Neural Information Processing Systems}, 30, 2017.
\newblock \doi{10.48550/arXiv.1704.07520}.

\bibitem[Liu \& Wang(2016)Liu and Wang]{LW2016}
Liu, Q. and Wang, D.
\newblock Stein variational gradient descent: A general purpose {B}ayesian inference algorithm.
\newblock \emph{Advances in Neural Information Processing Systems}, 29, 2016.
\newblock \doi{10.48550/arXiv.1608.04471}.

\bibitem[Liu et~al.(2024)Liu, Ghosal, Balasubramanian, and Pillai]{LGBP2024}
Liu, T., Ghosal, P., Balasubramanian, K., and Pillai, N.
\newblock Towards understanding the dynamics of {G}aussian-{S}tein variational gradient descent.
\newblock \emph{Advances in Neural Information Processing Systems}, 36, 2024.
\newblock \doi{10.52202/075280-2676}.

\bibitem[Lott(2008)]{L2008}
Lott, J.
\newblock Some geometric calculations on {W}asserstein space.
\newblock \emph{Communications in Mathematical Physics}, 277\penalty0 (2):\penalty0 423--437, 2008.
\newblock \doi{10.1007/s00220-007-0367-3}.

\bibitem[Nielsen \& Okamura(2024)Nielsen and Okamura]{NO2024}
Nielsen, F. and Okamura, K.
\newblock On the $f$-divergences between densities of a multivariate location or scale family.
\newblock \emph{Statistics and Computing}, 34\penalty0 (1), jan 2024.
\newblock ISSN 0960-3174.
\newblock \doi{10.1007/s11222-023-10373-6}.

\bibitem[Nüsken \& Renger(2023)Nüsken and Renger]{NR2023}
Nüsken, N. and Renger, D.
\newblock Stein variational gradient descent: Many-particle and long-time asymptotics.
\newblock \emph{Foundations of Data Science}, 5\penalty0 (3):\penalty0 286--320, 2023.
\newblock \doi{10.3934/fods.2022023}.

\bibitem[Otto(2001)]{O2001}
Otto, F.
\newblock The geometry of dissipative evolution equations: the porous medium equation.
\newblock \emph{Communications in Partial Differential Equations}, 26:\penalty0 101--174, 2001.
\newblock \doi{10.1081/PDE-100002243}.

\bibitem[Patil et~al.(2024)Patil, Hanasusanto, and Tanaka]{P2024}
Patil, A., Hanasusanto, G.~A., and Tanaka, T.
\newblock Discrete-time stochastic {LQR} via path integral control and its sample complexity analysis.
\newblock \emph{IEEE Control Systems Letters}, 8:\penalty0 1595--1600, 2024.
\newblock \doi{10.1109/LCSYS.2024.3413869}.

\bibitem[Pennec(2006)]{P2006}
Pennec, X.
\newblock Intrinsic statistics on {R}iemannian manifolds: {B}asic tools for geometric measurements.
\newblock \emph{Journal of Mathematical Imaging and Vision}, 25\penalty0 (1):\penalty0 127--154, 2006.
\newblock \doi{10.1007/s10851-006-6228-4}.

\bibitem[Peters et~al.(2010)Peters, Mulling, and Altun]{peters2010relative}
Peters, J., Mulling, K., and Altun, Y.
\newblock Relative entropy policy search.
\newblock In \emph{Proceedings of the AAAI Conference on Artificial Intelligence}, volume~24, pp.\  1607--1612, 2010.
\newblock \doi{10.1609/aaai.v24i1.7727}.

\bibitem[Peyre(2018)]{P2018}
Peyre, R.
\newblock Comparison between ${W_2}$ distance and $\dot{H}^{-1}$ norm, and localization of {W}asserstein distance.
\newblock \emph{ESAIM: Control, Optimisation and Calculus of Variations}, 24\penalty0 (4):\penalty0 1489--1501, 2018.
\newblock \doi{10.1051/cocv/2017050}.

\bibitem[Recht(2019)]{recht2019tour}
Recht, B.
\newblock A tour of reinforcement learning: The view from continuous control.
\newblock \emph{Annual Review of Control, Robotics, and Autonomous Systems}, 2\penalty0 (1):\penalty0 253--279, 2019.
\newblock \doi{10.1146/annurev-control-053018-023825}.

\bibitem[Rudner et~al.(2021)Rudner, Lu, Osborne, Gal, and Teh]{RLOGT2021}
Rudner, T. G.~J., Lu, C., Osborne, M.~A., Gal, Y., and Teh, Y.~W.
\newblock On pathologies in {KL}-regularized reinforcement learning from expert demonstrations.
\newblock In \emph{Advances in Neural Information Processing Systems}, volume~34, pp.\  28376--28389, 2021.
\newblock \doi{10.48550/arXiv.2212.13936}.

\bibitem[Schmidt(2002)]{S2002}
Schmidt, R.
\newblock Tail dependence for elliptically contoured distributions.
\newblock \emph{Mathematical Methods of Operations Research}, 55\penalty0 (2):\penalty0 301--327, 2002.
\newblock \doi{10.1007/s001860200191}.

\bibitem[Schulman et~al.(2015)Schulman, Levine, Abbeel, Jordan, and Moritz]{SLAJM2015}
Schulman, J., Levine, S., Abbeel, P., Jordan, M., and Moritz, P.
\newblock Trust region policy optimization.
\newblock In \emph{International Conference on Machine Learning}, pp.\  1889--1897. PMLR, 2015.
\newblock \doi{10.48550/arXiv.1502.05477}.

\bibitem[Stein \& Li(2025)Stein and Li]{SL2025}
Stein, V. and Li, W.
\newblock Towards understanding accelerated {S}tein variational gradient flow -- analysis of generalized bilinear kernels for {G}aussian target distributions.
\newblock arXiv preprint arXiv:2509.04008, 2025.

\bibitem[Stein \& Li(2026)Stein and Li]{SL2026}
Stein, V. and Li, W.
\newblock Accelerated {S}tein variational gradient flow.
\newblock In Nielsen, F. and Barbaresco, F. (eds.), \emph{Geometric Science of Information. 7th International Conference, GSI 2025, Saint-Malo, France, October 29–31, 2025, Proceedings}, volume 16033 of \emph{Lecture Notes in Computer Science}, pp.\  80--90, Cham, 2026. Springer Nature Switzerland.
\newblock \doi{10.1007/978-3-032-03921-7_9}.

\bibitem[Stein et~al.(2026{\natexlab{a}})Stein, Li, and Steidl]{SLS2026}
Stein, V., Li, W., and Steidl, G.
\newblock Sympformer: Accelerated attention blocks via inertial dynamics on density manifolds.
\newblock arXiv preprint arXiv:2603.16535, 2026{\natexlab{a}}.

\bibitem[Stein et~al.(2026{\natexlab{b}})Stein, Neumayer, Rux, and Steidl]{SNRS2025}
Stein, V., Neumayer, S., Rux, N., and Steidl, G.
\newblock Wasserstein gradient flows for {M}oreau envelopes of $f$-divergences in reproducing kernel {H}ilbert spaces.
\newblock \emph{Analysis and Applications}, 24\penalty0 (01):\penalty0 21--65, 2026{\natexlab{b}}.
\newblock \doi{10.1142/S0219530525500162}.

\bibitem[Sun(1998)]{S1998}
Sun, J.
\newblock Sensitivity analysis of the discrete-time algebraic {R}iccati equation.
\newblock \emph{Linear Algebra and its Applications}, 275:\penalty0 595--615, 1998.
\newblock \doi{10.1016/S0024-3795(97)10017-9}.

\bibitem[Takeda \& Sakajo(2024)Takeda and Sakajo]{TS2024}
Takeda, K. and Sakajo, T.
\newblock Uniform error bounds of the ensemble transform {K}alman filter for chaotic dynamics with multiplicative covariance inflation.
\newblock \emph{SIAM/ASA Journal on Uncertainty Quantification}, 12\penalty0 (4):\penalty0 1315--1335, 2024.
\newblock \doi{10.1137/24M1637192}.

\bibitem[Theodorou et~al.(2010)Theodorou, Buchli, and Schaal]{theodorou2010learning}
Theodorou, E., Buchli, J., and Schaal, S.
\newblock Learning policy improvements with path integrals.
\newblock In \emph{Proceedings of the Thirteenth International Conference on Artificial Intelligence and Statistics}, pp.\  828--835. JMLR Workshop and Conference Proceedings, 2010.

\bibitem[Todorov(2006)]{T2006}
Todorov, E.
\newblock Linearly-solvable {M}arkov decision problems.
\newblock \emph{Advances in Neural Information Processing Systems}, 19, 2006.
\newblock \doi{10.7551/mitpress/7503.003.0176}.

\bibitem[Todorov(2009)]{todorov2009efficient}
Todorov, E.
\newblock Efficient computation of optimal actions.
\newblock \emph{Proceedings of the National Academy of Sciences}, 106\penalty0 (28):\penalty0 11478--11483, 2009.
\newblock \doi{10.1073/pnas.0710743106}.

\bibitem[Villani(2003)]{V2003}
Villani, C.
\newblock \emph{Topics in optimal transportation}, volume~58 of \emph{Graduate Studies in Mathematics}.
\newblock American Mathematical Society, Providence, RI, 2003.
\newblock ISBN 0-8218-3312-X.
\newblock \doi{10.1090/gsm/058}.

\bibitem[Villani(2009)]{V2008}
Villani, C.
\newblock \emph{Optimal transport: old and new}, volume 338 of \emph{Grundlehren der mathematischen Wissenschaften}.
\newblock Springer, Berlin, Heidelberg, 1 edition, 2009.
\newblock ISBN 978-3-540-71049-3.
\newblock \doi{10.1007/978-3-540-71050-9}.

\bibitem[Wang \& Li(2022)Wang and Li]{WL2022}
Wang, Y. and Li, W.
\newblock Accelerated information gradient flow.
\newblock \emph{Journal of Scientific Computing}, 90\penalty0 (1):\penalty0 11, 2022.
\newblock \doi{10.1007/s10915-021-01709-3}.

\bibitem[Xu et~al.(2022)Xu, Korba, and Slepcev]{XKS2022}
Xu, L., Korba, A., and Slepcev, D.
\newblock Accurate quantization of measures via interacting particle-based optimization.
\newblock In \emph{International Conference on Machine Learning}, pp.\  24576--24595. PMLR, 2022.

\end{thebibliography}
